\newcounter{enumitemp}
\newenvironment{enumeratecontinue}{
 \setcounter{enumitemp}{\value{enumi}}
 \begin{enumerate}
 \setcounter{enumi}{\value{enumitemp}}
}
{
 \end{enumerate}
}
\newcommand\pref[1]{(\ref{#1})}
\newtheorem{thm}{Theorem}[section]
\newtheorem{theorem}[thm]{Theorem}
\newtheorem{lemma}[thm]{Lemma}
\newtheorem*{lemma*}{Lemma}
\newtheorem{cor}[thm]{Corollary}
\newtheorem{corollary}[thm]{Corollary}
\newtheorem{proposition}[thm]{Proposition}
\newtheorem*{proposition*}{Proposition}
\newtheorem{prop}[thm]{Proposition}
\newtheorem{fact}[thm]{Fact}
\theoremstyle{definition}
\newtheorem{definition}[thm]{Definition} 
\newtheorem*{defn*}{Definition}
\newtheorem{notn}[thm]{Notation}
\newtheorem{ex}[thm]{Example}
\newtheorem{remark}[thm]{Remark}
\theoremstyle{remark}
\newcounter{remarks}
{\paragraph*{Remarks}\smallskip
 \begin{list}{\arabic{remarks}. }{\usecounter{remarks}%
 \setlength{\leftmargin}{0in}%
 \setlength{\rightmargin}{0in}%
 \setlength{\labelsep}{0pt}%
 \setlength{\labelwidth}{0pt}%
 \setlength{\listparindent}{0pt}%
 }
}
{
\end{list}
}
\newcommand\from:
\newcommand\inv{{-1}}
\newcommand\subgroup{<}
\newcommand\infinity\infty
\newcommand\na{\text{na}}
\newcommand\supp{\text{supp}}
\newcommand\disjunion\coprod
\newcommand\act\curvearrowright
\DeclareMathOperator{\Fix}{Fix}
\DeclareMathOperator{\Per}{Per}
\DeclareMathOperator{\PF}{PF}
\DeclareMathOperator{\cl}{cl}
\DeclareMathOperator\image{Image}
\DeclareMathOperator\core{core}
\DeclareMathOperator\Ker{Ker}
\DeclareMathOperator\Fr{Fr}
\newcommand{\R}{{\mathbb R}}
\newcommand\reals{\R}
\newcommand{\Z}{{\mathbb Z}}
\newcommand{\C}{{\mathcal C}}
\newcommand{\T}{{\mathbb T}}
\def\H{{\mathbb H}}
\newcommand{\Q}{{\mathbb Q}}
\newcommand{\D}{{\mathcal D}}
\newcommand{\E}{{\mathcal E}}
\newcommand{\V}{{\mathcal V}}
\newcommand{\W}{{\mathcal W}
\newcommand{\M}{\mathcal M}}
\newcommand{\K}{{\mathcal K}}
\DeclareMathOperator{\Out}{\mathsf{Out}}
\DeclareMathOperator{\Aut}{\mathsf{Aut}}
\DeclareMathOperator{\mcg}{\mathsf{MCG}}
\DeclareMathOperator\MCG{\mcg}
\DeclareMathOperator{\Stab}{\mathsf{Stab}}
\newcommand{\ffs}{free factor system}
\newcommand{\pg}{PG}
\newcommand{\upg}{UPG}
\newcommand{\F}{\mathcal F}
\renewcommand\L{\mathcal L}
\def\B{\mathcal B}
\newcommand{\A}{\mathcal A}
\renewcommand\T{\mathcal T}
\newcommand\cQ{\mathcal Q}
\newcommand{\fG} {f : G \to G}
\newcommand{\ti} {\tilde}
\newcommand{\iNp} {indivisible Nielsen path}
\newcommand{\filt}{\emptyset = G_0 \subset G_1 \subset \ldots \subset G_N = G}
\newcommand{\eg}{EG}
\newcommand{\noneg}{NEG}
\renewcommand\neg\noneg
\newcommand{\wt}{\widetilde}
\newcommand\agen{{\text{ag}}}
\newcommand{\ct}{CT}
\newcommand{\cts}{CTs}
\newcommand{\comment}[1]{}
\newcommand\BH{\cite{BestvinaHandel:tt}}
\newcommand\BookZero{\cite{BFH:laminations}}
\newcommand\BookOne{\cite{BFH:TitsOne}}
\newcommand\BookTwo{\cite{BFH:TitsTwo}}
\newcommand\BookThree{\cite{BFH:Solvable}}
\newcommand\recognition{\cite{FeighnHandel:recognition}}
\newcommand\SubgroupsZero{\cite{HandelMosher:SubgroupsIntro}}
\newcommand\SubgroupsOne{\cite{HandelMosher:SubgroupsI}}
\newcommand\SubgroupsThree{\cite{HandelMosher:SubgroupsIII}}
\newcommand\SubgroupsFour{\cite{HandelMosher:SubgroupsIV}}
\newcommand\FSHyp{\cite{HandelMosher:FreeSplittingHyperbolic}}
\DeclareMathOperator\interior{int}
\newcommand\bdy\partial
\newcommand\intersect\cap
\newcommand\union\cup
\newcommand\<\langle
\renewcommand\>\rangle
\newcommand\meet\wedge
\newcommand\composed{\circ}
\newcommand\cross\times
\newcommand\restrict{\bigm |}
\newcommand\inject\hookrightarrow
\newcommand\abs[1]{\left|#1\right|}
\DeclareMathOperator\rank{rank}
 \newcommand\surjection\twoheadrightarrow
\newcommand\suchthat{\bigm|}
\DeclareMathOperator\hs{{\cal L}^h}
\DeclareMathOperator\IA{IA}
\newcommand\IAThree{\IA_n(\Z/3)}
\newcommand\fscnp{{\cal FS'}(F_n)}
\newcommand\cH{{\cal H}}
\DeclareMathOperator\FS{{\cal FS}}
\newcommand\fscn{{\cal FS}(F_n)}
\DeclareMathOperator\FF{{\cal FF}}
\newcommand\low{-}
\DeclareMathOperator\closure{cl}
 \title{The free splitting complex of a free group II: \\ Loxodromic outer automorphisms}
\author{Michael Handel and Lee Mosher}
\begin{document}

\maketitle
\section{Introduction}

Consider a group $G$ acting by isometries on a Gromov hyperbolic metric space~$X$. An element $g \in G$ is \emph{loxodromic} if for some (any) $x \in X$ the orbit map $n \mapsto g^n \cdot x$ is a quasi-isometric embedding $\Z \mapsto X$. The terminology comes from the case of hyperbolic 3-space where such an isometry leaves invariant a ``loxodromic curve'' on the 2-sphere at infinity. The action of a loxodromic $g$ on the the Gromov closure $\overline X = X \union \bdy X$ is a north--south action with attracting--repelling fixed point pair $\bdy_\pm g = \lim_{n \to \pm\infinity} g^n \cdot x$. Two loxodromic elements $g,g' \in G$ are said to be \emph{coaxial} if the unordered fixed point pairs $\{\bdy_\pm g\}$, $\{\bdy_\pm g'\}$ are equal, and \emph{independent} if those pairs are disjoint. Understanding loxodromic behavior is important, for example, in proving the Tits alternative by ``hyperbolic ping-pong'' arguments, and for studying second bounded cohomology (see e.g.\ \cite{BestvinaFujiwara:bounded}).

Before stating our results, here are some examples. A Gromov hyperbolic group acts on its Cayley graph, each element is either finite order or loxodromic, any two loxodromic elements are either co-axial or independent, and if $g \in G$ is loxodromic then one has equality of stabilizer subgroups $\Stab(\bdy_- g) = \Stab(\bdy_+ g) \equiv \Stab(\bdy_\pm g)$ and this subgroup is virtually cyclic \cite{Gromov:hyperbolic}.

The mapping class group $\mcg(S)$ of a finite type surface $S$ acts on its curve complex~$\C(S)$, hyperbolicity of which was proved by Masur and Minsky \cite{MasurMinsky:complex1}. A mapping class $\phi \in \mcg(S)$ acts loxodromically on $\C(S)$ if and only if $\phi$ is pseudo-Anosov, which occurs if and only if $\phi$ has infinite order and does not preserve any simplex of $\C(S)$. Two loxodromics are either co-axial or independent, and for a single loxodromic $\phi$ the subgroup $\Stab(\bdy_\pm \phi)$ is virtually cyclic. These properties are proved first on the level of the stable and unstable lamination pair $\Lambda^s_\phi, \Lambda^u_\phi$, and are then transferred to $\C(S)$ by showing that for pseudo-Anosov $\phi,\psi \in \mcg(S)$ one has $\bdy_+\phi = \bdy_+ \psi$ if and only if $\Lambda^u_\phi = \Lambda^u_{\psi}$. 

$\Out(F_n)$ acts on the free factor complex $\FF(F_n)$, hyperbolicity of which was proved by Bestvina and Feighn \cite{BestvinaFeighn:FFCHyp}, and Theorem 9.3 of that paper proves that $\phi \in \Out(F_n)$ acts loxodromically on $\FF(F_n)$ if and only if $\phi$ is fully irreducible (and see Remark~\ref{RemarkFSLox}), which occurs if and only $\phi$ has no periodic simplices in $\FF(F_n)$. Also, two fully irreducibles are either co-axial or independent, and $\Stab(\bdy_\pm \phi)$ is virtually cyclic when $\phi$ is fully irreducible. Again these properties are related to attracting/repelling lamination pairs: the corresponding properties for laminations were proved in \BookZero; and from \cite{BestvinaFeighn:FFCHyp} it follows that two fully irreducibles are co-axial on $\FF(F_n)$ if and only if they have the same lamination pair.

\smallskip\noindent\textbf{Overview of results.} $\Out(F_n)$ acts naturally from the right on the free splitting complex $\FS(F_n)$, hyperbolicity of which was proved in Part~I of this work \cite{HandelMosher:FreeSplittingHyperbolic}. 
Here we study the loxodromic elements for this action, characterizing loxodromic behavior in terms of attracting/repelling laminations and similarly characterizing elements with bounded orbits and with a periodic point (Theorem~\ref{trichotomy}). We also prove the same ``co-axial versus independent'' dichotomy as in all of the above examples (Theorem~\ref{disjoint axes}). But there are some interesting features of this study which depart from the above examples. One feature (Theorem~\ref{trichotomy}) is that there are many more loxodromics acting on $\FS(F_n)$ than on $\FF(F_n)$. In mapping class groups, for $\phi \in \mcg(S)$ to be pseudo-Anosov there are two equivalent formulations: $\phi$ has a stable/unstable lamination pair that fills the surface; and $\phi$ has irreducible powers. This equivalence breaks down in $\Out(F_n)$, yielding two different meanings for ``loxodromic'': $\phi$ is loxodromic on $\FS(F_n)$ if and only if $\phi$ has a filling lamination pair; whereas $\phi$ is loxodromic on $\FF(F_n)$ if and only if it is fully irreducible, a strictly stronger condition. Another feature (Theorem~\ref{ThmFinitelyGenerated}) is that when $\phi \in \Out(F_n)$ acts loxodromically on $\FS(F_n)$, the subgroup $\Stab(\bdy_\pm\phi)$ need not be virtually cyclic: it can contain a higher rank abelian subgroup of linearly growing outer automorphisms; it can also map homomorphically onto to a surface mapping class group; in general $\Stab(\bdy_\pm\phi)$ is a mixture of these two behaviors.

\smallskip\noindent\textbf{Statements of results.} See Section~\ref{SectionBackground} for a brief review of attracting/repelling lamination pairs, of free factor systems and free factor supports, and of reducible, irreducible, and fully irreducible outer automorphisms. We let $\L(\phi)$ denote the set of attracting laminations of~$\phi$. The laminations of $\L(\phi)$ and $\L(\phi^\inv)$ come in pairs $\Lambda^+_\phi, \Lambda^-_\phi$ defined by requiring that they have the same free factor support, and if the support is not a proper free factor system then we say that the pair $\Lambda^\pm_\phi$ \emph{fills} $F_n$. The notation $\Lambda^\pm_\phi = (\Lambda^+_\phi,\Lambda^-_\phi)$ denotes the ordered pair, and $\{\Lambda^\pm_\phi\} = \{\Lambda^+_\phi,\Lambda^-_\phi\}$ denotes the unordered pair. As mentioned above, if $\phi$ is fully irreducible then it has a filling lamination pair $\Lambda^\pm_\phi$ (the unique element of $\L^\pm(\phi)$), but the converse is not true in general.

The following ``trichotomy theorem'' characterizes which elements are loxodromic, which have bounded orbits and which have a periodic point.

\begin{theorem} \label{trichotomy} The following holds for all $\phi \in \Out(F_n)$.
\begin{enumerate}
\item \label{item:loxodromic} The action of $\phi$ on $\fscn$ is loxodromic if and only if some element of $\L(\phi)$ fills. 
\item \label{item:bounded} If the action of $\phi$ on $\fscn$ is not loxodromic then the action has bounded orbits. 
\item \label{item:periodic vertex}The action of $\phi$ on $\fscn$ has a periodic point (in fact a periodic vertex) if and only if the full set of attracting laminations $\L(\phi)$ does not fill.
\end{enumerate}
\end{theorem}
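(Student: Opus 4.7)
The plan is to treat the three items together, combining the structure theory of CT representatives of $\phi$ with the hyperbolicity of $\FS(F_n)$ established in Part~I. The unifying observation is that periodic vertices of $\FS(F_n)$ correspond, up to collapsing edges and passing to a power, to $\phi$-periodic proper free factor systems of $F_n$, while attracting laminations and their free factor supports are cleanly recorded by the filtration of a CT representative.

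I would begin with item \itemref{item:periodic vertex}. For the easier direction, if $\L(\phi)$ does not fill then its free factor support $\F$ is a proper $\phi$-periodic free factor system; realizing $\F$ as the non-free vertex groups of a Grushko-type multi-edge free splitting of $F_n$ and collapsing all but one appropriately chosen edge orbit produces a $\phi$-periodic vertex of $\FS(F_n)$. For the reverse direction, a $\phi^k$-fixed vertex $v$ is a one-edge splitting whose vertex groups form a proper $\phi^k$-invariant free factor system $\F_v$; since $\phi^k$ preserves each vertex group up to conjugation it restricts to outer automorphisms of each, so every $\Lambda \in \L(\phi) = \L(\phi^k)$ is carried by $\F_v$, whence $\L(\phi)$ does not fill.

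For the reverse direction of item \itemref{item:loxodromic}, assume some $\Lambda^+_\phi \in \L(\phi)$ fills. Take a CT representative $f\from G \to G$ of a power $\phi^k$ in which the EG stratum carrying $\Lambda^+_\phi$ sits at the top of the filtration (available because of filling). I would construct a periodic sequence of free splittings from the iterates of $f$ on a marked graph; this sequence is $\phi^k$-equivariantly shifted by a fixed combinatorial amount, giving a candidate quasi-axis in $\FS(F_n)$. Using the projection machinery developed in Part~I, one verifies that this fold sequence is a quasi-geodesic in $\FS(F_n)$ of positive translation length under $\phi^k$, so $\phi^k$ and hence $\phi$ act loxodromically. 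The filling hypothesis is essential: without it, the fold sequence would remain inside a subcomplex supported on a proper free factor system, and translation distance in $\FS(F_n)$ would not accumulate. I expect this quasi-axis construction to be the main obstacle.

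The forward direction of item \itemref{item:loxodromic} together with all of item \itemref{item:bounded} are handled by contrapositive. If $\phi$ is not loxodromic then no $\Lambda \in \L(\phi)$ fills, by the reverse direction of item \itemref{item:loxodromic}. A CT argument then produces a proper $\phi$-periodic free factor system carrying all of $\L(\phi)$---for instance, in any CT of $\phi^k$ the topmost stratum must be NEG in this case, so the filtration level immediately below the top supplies the required system---whence $\L(\phi)$ itself does not fill, and the reverse direction of item \itemref{item:periodic vertex} provides a periodic vertex. Every $\phi$-orbit in $\FS(F_n)$ is therefore bounded, completing both items; the degenerate case $\L(\phi) = \emptyset$ is covered by standard facts about polynomially growing outer automorphisms.
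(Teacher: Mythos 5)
Your treatment of item \pref{item:bounded} contains the most serious error. You argue: not loxodromic $\Rightarrow$ no single element of $\L(\phi)$ fills $\Rightarrow$ $\L(\phi)$ does not fill $\Rightarrow$ there is a periodic vertex $\Rightarrow$ orbits are bounded. The middle implication is false: $\L(\phi)$ can fill as a whole even though no single element fills, and in that case there is \emph{no} periodic vertex at all (this is exactly Example~\ref{ExampleBddNoPeriodic}), so item \pref{item:bounded} cannot be reduced to item \pref{item:periodic vertex}. Your supporting claim that in this situation the top stratum of a CT must be \neg\ is also false; in Example~\ref{ExampleBddNoPeriodic} the top stratum is \eg\ with a non-filling lamination, and the filtration element below the top does not carry all of $\L(\phi)$. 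This gap is precisely where the paper does substantial work: Lemma~\ref{invariant subgraphs} produces a (non-CT) representative $\fG$ with two proper invariant subgraphs $K_1,K_2$ covering $G$, and the proof of Lemma~\ref{bounded orbit} then exhibits, for every $k$, a path of length at most four in $\fscnp$ from $\<G,J_3\>$ to $\<G,J_3\>^{f^k}$, giving bounded orbits without any periodic point.

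There are two further gaps. First, the ``if'' direction of item \pref{item:loxodromic} --- the analytic heart of the theorem --- is only a plan in your write-up: you defer the verification that the proposed fold-sequence quasi-axis is a quasi-geodesic to unspecified projection machinery from Part~I, and you acknowledge this is the main obstacle. The paper takes a different and self-contained route: using weak attraction theory it builds an integer-valued function $W$ on one-edge splittings (Lemma~\ref{lem:well}, Definition~\ref{W(F)}) which is Lipschitz along edges of $\fscn$ (Lemma~\ref{adjacent splittings}) and satisfies $W(S^{\phi^m}) = W(S) - m$, so displacement grows linearly and loxodromicity follows with no appeal to hyperbolicity or to Part~I internals. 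Second, in item \pref{item:periodic vertex} your construction of a periodic vertex uses only that the free factor support $\F$ of $\L(\phi)$ is proper and $\phi$-periodic; that is not enough, since a Grushko-type splitting relative to $\F$ is not canonical and $\phi$ need not preserve any choice of it --- indeed Example~\ref{filling reducible} has a proper $\phi$-invariant free factor system yet $\phi$ is loxodromic with no periodic vertex. One must use that $\F$ carries \emph{all} of $\L(\phi)$, which the paper does by choosing a CT (via Theorem 4.28 of \recognition) in which $\L(\phi)$ lies in a proper filtration element, forcing the top stratum to be a single \neg\ edge $E$ with $f(E)=uEv$ and hence yielding an invariant one-edge collapse $\<G,G_{N-1}\>$.
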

\noindent
See Example~\ref{filling reducible} for a reducible $\phi$ that acts loxodromically on $\FS(F_n)$, and see Example~\ref{ExampleBddNoPeriodic} for a $\phi$ with bounded orbits but without periodic points.

\bigskip

Our next theorem describes a natural, equivariant, bijective correspondence between laminations and loxodromic fixed points. The group $\Out(F_n)$ has a natural left action on the set of attracting laminations $\Lambda$; we denote this action by $\theta \cdot \Lambda$ for each $\theta \in \Out(F_n)$. It also has a natural right simplicial action on $\FS(F_n)$ that extends homeomorphically to the points $\beta$ of the Gromov boundary of $\FS(F_n)$; we denote this action by $\beta^\theta$. 
%

\bigskip



\begin{theorem}\label{disjoint axes} 
Given $\phi,\psi \in \Out(F_n)$ and filling lamination pairs $\Lambda^\pm_\phi \in \L^\pm(\phi)$ and $\Lambda^\pm_\psi \in \L^\pm(\psi)$, one of the following holds:
\begin{enumerate}
\item\label{ItemCoAxial}
$\{\Lambda^\pm_\phi\} = \{\Lambda^\pm_\psi \}$ and $\{\bdy_\pm \phi\} = \{\bdy_\pm \psi\}$, and so $\phi,\psi$ are co-axial.
\item\label{ItemIndependent}
$\{\Lambda^\pm_\phi\} \intersect \{ \Lambda^\pm_\psi\} = \emptyset$ and $\{\bdy_\pm \phi\} \intersect \{\bdy_\pm \phi\} = \emptyset$, and so $\phi,\psi$ are independent.
\end{enumerate}
There is an equivariant 1--1 correspondence $\beta \leftrightarrow \Lambda$ between attracting/repelling loxodromic fixed points $\beta \in \bdy\FS(F_n)$ and repelling/attracting [sic] filling laminations $\Lambda$, as follows:
\begin{enumeratecontinue}
\item\label{ItemLamRepCorrespond}
$\beta \leftrightarrow \Lambda$ if and only if there exists a loxodromic $\phi \in \Out(F_n)$ with filling lamination pair $\Lambda^\pm_\phi \in \L^\pm(\phi)$ such that $\beta = \bdy_-\phi$ and $\Lambda = \Lambda^+_\phi$.
\item\label{ItemLamRepEquivariant}
For each $\theta \in \Out(F_n)$ and each corresponding pair $\beta \leftrightarrow \Lambda$ we have $\beta^\theta \leftrightarrow \theta^\inv \!\cdot \!\Lambda$.
\end{enumeratecontinue}
\end{theorem}

\smallskip

As an application we have the following result solely about attracting laminations, not referring to any complexes on which $\Out(F_n)$ acts. 

\begin{cor} \label{inverse lamination} \quad
\begin{enumerate}
\item\label{ItemTwoLamPairs}
For any $\phi,\psi \in \Out(F_n)$ and lamination pairs $\Lambda^\pm_\phi \in \L^\pm(\phi)$ and $\Lambda^\pm_\psi \in \L^\pm(\psi)$, if $\Lambda^+_\phi = \Lambda^+_\psi$ then $\Lambda^-_\phi = \Lambda^-_\psi$. 
\item\label{ItemOneLamPair}
For any $\phi \in \Out(F_n)$ and any lamination pair $\Lambda^\pm_\phi \in \L^\pm(\phi)$ we have \break $\Stab(\Lambda^-_\phi) = \Stab(\Lambda^+_\phi) = \Stab(\Lambda^\pm_\phi)$.
\end{enumerate}
\end{cor}

\begin{proof} To prove~\pref{ItemTwoLamPairs}, assuming $\Lambda^+_\phi = \Lambda^+_\psi$, the pairs $\Lambda^\pm_\phi$, $\Lambda^\pm_\psi$ have the same free factor support $\F = \{[F]\}$ where $F$ is a nontrivial free factor. When $\F$ is not proper, i.e.\ when $F=F_n$ and these pairs fill, the conclusion follows from Theorem~\ref{disjoint axes}~\pref{ItemCoAxial} and~\pref{ItemIndependent}. When $\F$ is proper then the corollary reduces to the filling case by passing to powers of $\phi,\psi$ that fix $\F$ and then replacing $\phi,\psi$ with their restrictions \, $\phi \restrict F$, \, $\psi \restrict F \, \in \, \Out(F)$ \, (\SubgroupsOne, Section~1.1.3).

We now turn to \pref{ItemOneLamPair}. By symmetry we need only prove $\Stab(\Lambda^-_\phi) \subgroup \Stab(\Lambda^+_\phi)$, so consider $\theta \in \Stab(\Lambda^-_\phi)$ and consider the lamination pair $\Lambda^\pm_{\theta\phi\theta^\inv} = \theta(\Lambda^\pm_\phi) \in \L^\pm(\theta\phi\theta^\inv)$. We have $\Lambda^-_{\theta\phi\theta^\inv} = \theta(\Lambda^-_\phi) = \Lambda^-_\phi$, and together with~\pref{ItemTwoLamPairs} it follows that $\theta(\Lambda^+_\phi) = \Lambda^+_{\theta\phi\theta^\inv} = \Lambda^+_\phi$, proving that $\theta \in \Stab(\Lambda^+_\phi)$.
\end{proof}

When $\phi \in \Out(F_n)$ acts loxodromically on $\fscn$ with filling pair $\Lambda^\pm_\phi$, our next theorem describes the stabilizer group $\Stab(\bdy_\pm \phi) = \Stab(\Lambda^\pm_\phi)$ and in particular proves that it is finitely generated. Recall the finite index, torsion free, normal subgroup 
$$\IAThree = \text{kernel}\bigl(\Out(F_n) \to \Aut(H_1(F_n;\Z/3)\bigr)
$$ 
The homomorphism $\PF_{\Lambda^+}$ in the statement of the theorem is the restriction to $\IAThree$ of the expansion factor homomorphism for $\Lambda_\eta^+$ that is defined in Section 3.3 of \BookOne; see Section~\ref{SectionBackground} for a review.


\begin{theorem} \label{ThmFinitelyGenerated} Suppose that $\eta \in \IAThree$ is rotationless, that $\Lambda_\eta^+ \in \L(\phi)$ is filling, and that $K$ is the kernel of $\PF= \PF_{\Lambda^+} : \Stab(\Lambda_\eta^+) \cap \IAThree \to \R$. There exist compact surfaces $S_1,\ldots, S_m$ with nonempty boundary and a homomorphism 
$$\Theta \from K \to \mcg(S_1) \times \ldots \times \mcg(S_m)
$$ 
whose image has finite index, and whose kernel is a finitely generated, abelian group of linearly growing outer automorphisms. In particular, $K$ is finitely generated.
\end{theorem}

Compare \BookZero\ Section~2 where it is shown that if $\phi$ is fully irreducible then the kernel of the expansion factor homomorphism is finite, in which case $K$ is trivial and $\Stab(\Lambda^\pm_\phi)$ is virtually cyclic. See Example~\ref{linear example} in which $\image(\Theta)$ is trivial and $K$ is a rank~$2$ abelian subgroup of linearly growing outer automorphisms. And see Example~\ref{surface example} in which $\Theta$ maps $K$ onto a finite index subgroup of a mapping class group. 

\smallskip\noindent\textbf{Failure of the acylindrical and WPD properties.} By combining Theorem~\ref{ThmFinitelyGenerated} with Examples~\ref{linear example} and~\ref{surface example}, we note that the action of $\Out(F_n)$ on $\FS(F_n)$ is not acylindrical in the sense of Bowditch \cite{Bowditch:tight}, nor does it satisfy the weaker condition that each $\phi \in \Out(F_n)$ acting loxodromically on $\FS(F_n)$ satisfies the WPD property of Bestvina and Fujiwara \cite{BestvinaFujiwara:bounded}, because those conditions imply that $\Stab(\bdy_\pm \phi)$ is virtually cyclic. Note Bestvina and Feighn \cite{BestvinaFeighn:FFCHyp} show that fully irreducible elements acting on $\FF(F_n)$ do satisfy WPD; it remains unknown whether the action of $\Out(F_n)$ on $\FF(F_n)$ is acylindrical.

\smallskip\noindent\textbf{Application to second bounded cohomology.} 
In \cite[Theorem E]{HandelMosher:BddCohomologyI}, the results of the current paper are applied to prove that certain loxodromic elements of $\Out(F_n)$ satisfy the WWPD property of Bestvina, Bromberg, and Fujiwara \cite{BBF:MCGquasitrees}. This is a key step of the proof given in \cite{HandelMosher:BddCohomologyI,HandelMosher:BddCohomologyII} that each finitely generated subgroup of $\Out(F_n)$ is either virtually abelian or has second bounded cohomology of uncountable dimension.

\smallskip\noindent\textbf{The case of rank~$2$.} The results of this paper are trivial in rank~$1$, and in rank~$2$ follow from well known results. The abelianization map $F_2 \mapsto \Z^2$ induces an isomorphism $\Out(F_2) \approx GL_2(\Z)$ (see \cite{Vogtmann:OuterSpaceSurvey} for a reference to Nielsen). The complex $\FS(F_2)$ equivariantly contains the Farey graph $\Gamma$ having vertex set $\Q$ where vertices $\frac{p}{q}, \frac{r}{s}$ are connected by an edge whenever $ps-qr=\pm 1$ (see \cite{CullerVogtmann:RankTwo}). The graph $\Gamma$ is Gromov hyperbolic \cite{Manning:pseudocharacters}. The Gromov boundary of $\Gamma$ has an equivariant bijection with the irrational numbers $\reals-\Q$, the elements of $GL_2(\Z)$ acting loxodromically on $\Gamma$ are exactly the matrices having trace of absolute value $>2$, and these correspond exactly with the exponentially growing elements of $\Out(F_2)$ each of which is fully irreducible and has a filling lamination.

\smallskip\noindent\textbf{Remarks on the proofs.} The table of contents gives a guide to the proofs of Theorems~\ref{trichotomy} and~\ref{ThmFinitelyGenerated}. The proof of Theorem~\ref{disjoint axes} draws on methods from the rest of the paper: one part is proved in Section~\ref{SectionDistinctEnds} using methods from the proof of Theorem~\ref{trichotomy}; and the remainder is proved in Section~\ref{SectionProofCompletion} by applying Theorem~\ref{ThmFinitelyGenerated} and its methods of proof, and by applying the main result from our work \SubgroupsZero\ on decomposition of subgroups of $\Out(F_n)$.

\vfill\break

\setcounter{tocdepth}{2}
\tableofcontents


\section{Background}
\label{SectionBackground}

In this section we set notation and provide references to \BookOne, \recognition\ and \SubgroupsOne\ for readers that want further details.

\paragraph{Marked graphs, paths.} We assume that $F_n$ has been identified with $\pi_1(R_n, *)$ where $R_n$ is the graph with one vertex $*$ and $n$ oriented edges representing a free basis of~$F_n$. A~\emph{marked graph} $G$ is a graph such that each vertex has valence at least two and such that $G$ is equipped with a homotopy equivalence $\rho :R_n \to G$ called the \emph{marking} on $G$. The marking provides an identification of $\pi_1(G)$ with $F_n$ that is well defined up to composition with an inner automorphism. Thus conjugacy classes of elements and of subgroups of $\pi_1(G)$ correspond bijectively to conjugacy classes of elements and of subgroups of $F_n$. A \emph{path} in $G$ is an immersion of a (possibly trivial, infinite or bi-infinite) closed subinterval of $\mathbb{R}$ having endpoints, if any, at vertices. We do not distinguish between paths that differ only by an orientation preserving reparameterization of their domains, and so a path is determined by its associated edge path and we identify a path with its edge path. A general continuous map  $\alpha : [0,1] \to G$ with endpoints at intervals straightens to a unique path denoted $[\alpha]$. A path $\sigma$ is \emph{crossed} by a path $\tau$ if either $\sigma$ or $\bar \sigma$ is a subpath of $\tau$. A \emph{circuit} in $G$ is an immersion of a circle, and various ``path'' terminologies apply as well to circuits. As with paths, circuits which differ by orientation preserving reparameterization of their domain are not distinguished.

A \emph{core subgraph} of $G$ is a subgraph in which each vertex has valence~$\ge 2$. Every subgraph $H \subset G$ contains a unique maximal core subgraph called the \emph{core of $H$}, which equals the union of all circuits in $H$.

\paragraph{Homotopy equivalences and $f_\#$.} Every homotopy equivalence of marked graphs $f \from G \to G'$ is henceforth assumed to map vertices to vertices, and to restrict on each edge of~$G$ to either an immersion or a constant. 

In the case of a self-homotopy equivalence $f \from G \to G$, after choosing base vertex $v \in G$ and a path from $f(v)$ back to $v$, the induced action of $\fG$ on the fundamental group $\pi_1(G,v)$ induces in turn a well defined outer automorphism of $\pi_1(G,v)$ and hence a well defined $\phi \in \Out(F_n)$; we say that $\fG$ represents $\phi$ or is a topological representative of $\phi$. If $\sigma \subset G$ is a finite path (resp.\ circuit) then $f(\sigma)$ is homotopic rel endpoints to a unique path (resp.\ circuit) that we denote $f_\#(\sigma)$. Note that $f_\#$ can be iterated and that $(f_\#)^k = (f^k)_\#$.

\paragraph{Subgroup systems. Carrying and meet ($\sqsubset$ and $\meet$).} The conjugacy class of a finite rank subgroup $A \subgroup F_n$ is denoted $[A]$. If $A_1,\ldots,A_k$ are pairwise nonconjugate, nontrivial, finite rank subgroups then the set $\A = \{[A_1],\ldots,[A_k]\}$ is called a \emph{subgroup system}. Each $[A_i]$ is a \emph{component} of $\A$. If $A_1,\ldots,A_k$ are non-trivial free factors and if $F_n = A_1 \ast \ldots \ast A_k$ or $F_n = A_1 \ast \ldots \ast A_k\ast B$ for some non-trivial free factor $B$ then $\A$ is a \emph{free factor system}. More generally if there exists a minimal $\reals$-tree action $F_n \act T$ with trivial arc stabilizers such that $\A$ is the set of conjugacy classes of nontrivial point stabilizers then $\A$ is a \emph{vertex group system}. Given another subgroup system $\A' = \{[A'_1],\ldots,[A'_l]\}$ we use the notation $\A \sqsubset \A'$ to mean that for each $i \in \{1,\ldots,k\}$ there exists $j \in \{1,\ldots,l\}$ such that $A_i$ is conjugate to a subgroup of~$A'_j$; we refer to this relation by saying that $\A$ is \emph{carried by} or \emph{contained in~$\A'$}, also that $\A'$ is an \emph{extension of~$\A$}, or simply that \emph{$\A \sqsubset \A'$ is an extension}. A \emph{filtration by free factor systems} is a sequence of extensions of free factor systems $\F_0 \sqsubset \cdots \sqsubset \F_K$. The \emph{meet} $\F_1 \wedge \F_2$ of two free factor systems $\F_1$ and $\F_2$ is the unique maximal free factor system that is contained in both $\F_1$ and $\F_2$. By a version of Grushko's theorem, $\F_1 \wedge \F_2$ is the set of nontrivial conjugacy classes of subgroups of the form $A_1 \intersect A_2$ such that $[A_1] \in \F_1$, $[A_2] \in \F_2$. The meet operation on pairs extends to a well-defined operation on any set of free factor systems. To each marked graph $G$ and subgraph $K$ there corresponds a free factor system denoted $\F(K)$ or $[K]$, by taking the conjugacy classes of the subgroups of $\pi_1(G) \approx F_n$ corresponding to the noncontractible components of~$K$. See Section 2.6 of \BookOne, and Sections 1.1.2 and 3.1 of \SubgroupsOne.
 
\paragraph{Lines and free factor support.} The \emph{space of lines} $\B=\B(F_n)$ is the quotient of $\bdy F^n \times \bdy F^n - \Delta$ by transposing coordinates and by letting $F_n$ act. Each line is realized in each marked graph as a bi-infinite path which is unique modulo orientation. A line is \emph{birecurrent} if for some (any) such realization, each finite subpath is repeated infinitely often in both directions. A line $\ell$ is \emph{carried by} a free factor system $\F = \{[A_1],\ldots,[A_k]\}$ if for some (any) marked graph $G$ with subgraph $H$ corresponding to $\F$, the realization of $\ell$ in $G$ is contained in~$H$. A conjugacy class is carried by $\F$ if it is represented by an element in one of the $A_i$'s, equivalently if the periodic line representing that conjugacy class is carried by $\F$. The \emph{free factor support} of a collection of lines or conjugacy classes is the meet of all free factor systems that carry each element of that collection, equivalently the unique minimal free factor system carrying the entire collection. A set of lines \emph{fills} $F_n$ if its free factor support is $\{[F_n]\}$. For each subgraph $K$ of a marked graph $G$ the free factor system $\F(K)$ is the free factor support of the set of conjugacy classes represented by circuits in $K$. 
See Section 2.6 of \BookOne\ or Section 2.5 of \recognition\ or Sections 1.1.2 and 1.2.2 of \SubgroupsOne.
 
\paragraph{Filtration, strata, height, relative train track maps.} A \emph{filtration} of a marked graph $G$ is a nested sequence of subgraphs $\emptyset = G_0 \subset G_1 \subset G_2 \subset \ldots G_N = G$. A path or circuit has \emph{height} $i$ if it is contained in $G_i$ but not $G_{i-1}$. For any filtration by free factor systems $\emptyset = \F_0 \sqsubset \F_1 \sqsubset \cdots \sqsubset \F_N = \{[F_n]\}$ there exists a filtered marked graph denoted as above such that $[G_i]=\F_i$. Given a filtration of $G$ as above and a topological representative $f \from G \to G$ of some outer automorphism, if $f(G_i) \subset G_i$ for all $i$ then the filtration is \emph{$f$-invariant} or just \emph{invariant} if $f$ is clear from the context. The union of edges contained in $G_i$ but not $G_{i-1}$ is a subgraph denoted $G_i \setminus G_{i-1} = H_i$ called the \emph{$i^{th}$ stratum}. If $f(H_i) \subset G_{i-1}$ then $H_i$ is a \emph{zero stratum}. When working with a topological representative $f \from G \to G$ of some $\phi \in \Out(F_n)$ and an $f$-invariant filtration of~$G$, we always assume that each stratum $H_i$ is either a zero stratum or an \emph{irreducible stratum}, the latter meaning that for any two edges $E,E' \subset H_i$ there exists $k \ge 1$ such that $f^k_\#(E)$ crosses $E'$. Also, we always assume that each irreducible stratum is either \neg\ which stands for \emph{nonexponentially growing} or \eg\ which stands for \emph{exponentially growing}: what these mean is that for some (every) edge $E \subset H_i$ the number of $H_i$ edges crossed by the path $f^k_\#(E)$ is at most $1$ for all $k$ in the \neg\ case, and has exponential upper and lower bounds in the \eg\ case. We sometimes say that an edge is \eg\ if it belongs to an \eg\ stratum and similarly for \neg\ edges. 

There is a useful hierarchy of better and more useful classes of topological representatives, although their existence sometimes requires first passing to a power. At the most basic, every outer automorphism is represented by a \emph{relative train track map} which is a topological representative with certain tightness properties imposed on its \eg\ strata. After passing to a positive power, subdividing certain edges, and straightening $f$ the following properties hold: any \neg\ stratum $H_i$ consists of a single oriented edge $E$ that satisfies $f(E) = E\cdot u$ for some (possibly trivial) path in $G_{i-1}$; and any \eg\ stratum $H_r$ has the property that for all sufficiently large $k$ the $f^k_\#$-image of each edge in $H_r$ crosses every edge in $H_r$ and the number of such crossings grows exponentially in~$k$. Furthermore, any \eg\ stratum has $\ge 2$ edges. See Section 5 of \cite{BestvinaHandel:tt}, Subsection 1.5.1 of \SubgroupsOne\ or Subsections 2.6 and 2.7 of \recognition.
 
\paragraph{Nielsen paths, splittings, highest edge splittings.} If $\sigma$ is a finite, nondegenerate path and if $f_\#^k(\sigma) = \sigma$ for some $k \ge 1$ then we say that $\sigma$ is a \emph{periodic Nielsen path}; if $k = 1$ then $\sigma$ is a \emph{Nielsen path}. A Nielsen path is \emph{indivisible} if it is not the concatenation of two non-trivial Nielsen subpaths. 
 
A decomposition of a path or circuit into subpaths is a \emph{splitting}, written $\sigma = \sigma_1 \cdot \ldots \cdot \sigma_m$, if $f^k_\#(\sigma)$ decomposes into subpaths $f^k_\#(\sigma_1) \cdot \ldots \cdot f^k_\#(\sigma_m)$ for all $k \ge 1$. See Section 4 of \BookOne, Sections 2.2 and 4.2 of \recognition, and Definition 1.27 of \SubgroupsOne.

Considered a filtered topological representative $f \from G \to G$ of $\phi \in \Out(F_n)$, and an \neg\ stratum $H_i \subset G$ consisting of a single non-fixed, oriented edge $E_i$ such that $f(E_i) = E_i u_i$ where the path $u_i$ is either trivial or is contained in $G_{i-1}$. A \emph{basic path of height~$i$} is a path having one the three forms $E_i \gamma$,~$\gamma \overline E_i$, or $E_i \gamma \overline E_i$ for some path $\gamma$ in $G_{i-1}$ which is allowed to be trivial for the first two forms. The \emph{basic splitting property for \neg\ edges} says that for each path $\gamma$ in $G$ of height~$i$ with endpoints at vertices, the path $\gamma$ splits canonically into a concatenation of basic subpaths of height $i$ and subpaths in $G_{i-1}$; the splitting points for $\gamma$ occur at the initial point of each occurrence of $E_i$ and the terminal point of each occurrence of $\overline E_i$ in $\gamma$. This splitting is called the \emph{highest edge splitting} of~$\gamma$. 
 See Lemma~4.1.4 of~\BookOne.

\paragraph{Rotationless relative train track maps, principal vertices.} Given a relative train track map $f \from G \to G$, a periodic vertex $v \in G$ is \emph{nonprincipal} if one of the following happens: $v$ is not an endpoint of a periodic Nielsen path, and there are exactly two periodic directions at $v$, both of which are contained in the same \eg\ stratum; or $v$ is contained is contained in a component $C$ of the set of periodic points such that $C$ is topologically a circle, and each point in $C$ has exactly two periodic directions. A periodic vertex which is not nonprincipal is said to be \emph{principal}. We say that $f \from G \to G$ is \emph{rotationless} if each principal vertex $v \in G$ is fixed and each periodic direction at $v$ has period one. See Definition~3.18 of \recognition\ and Section~1.5.1 of \SubgroupsOne\ for further discussion.
 
\paragraph{Rotationless outer automorphisms, \ct s, complete splitting.} Every $\psi \in \Out(F_n)$ has an iterate $\phi = \psi^k$ which is \emph{rotationless}, which implies that certain naturally occurring actions of $\phi$ on finite sets are trivial. As an example, every $\phi$-periodic free factor system is fixed by~$\phi$. It follows that if $\F \sqsubset \F'$ is an extension of $\phi$-invariant free factor systems, and if there is no $\phi$-invariant free factor system strictly between $\F$ and $\F'$, then there is no $\phi$-periodic free factor system strictly between them; in this case we say that $\phi$ is \emph{fully irreducible} relative to the extension $\F \sqsubset \F'$. Every rotationless outer automorphism is represented by a particularly nice kind of rotationless relative train track map $f \from G \to G$ called a ``\ct'' which stands for ``completely split relative train track representative''. In fact if $\F_0 \sqsubset \F_1 \sqsubset \cdots \sqsubset \F_K$ is any $\phi$-invariant filtration by free factor systems then there is a \ct\ having core filtration elements representing each~$\F_i$. In a \ct, a \emph{complete splitting} of a path or circuit is a splitting into terms that are either single edges in irreducible strata, \iNp s, exceptional paths (see below) or certain paths in zero strata. We shall often refer to the defining properties of \cts\ by their \textbf{(Parenthesized Titles)} as found in the citations below. For example, the \textbf{(Completely split)} property says that $f(E)$ is completely split for each edge $E$ of an irreducible stratum, and similarly for certain paths in zero strata. Also, the property \textbf{(Filtration)} says that for each filtration element $G_i$ of the given filtration $\emptyset = G_0 \subset G_1 \subset G_2 \subset \ldots G_N = G$, the maximal core subgraph of $G_i$ is also a filtration element, and if $[G_{i-1}] \ne [G_i]$ then $\phi$ is fully irreducible relative to $[G_{i-1}] \sqsubset [G_i]$. See Sections 3.3 and 4.1 of \recognition\ or Definitions~1.27, 1.28 and 1.29 of \SubgroupsOne.

\paragraph{Twist paths, \neg\ Nielsen paths, exceptional paths, linear families.} \quad\\ Suppose that $\fG$ is a \ct. An \neg\ edge $E$ is \emph{linear} if $f(E) = E \cdot u$ for some Nielsen path~$u$. In this case, there is a closed root-free Nielsen path $w$ such that $u= w^{d}$ for some $d \ne 0$. The path $w$ is called the \emph{twist path associated to $E$} or sometimes just a \emph{twist path}. Paths of the form $Ew^p \overline E$ are \iNp s and every \iNp\ with \neg\ height is of this form. The unoriented conjugacy class determined by $w$ is called the \emph{axis} or \emph{twistor} associated to $E$. If $E_i$ and $E_j$ are distinct linear edges with the same axes then $w_i = w_j$ and $d_i \ne d_j$. In this case we say that $E_i$ and $E_j$ belong to the same \emph{linear family}. If in addition $d_i$ and $d_j$ have the same sign then a path of the form $E_i w^s \overline E_j$ is called an \emph{exceptional path in the linear family associated to $w$} or sometimes just an \emph{exceptional path}. See the beginning of Section 4.1 of \recognition\ or Definition 1.27 or \SubgroupsOne.

\paragraph{Attracting laminations and the set $\L(\phi)$.}  Given $\phi \in \Out(F_n)$ an \emph{attracting lamination} is a set of lines $\Lambda \subset \B$ which is the closure of a single bi-recurrent, nonperiodic line $\ell \in \Lambda$ for which there is an open set $U \subset \B$ and $k \ge 1$ such that $\{\phi^{ik}(U) \bigm| i \ge 0\}$ is a neighborhood basis of $\ell$ (equivalently of $\Lambda)$. We call $U$ an \emph{attracting neighborhood} of $\ell$, and we call $\ell$ a \emph{generic line} of $\Lambda$. We use $\L(\phi)$ to denote the finite set of all attracting laminations of~$\phi$. If $\phi$ is rotationless then we may take $k=1$ in the definition of an attracting neighborhood, in which case there is a natural bijection between the set of \eg\ strata of any representative \ct\ $\fG$ and the set $\L(\phi)$. We need two characterizations of this bijection: $H_r \leftrightarrow \Lambda$ if and only if a generic leaf of $\Lambda$ has height $r$; equivalently $H_r \leftrightarrow \Lambda$ if and only if the free factor system $[G_r]$ properly contains the free factor system $[G_{r-1}]$, and $[G_r]$ is the support of the union of the lines in the lamination $\Lambda$ and the lines carried by $[G_{r-1}]$; the equivalence of these is a direct consequence of the \ct\ defining property \textbf{(Filtration)}. Also, there is a pairing between elements of $\L(\phi)$ and the elements of $\L(\phi^{-1})$ characterized by the property that paired laminations have the same free factor support. See Sections 3.1 and 3.2 of \BookOne.

Suppose that $\Lambda^+ \in \L(\phi)$ corresponds to $H_r$. By Lemma 3.1.15 of \BookOne, if both ends of a line $\ell \in \Lambda^+$ are dense in $\Lambda^+$ then $\ell$ is generic; the converse is obvious. We say that $\ell $ is \emph{semi-generic} if exactly one of its ends is dense in~$\Lambda^+$ and is \emph{ageneric} if neither of its ends is dense in~$\Lambda^+$. Lemma 3.1.15 of \BookOne\ also states that ageneric leaves are entirely contained in $G_{r-1}$ and so form a sublamination $\Lambda$, called the \emph{ageneric sublamination}, that is the unique maximal proper sublamination of $\Lambda$. The free factor support $\F_{ag}$ of the ageneric sublamination is proper because it is contained in the free factor system determined by~$G_{r-1}$.

\paragraph{Enveloping of zero strata.} If $H_r$ is an \eg\ stratum then there is at most one (up to a change of orientation) \iNp\ of height~$r$. Let $u< r$ be the maximal index for which $H_u$ is irreducible. If there is an \iNp\ of height $r$ then $u = r-1$. In the general case where $u \le r-1$, we denote $H^z_r = \bigcup_{u < i \le r} H_i$; the terms of this union are zero strata which we say are \emph{enveloped} by $H_r$, and they are the unique contractible components of $G_{r-1}$, so each component of $G_u = G_r \setminus H^z_r$ is noncontractible. See Definition 2.18 of \recognition\ or Definition 1.28 of \SubgroupsOne.


\paragraph{The expansion factor homomorphism $\PF$.}
Given $\Lambda \in \L(\phi)$ for some $\phi \in \Out(F_n)$, the stabilizer of $\Lambda$ is denoted $\Stab(\Lambda) \subgroup \Out(F_n)$. By Corollary 3.3.1 of \BookOne\ there is a homomorphism $\PF = \PF_\Lambda \from \Stab(\Lambda) \to \Z$ whose kernel consists of those $\psi$ such that neither $\L(\psi)$ nor $\L(\psi^{-1})$ contains $\Lambda$.  It  is the unique surjective homomorphism such that $\PF(\phi) > 0$, and such that there exists a number $\mu > 1$ satisfying the following properties. Let $\fG$ be any \ct\ representing a rotationless power of $\phi$, with \eg\ stratum $H_r$ corresponding to $\Lambda$. For each finite path $\sigma$ in $G$ let $EL_r(\sigma)$ denote the number of times $\sigma$ crosses edges of $H_r$. Given $\psi \in \Stab(\Lambda)$ let $g \from G \to G$ be any topological representative of $\psi$ defined on~$G$. Then $\PF_\Lambda(\psi)$ is the unique number with the property that for any $\epsilon>0$ there exists $N>0$ such that for each finite subpath $\sigma$ of a generic leaf of $\Lambda$ in $G$, if $EL_r(\sigma) \ge N$ then
$$\abs{\frac{EL_r(g_\#(\sigma))}{EL_r(\sigma)} - \mu^{\PF_\Lambda(\psi)}} \le \epsilon 
$$
When $\Lambda$ is clear from context we write simply $\PF(\psi)$. See Section 3.3 of \BookOne\ for details, in particular Definition 3.3.2, Proposition 3.3.3 and Corollary 3.3.1.

\paragraph{\pg\ versus \upg.} An outer automorphism $\phi \in \Out(F_n)$ is polynomially growing or \pg\ if and only if $\L(\phi) = \emptyset$. Assuming that $\phi$ is \pg, we say that $\phi$ is \upg\ if the action of $\phi$ on $H_1(F_n) \approx \Z^n$ is unipotent. Every \pg\ element of $\IA_n(\Z/3)$ is \upg. A \pg\ subgroup of $\Out(F_n)$ is one whose elements are all \pg, and similarly for \upg.
See Section 5.7 of \BookOne.

\paragraph{Multi-edge extensions, and \eg\ strata.} 

We say that an extension of free factor systems $\F \sqsubset \F'$ is a \emph{one edge extension} if it is realized in some marked graph $G$ by a pair of core subgraphs $H \subset H'$ such that $H' \setminus H$ consists of one edge of $G$. Otherwise, $\F \sqsubset \F'$ is a \emph{multi-edge extension}. Let $\phi \in \Out(F_n)$ be rotationless. If $\phi$ is fully irreducible relative to a properly nested extension $\F \sqsubset \F'$ of $\phi$-invariant free factor systems then the following hold: if $\F \sqsubset \F'$ is a one-edge extension then every attracting lamination carried by $F'$ is carried by $F$; whereas if $\F \sqsubset \F'$ is a multi-edge extension then there exists a unique attracting lamination $\Lambda \in \L(\phi)$ that is carried by $\F'$ but not by~$\F$. See \BookOne\ Section 3.1 and Corollary 3.2.2.
 
\paragraph{Weak attraction and the nonattracting subgroup system $\A_\na(\Lambda^\pm)$.} Consider a rotationless $\phi \in \Out(F_n)$ and a lamination pair $\Lambda^\pm$ for $\phi$. Choose a \ct\ $f \from G \to G$ representing~$\phi$. Given a conjugacy class $[a]$ of $F_n$, $a \in F_n$, and letting $\sigma$ be the circuit in $G$ representing $[a]$, one says that $[a]$ is \emph{weakly attracted} to $\Lambda^+$ if for each finite subpath $\gamma$ of some (every) generic leaf of $\Lambda^+$ (realized in $G$) there exists $K = K(\gamma)$ such that $\gamma$ is a subpath of $f^k_\#(\sigma)$ for all $k \ge K$. Weak attraction of lines---elements of $\B$---is defined similarly. Weak attraction is well-defined independent of the choice of \ct. The \emph{nonattracting subgroup system} for~$\Lambda^+$, denoted $\A_\na(\Lambda^+)$, is the unique vertex group system with the property that a conjugacy class $[a]$ of $F_n$ is not weakly attracted to $\Lambda^+$ if and only if there exists $[A] \in \A_\na(\Lambda^+)$ such that $a$ is conjugate to an element of~$A$. If $\Lambda^- \in \L(\phi^{-1})$ is paired with $\Lambda^+$ then $\A_\na(\Lambda^+) = \A_\na(\Lambda^-)$ so we usually write $\A_\na(\Lambda^\pm)$. Although not indicated in the notation, the definition of $\A_\na(\Lambda^\pm)$ depends on $\phi$ as well as $\Lambda^\pm$. See Section 6 of \BookOne\ and Section 1 of \SubgroupsThree.

\paragraph{Geometricity of \eg\ strata and attracting laminations.} Given a \ct\ $f \from G \to G$, its \eg\ strata are classified as being either \emph{geometric} or \emph{non-geometric}. Given $\phi \in \Out(F_n)$, its attracting laminations $\Lambda \in \L(\phi)$ are also classified as geometric or nongeometric, in that exactly one of the following holds: \emph{either} for every \ct\ $f \from G \to G$ representing a rotationless power of $\phi$, the \eg\ stratum corresponding to $\Lambda$ is geometric; \emph{or} for every such \ct\ the \eg\ stratum corresponding to $\Lambda$ is nongeometric. An \eg\ stratum $H_r$ is geometric if and only if there exists a closed, indivisible Nielsen path $\rho$ of height $r$, if and only if the nonattracting subgroup system $\A_\na(\Lambda_r)$ is \emph{not} a free factor system; also, $\rho$ is the unique indivisible height~$r$ Nielsen path up to reversal. Furthermore, if $H_r$ and $\Lambda_r$ are geometric, and if $H_r$ is the top stratum, then $\A_\na(\Lambda_r)$ consists of the free factor system $[G_{r-1}]$ and one additional rank~1 component~$[\<\rho\>]$. See Section 5.3 of \BookOne\ and Section 2 of \SubgroupsOne; also see Section~\ref{SectionExpansionKernel} below for a more in depth review.

\section{Free splittings and marked graph pairs}
\label{SectionFSAndMGP}

In this section we review the free splitting complex (see \cite[Section 1]{HandelMosher:FreeSplittingHyperbolic}) and we describe a new approach using marked graph pairs.

\paragraph{Free splittings.} A \emph{free splitting} of $F_n$ is a minimal simplicial action of $F_n$ on a simplicial tree $T$ with trivial edge stabilizers; we follow the convention of suppressing the action and letting $T$ stand for the free splitting. Two free splittings are equivalent if there is an equivariant homeomorphism between their trees. The homeomorphism is not assumed to be simplicial so the equivalence class of a free splitting is completely determined by the \emph{natural simplicial structure} on $T$, meaning the unique simplicial structure in which all vertices have valence at least three. Note that if two free splittings have the same underlying trees and if their $F_n$ actions differ only by conjugation by an element of $F_n$ then the free splittings are equivalent. A \emph{$k$-edge splitting} is one with $k$ orbits of natural edges.

Given a marked graph $G$ with universal cover $\wt G$, the marking on $G$ provides an identification of the group of covering translation of $\wt G$ with $F_n$ that is well defined up to composition with an inner automorphism and so determines a well defined equivalence class of free splittings. Every free splitting for which the action is proper occurs in this manner. 

\begin{remark} One can understand free splittings up to equivalence by studying an appropriate quotient object. For example, associated to a free splitting $T$ is its quotient graph of groups $T / F_n$ \cite{ScottWall}. The fundamental group $\pi_1(T/F_n)$ is defined in the category of graphs of groups \cite{Serre:trees}, and $\pi_1(T/F_n)$ is identified with $F_n$ up to conjugacy, which provides a ``marking'' for $T / F_n$. We could therefore understand free splittings as marked graphs of groups up to appropriate equivalence. Rather than pursue this line of thought, we avoid the concept of $\pi_1(T/F_n)$ altogether. Instead we pursue the more topological approach of a ``marked graph pair'', which is very close to a ``graph of spaces'' as used in \cite{ScottWall}. This leads to an understanding of free splittings as marked graph pairs up to equivalence. 
\end{remark}


\paragraph{Marked graph pairs.} Given marked graphs $(G,\rho)$, $(G',\rho')$, a homotopy equivalence $h \from G \to G'$ \emph{preserves markings} if the maps $\rho', h \rho \from R_n \to G'$ are homotopic. Equivalently, there is a lift $\ti h : \wt G \to \wt G'$ that is equivariant with respect to the $F_n$-actions on $\wt G$ and~$\wt G'$. Two marked graphs $(G,\rho)$ and $(G',\rho')$ are \emph{equivalent} if there is a homeomorphism $h : G \to G'$ that preserves markings. Equivalently, $\wt G$ and $\wt G'$ determine equivariantly homeomorphic free splittings.
 
 If $H$ is a subgraph of $G$ then we write $G - H$ for the complement of $H$ in $G$, and $G \setminus H$ for the closure of $G-H$. Thus $G \setminus H$ is the subgraph that is the union of all edges not contained in $H$.

A \emph{marked graph pair} is a pair $(G,H)$ where $G$ is a marked graph and $H$ is a proper, natural subgraph of $G$ such that $H$ contains every natural vertex of $G$. The number of natural edges in $G \setminus H$ is the \emph{co-edge number} of~$(G,H)$.
 
\begin{definition} \label{defn:relation} Define a relation $\sim$ on marked graph pairs $(G,H)$, $(G',H')$ as follows. We define $(G,H) \sim (G',H')$ if there is a homotopy equivalence of pairs $h \from (G,H) \to (G',H')$ such that 
\begin{enumerate}
\item\label{ItemPairMarkings} 
$h \from G \to G'$ preserves markings.
\item\label{ItemPairEdges}
There is a bijection $E \longleftrightarrow E'$ between the natural edges of $G \setminus H$ and the natural edges of $G' \setminus H'$ so that for each corresponding pair $E,E'$ we have $h(E) = \mu'E'\nu'$ where each of $\mu'$ and $\nu'$ is a path in $H'$, possibly trivial. 
\end{enumerate}
\end{definition}

\begin{lemma}\label{LemmaPairEqRel}
The relation of Definition~\ref{defn:relation} is an equivalence relation on marked graph~pairs. 
\end{lemma}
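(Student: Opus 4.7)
The plan is to verify reflexivity, symmetry, and transitivity in turn; the first and third are essentially formal, while symmetry is the substantive case.

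\textbf{Reflexivity and transitivity.} Reflexivity is immediate from $h = \Id_G$ with trivial peripheral paths. For transitivity, suppose $h_i:(G_i,H_i)\to(G_{i+1},H_{i+1})$ satisfies (1)--(3) for $i=1,2$. The composition preserves markings and induces the composite bijection of natural vertices of $G_i - H_i$. For condition (3), if $h_1(E_1)=\mu_2 E_2 \nu_2$ and $h_2(E_2)=\mu_3 E_3 \nu_3$ with peripheral paths in $H_2$ and $H_3$ respectively, then since $h_2$ is a map of pairs we have $h_2(H_2) \subset H_3$, and therefore
$$
(h_2\circ h_1)(E_1) = h_2(\mu_2)\,\mu_3\,E_3\,\nu_3\,h_2(\nu_2),
$$
whose outer factors are edge paths in $H_3$, giving the required form.

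\textbf{Symmetry.} Given $h:(G,H)\to(G',H')$ satisfying (1)--(3), the plan is to construct $h':(G',H')\to(G,H)$ also satisfying (1)--(3) by working equivariantly on universal covers. Marking preservation gives an $F_n$-equivariant lift $\tilde h:\tilde G\to\tilde G'$ with $\tilde h(\tilde H)\subset \tilde H'$. Collapsing each connected component of $\tilde H$ (resp.\ $\tilde H'$) to a point produces an $F_n$-tree $T$ (resp.\ $T'$) whose edges correspond to $F_n$-orbits of edges in $\tilde G \setminus \tilde H$ and whose vertices correspond to orbits of components of $\tilde H$ together with orbits of isolated natural vertices of $\tilde G - \tilde H$; each such tree is a free splitting of $F_n$. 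Conditions (2) and (3) imply that $\tilde h$ descends to an $F_n$-equivariant simplicial bijection $T \to T'$, i.e., an isomorphism of free splittings. In particular, there is an induced bijection between the components of $H$ and of $H'$, and $h$ restricts on each matched pair to a homotopy equivalence. Now choose marking-compatible homotopy inverses of these restrictions to define $h'$ on $H'$; use the inverse of the bijection in (2) to define $h'$ on natural vertices of $G'-H'$; and for each edge $E'$ of $G'\setminus H'$ paired with $E$ under (3), set $h'(E')=\mu\,E\,\nu$ with $\mu,\nu$ paths in $H$ (possibly trivial) chosen to match endpoints. Such $\mu,\nu$ exist because in each case the two endpoints to be joined either lie in a common component of $H$ or coincide as isolated natural vertices of $G-H$. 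By arranging these choices equivariantly on universal covers, $h'$ becomes a marking-preserving homotopy equivalence of pairs satisfying (1)--(3).

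\textbf{Main obstacle.} The crux is the tree-theoretic step: showing that hypotheses (2) and (3) on $h$ force the induced map of Bass--Serre trees $T \to T'$ to be a simplicial isomorphism, thereby producing both the bijection between components of $H$ and of $H'$ and the homotopy-equivalence restrictions needed to invert $h$ on $H'$. Once this is in place, the assembly of $h'$ and the verification of (1)--(3) reduce to matching peripheral-path choices at endpoints, which is elementary.
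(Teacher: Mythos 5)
Your overall skeleton is the paper's: reflexivity and transitivity are formal (the paper dismisses them in a sentence), and the substance is symmetry, handled by working equivariantly in universal covers and assembling an inverse-direction map from a matching of vertices and of components of $H$, $H'$ together with connecting paths in $H$. The difficulty is that the one step carrying all of the content --- your assertion that conditions (2) and (3) force the induced map of collapsed trees $T \to T'$ to be an $F_n$-equivariant simplicial isomorphism, from which you extract the bijection between components of $H$ and $H'$ and the fact that $h$ restricts to homotopy equivalences between matched components --- is exactly the step you do not prove; you yourself flag it as the ``main obstacle.'' As written, then, the proposal establishes nothing beyond what the paper already calls clear. The claim is true and can be closed along your lines: the edge bijection in (3) lifts to an equivariant bijection between the natural edges of $\widetilde G \setminus \widetilde H$ and of $\widetilde G' \setminus \widetilde H'$ because both deck actions are free; and a map of trees sending each edge of $T$ onto a single edge of $T'$, with this edge correspondence bijective, cannot identify two distinct vertices of $T$, since the image of the embedded arc joining them would be a nontrivial closed edge path in the tree $T'$ crossing each of its (distinct) edges exactly once, which is impossible. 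Alternatively you can bypass the trees entirely, as the paper does: since $h$ restricts to a homotopy equivalence $H \to H'$, components of $\widetilde H$ and $\widetilde H'$ are matched by having equal stabilizers, and one defines the inverse map $\widetilde g$ on one natural vertex per orbit (the unique $\tilde h$-preimage for vertices outside $\widetilde H'$, an arbitrary natural vertex of the matched component otherwise) and extends equivariantly over vertices and edges.

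A second, smaller deferral: your statement that the connecting paths $\mu,\nu$ exist ``because the endpoints lie in a common component of $H$'' is not yet the statement you need. Different choices of $\mu$ downstairs (differing by loops in $H$) generally change the induced outer automorphism, so marking preservation is not automatic; what must be checked is that the relevant endpoints upstairs lie in a common component of $\widetilde H$, so that the connecting paths can be chosen there and pushed down equivariantly. That verification is precisely the endgame of the paper's proof of item (3) for the inverse map, and it is where your condition (1) is actually earned rather than an elementary afterthought.
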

 
\begin{proof} The reflexive and transitive properties are clear so it suffices to assume that $h \from (G,H) \to (G',H')$ satisfies \pref{ItemPairMarkings} and \pref{ItemPairEdges}, and to produce $g \from (G',H') \to (G,H)$ which is a homotopy inverse of $h$ in the category of pairs that satisfies \pref{ItemPairEdges} (clearly $g$ automatically satisfies~\pref{ItemPairMarkings}).

Since $h$ restricts to a homotopy equivalence $H \to H'$, we may denote the components as $H = \union C_i$ and $H' = \union C'_i$, with identical index sets, so that the relation $C_i \leftrightarrow C'_i$ is a bijection of components and $h(C_i) \subset C'_i$. Lift $h$ to an equivariant map of universal covers $\ti h \from \wt G \to \wt G'$. Let $\wt H \subset \wt G$ and $\wt H' \subset \wt G'$ be the total lifts of $ \H$ and $H'$ respectively. We have subdivisions denoted $\wt H = \union \wt C_i$ and $\wt H' = \union \wt C'_i$ where $\wt C_i$ and $\wt C'_i$ are the total lifts of $C_i$ and $C'_i$ respectively, and for each $i$ we have component subdivisions denoted $\wt C_i = \union \wt C_{ij}$ and $\wt C'_i = \union \wt C'_{ij}$, with identical index sets, so that the relation $\wt C_{ij} \leftrightarrow \wt C'_{ij}$ is a bijection between components of $\wt H$ and components of $\wt H'$, and so that $\ti h(\wt C_{ij}) \subset \wt C'_{ij}$. 

Construct an equivariant map $\ti g \from \wt G' \to \wt G$ as follows. For each natural vertex $v' \in G'$ choose a lift $\ti v' \in \wt G'$. Let $C'_i$ be the component of $H'$ containing $v'$, and let $\wt C'_{ij}$ be the component of $\wt C'_i$ containing $\ti v'$. Choose a natural vertex $\ti v \in \wt C_{ij}$ and define $\ti g(\ti v') = \ti v$. Having defined $\ti g$ on one vertex in each $F_n$-orbit of natural vertices of $\wt G'$, now extend it equivariantly over all natural vertices, and then extend it equivariantly over all natural edges of $\wt G'$ so that it is either injective or constant on each edge. For each edge $\wt E' \subset \wt C'_{ij}$ having endpoints $u,w$ we clearly have $\ti g(u), \ti g(w) \in \wt C_{ij}$ and so $\ti g(\wt E') \subset \wt C_{ij}$. It follows that $\ti g (\wt C'_{ij}) \subset \wt C_{ij}$. 

The map $\ti g$ descends to a homotopy equivalence $g \from (G',H') \to (G,H)$ which is a homotopy inverse of $h$ in the category of pairs. The bijection $E \leftrightarrow E'$ between edges of $G \setminus H$ and edges of $G' \setminus H'$ lifts to a bijection $\wt E \leftrightarrow \wt E'$ between edges of $\wt G \setminus \wt H$ and edges of $\wt G' \setminus \wt H'$ so that $\ti h(\wt E) = \ti \mu' \wt E' \ti \nu'$ where $\ti\mu',\ti\nu'$ are each paths in $\wt H'$, possibly trivial. Let $\ti v_1,\ti v_2$ be the initial and terminal endpoints of $\wt E$ respectively, and let $\ti v'_1,\ti v'_2$ be the initial and terminal endpoints of $\wt E'$ respectively. Since $\ti \mu'$ is a path in $\wt H$ connecting $\ti h(\ti v_1)$ to $\ti v'_1$, there is a corresponding component pair $\wt C_{ij} \leftrightarrow \wt C'_{ij}$ such that $\ti v_1 \in \wt C_{ij}$ and $\ti v'_1 \in \wt C'_{ij}$. It follows that $\ti g(v'_1) \in \wt C_{ij}$, and so there is a path $\ti \mu \subset \wt C_{ij} \subset \wt H$ from $\ti g(v'_1)$ to $\ti v_1$. Similarly there is a path $\ti \nu \subset H$ from $\ti v_2$ to $\ti g(v'_2)$. By construction $\ti g(\wt E') = \ti \mu \wt E \ti \nu$. This completes the proof that $g$ satisfies~\pref{ItemPairEdges} and so completes the proof of the lemma.
\end{proof}
 
An equivalence class of marked graphs determines an equivalence class of proper free splittings. The same is true for an equivalence class of marked graph pairs, and furthermore every free splitting arises in this manner, as shown in the following lemma.
 
For any marked graph pair $(G,H)$, letting $\wt H \subset \wt G$ be the total lift of $\wt H$, there is a free splitting denoted $\wt G / \wt H$ which is obtained by collapsing to a point each component of $\wt H$.
 
\begin{lemma} \label{equivalent free splittings} For each equivalence class $[(G,H)]$ of marked graph pairs, the equivalence class $\<G,H\>$ of the free splitting $\wt G / \wt H$ is well defined. Moreover, $[(G,H)] \longleftrightarrow \<G,H\>$ defines a bijection between the set of equivalence classes of co-edge $k$ marked graph pairs and the set of equivalence classes of $k$-edge free splittings. 
\end{lemma}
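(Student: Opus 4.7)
The plan is to handle well-definedness, surjectivity, and injectivity in turn. The main obstacle will be the injectivity step, where I have to reconstruct a marked graph pair (up to the equivalence of Definition~\ref{defn:relation}) from an equivariant simplicial isomorphism between the collapsed trees.

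For well-definedness, suppose $(G,H) \sim (G',H')$ via a homotopy equivalence $h$ satisfying conditions (1)--(3) of Definition~\ref{defn:relation}, with equivariant lift $\ti h : \wt G \to \wt G'$. As in the previous lemma, $\ti h$ induces a stabilizer-preserving bijection between the components of $\wt H$ and those of $\wt H'$, and each natural edge $\ti E$ of $\wt G \setminus \wt H$ maps to a path $\ti \mu' \ti E' \ti \nu'$ with $\ti \mu', \ti \nu'$ contained in $\wt H'$. Collapsing components of $\wt H$ and $\wt H'$ to points produces simplicial trees $T$ and $T'$, and $\ti h$ descends to an equivariant bijection of their vertex and edge sets, that is, to an equivariant simplicial isomorphism $T \to T'$. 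Thus the free splitting class $\<G,H\>$ depends only on $[(G,H)]$.

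For surjectivity onto $k$-edge free splittings, given a $k$-edge free splitting $T$ pass to its quotient graph of groups $T/F_n$ in the sense of the earlier Remark. Each vertex group $A_v$ is a free subgroup of $F_n$. For each vertex with $A_v$ nontrivial pick a connected marked graph $C_v$ realizing $\pi_1(C_v) \cong A_v$ and having all vertices of valence at least two; for trivial $A_v$ let $C_v$ be a single vertex. For each edge of $T/F_n$ attach one edge connecting the appropriate $C_v$'s, producing a marked graph $G$ with natural subgraph $H$ equal to the union of the noncontractible $C_v$'s. Bass--Serre theory identifies $\wt G/\wt H$ equivariantly with $T$, so $\<G,H\> = [T]$, and the co-edge number equals the number of $F_n$-orbits of edges of $T$, namely $k$.

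For injectivity, suppose $\<G,H\> = \<G',H'\>$, and fix an equivariant simplicial isomorphism $\alpha : T \to T'$ between the collapsed trees. Pulling back through the collapsing maps $\wt G \to T$ and $\wt G' \to T'$ gives a stabilizer-preserving bijection $\wt C_i \leftrightarrow \wt C'_i$ between components of $\wt H$ and $\wt H'$ (the crucial point is that a collapsed component of $\wt H$ has the same $F_n$-stabilizer as the resulting vertex of $T$, and distinct components of $\wt H$ have distinct stabilizers so the orbit correspondence is forced by $\alpha$), and a compatible equivariant bijection between edges of $\wt G \setminus \wt H$ and $\wt G' \setminus \wt H'$. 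For each orbit pair $(\wt C_i, \wt C'_i)$ the matching stabilizers yield a marking-preserving homotopy equivalence between the underlying components of $H$ and $H'$, which lifts equivariantly. Assembling these lifts together with the edge bijection and a linear extension across natural edges of $\wt G \setminus \wt H$ produces an equivariant $\ti h : \wt G \to \wt G'$ that descends to $h : (G,H) \to (G',H')$ satisfying (1)--(3) of Definition~\ref{defn:relation}, proving $(G,H) \sim (G',H')$.
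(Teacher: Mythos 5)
Your proposal is correct and follows essentially the same route as the paper: well-definedness by descending an equivariant lift of $h$ to the collapsed trees, surjectivity by a standard construction (the paper simply cites the well-known collapse description of free splittings where you instead sketch the graph-of-groups realization), and injectivity by building an equivariant map $\ti h \from \wt G \to \wt G'$ from the equivariant homeomorphism of collapsed trees and checking (1)--(3) of Definition~\ref{defn:relation}. The only cosmetic difference is that on $\wt H$ you define $\ti h$ via equivariant lifts of marking-compatible homotopy equivalences of components, whereas the paper sends natural vertices to arbitrary natural vertices of the corresponding component and extends equivariantly over edges; both yield the same verification of condition (3).
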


\begin{proof} 
 Consider equivalent marked graph pairs $(G,H)$ and $(G',H')$ and choose $h \from (G,H) \to (G',H')$ satisfying Definition~\ref{defn:relation}. Equip $\wt G$ and $ \wt G'$ with actions on $F_n$ that are compatible with the markings and let $\ti h\from \wt G \to \wt G'$ be an equivariant lift. Then $\ti h$ induces a bijection between the components of $\wt H$ and of $\wt H'$, and a bijection $\wt E \longleftrightarrow \wt E'$ between the natural edges of $\wt G \setminus \ti H$ and of $\wt G' \setminus \wt H'$, so that for each $\wt E$ we have $\ti h(E) = \ti \mu'\wt E' \ti \nu'$ where $\ti \mu'$ and $\ti \nu'$ are paths in $\wt H'$, possibly trivial. After collapsing each component of $\wt H$ to a point and each component of $\wt H'$ to a point, $\ti h$ induces an equivariant map that is homotopic to an equivariant homeomorphism $\hat h\from \wt G/\wt H \to \wt G'/\ti H'$. (The induced map itself may fail to be injective on edges because the subintervals that map to $\ti \mu'$ or $\ti \nu'$ are each collapsed to points.) This proves that there is a well defined map $[(G,H)] \mapsto \<G,H\>$.

Surjectivity is well known, in that any free splitting $T$ can be 
obtained from some properly discontinuous free splitting $T'$ by 
collapsing to a point each component of some invariant subforest of $T'$; see e.g.\ \cite{HandelMosher:distortion_v1} (an early version of \cite{HandelMosher:distortion}), Section 3.2, pages 30--31, under the heading ``How to construct trees in $\mathcal K^T_n$''.

It remains to show that the map is injective. Consider marked graph pairs $(G,H)$, $(G',H')$ and the corresponding free splittings given by collapse maps
$$\pi \from G \to \wt G / \wt H = T
$$
$$\pi' \from G' \to \wt G' / \wt H' = T'
$$
Suppose that there is an equivariant homeomorphism $\tau \from T \to T'$. We must show that $(G,H)$ and $(G',H')$ are equivalent marked graph pairs, so we must produce a homotopy equivalence $h \from (G,H) \to (G',H')$ as in Definition~\ref{defn:relation}. The maps
$$\wt G \xrightarrow{\pi} T \xrightarrow{\tau} T' \xleftarrow{\pi'} \wt G'
$$
induce equivariant bijections denoted
$$\wt C_{ij} \leftrightarrow w_{ij} \leftrightarrow w'_{ij} \leftrightarrow \wt C'_{ij}
$$
amongst the sets
$$\{\text{components of $\wt H$}\} \leftrightarrow \{\text{vertices of $T$}\} \leftrightarrow \{\text{vertices of $T'$}\} \leftrightarrow \{\text{components of $\wt H'$}\}
$$
We now proceed as in the proof of Lemma~\ref{LemmaPairEqRel}: for each natural vertex $v \in G$ choose a lift $\ti v \in \wt G$, let $\wt C_{ij}$ be the component of $\wt H$ containing $v$, and choose $\ti h(v)$ to be a natural vertex in $\wt C'_{ij}$; then extend $\ti h$ equivariantly over all natural vertices and then equivariantly over all natural edges so that it is injective or constant on each edge. It follows that $\ti h(\wt C_{ij}) \subset \wt C'_{ij}$ for each corresponding component pair $\wt C_{ij} \leftrightarrow \wt C'_{ij}$. The map $\ti h$ descends to a homotopy equivalence $h\from (G, H) \to (G',H')$ that preserves markings. Thus item~\pref{ItemPairMarkings} of Definition~\ref{defn:relation} is satisfied. 

There is an equivariant bijection $\wt E \longleftrightarrow \wt E'$ between the set of natural edges $\wt E \subset \wt G \setminus \wt H$ and the set of natural edges $\wt E' \subset \wt G' \setminus \wt H'$ with the property that $\wt E'$ projects to the edge in $T'$ that is the $\tau$ image of the projection into $T$ of $\wt E$. Let $\ti v_1$ and $\ti v_2$ be the initial and terminal endpoints of $\wt E$ respectively and let $\ti v_1'$ and $\ti v_2'$ be the initial and terminal endpoints of $\wt E'$ respectively. By construction $\tau(\pi(\ti v_1)) = \pi'(\ti v'_1)$, and so we have corresponding vertices $\pi(\ti v_1)=w_{ij} \leftrightarrow w'_{ij} = \pi'(\ti v'_1)$ of $T$ and $T'$ respectively, and corresponding components $\wt C_{ij} \leftrightarrow \wt C'_{ij}$ of $\wt H$ and $\wt H'$ respectively such that $\ti v_1 \in \wt C_{ij}$ and $\ti v'_1 \in \wt C_{ij}$. Since $\ti h(\ti v_1) \in \wt C'_{ij}$ there is a path $\ti \mu' \subset \wt C'_{ij} \subset \wt H'$ from $\ti h(\ti v_1)$ to $\ti v'_1$. Similarly there is a path $\ti \nu' \subset \wt H'$ from $\ti v'_2$ to $\ti h(\ti v_2)$. By construction $\ti h(\wt E) = \ti \mu' \wt E' \ti \nu'$ so $h$ satisfies item \pref{ItemPairEdges} of Definition~\ref{defn:relation} and we are done. 
\end{proof}
 
\paragraph{The free splitting complex $\fscn$.} An equivariant simplicial map $f \from S \to T$ between free splittings is a \emph{collapse map} if $f$ is injective over the interior of each edge of $T$; an edge of $S$ belongs to the \emph{collapsed subgraph} $\sigma$ if its $f$-image is a vertex. Thus $\sigma$ is an $F_n$-invariant proper subforest and $T$ is obtained from $S$ by collapsing each component of $\sigma$ to a point. Up to equivalence of $T$ we may assume that $\sigma$ is a natural subforest.
 
The free splitting complex $\fscn$ is a simplicial complex having one $k$-simplex for each equivalence class of $(k+1)$-edge free splitting, and with the simplex corresponding to $T$ being a face in the simplex corresponding to $S$ if there is a non-trivial collapse map $S \mapsto T$, collapsing to a point each component of some proper natural subforest of $S$. Denote the first barycentric subdivision of $\fscn$ by $\fscnp$. See \cite{HandelMosher:FreeSplittingHyperbolic} for more details. 
 
In the language of marked graph pairs we have
 
 
\begin{lemma} \label{LemmaCollapseMaps} For any marked graph pair $(G,H)$ and any proper natural subgraph $H'$ that properly contains $H$, the simplex in $\fscn$ determined by $\<G,H'\>$ is a face of the simplex in $\fscn$ determined by $\<G, H\>$. Moreover, all the faces of the simplex determined by $\<G, H\>$ have this form.
\end{lemma}

\begin{proof} If $H'$ is a proper natural subgraph of $G$ that properly contains $H$ then $\wt H'$ is a proper natural forest in $\wt G$ that properly contains $\wt H $. The image of $\wt H'$ in the tree $\wt G / \wt H$ obtained from $\wt G$ by collapsing each component of $\wt H$ to a point is a proper natural forest of $\wt G / \wt H$ that contains at least one orbit of edges. Collapsing that forest defines a collapse map $\wt G / \wt H \to \wt G / \wt H'$. Since $\wt G / \wt H$ represents $\langle G,H\rangle$ and $\wt G / \wt H'$ represents $\langle G,H'\rangle$ this proves that the simplex determined by $\<G,H'\>$ is a face of the simplex determined by $\<G, H\>$. 

For the converse consider the free splitting $S = \wt G / \wt H$ and a face of its simplex corresponding to the free splitting $T$ which is obtained from $S$ by collapsing to a point each component of an $F_n$-invariant, proper, natural subforest $\sigma \subset S$. Let $V$ be the natural vertex set of $S$ and let $\wt H' \subset \wt G$ be the preimage of $\sigma \union V$ under the collapse map $\wt G \to S$. Then clearly $\wt H \subset \wt H'$ and there is a map $\wt G \to T$ that collapses each component of $\wt H'$ to a point, and so $T$ and $\wt G / \wt H'$ are equivalent. 
\end{proof}

Suppose that $T$ is a free splitting and that $a \cdot x$ denotes the image of $x \in T$ under the action of $a \in F_n$. For any $\Phi \in \Aut(F_n)$, define a new free splitting $T^\Phi$ with the same underlying tree $T$ by having the image of $x$ under the new action of $a$ be $\Phi(a) \cdot x$. If $\Phi_1$ and $\Phi_2$ determine the same element $\phi \in \Out(F_n)$ then they differ by an inner automorphism and $T^{\Phi_1}$ and $T^{\Phi_2}$ are equivalent free splittings. We therefore have a well defined right action of $\Out(F_n)$ on $\fscn$ taking $T$ to $T^\phi$ represented by either of $T^{\Phi_1}$ or $T^{\Phi_2}$. The right action equation $T^{\phi\psi} = (T^\phi)^\psi$ is easily checked. Note that if $a \in F_n$ acts elliptically on $T$ then $\Phi^{-1}(a)$ acts elliptically on $T^\Phi$. 

\newcommand\pre[1]{{}^{#1\!}}

We can express this action in the language of marked graph pairs as follows. Suppose that $G$ is a marked graph with marking $\rho \from R_n \to G$. Let $f \from G \to G$ be a homotopy equivalence representing the outer automorphism $\phi$, i.e.\ there is a homotopy equivalence $\Phi \from R_n \to R_n$ inducing an automorphism $F_n \approx \pi_1 R_n \xrightarrow{\Phi_*} \pi_1 R_n \approx F_n$ that represents $\phi$, such that $\rho\Phi$ and $f \rho$ are homotopic. Let $\pre{f}G = G^\phi$ be the marked graph with underlying graph $G$ and marking $f \rho$. For any subgraph $H \subset G$ defining a marked graph pair $(G,H)$, the marked graph pair $(\pre{f}G,H)=(G^\phi,H)$ is denoted $\pre{f}(G,H) = (G,H)^\phi$ and the free splitting that this pair determines is denoted $\pre{f}\langle G,H\rangle = \langle G,H\rangle^\phi$. When one is given a pair of self-homotopy equivalences $f,g \from G \to G$ representing $\phi,\gamma \in \Out(F_n)$, then one has action equations $\pre{fg}G = \pre{f}(\pre{g} G) = \pre{f}G^{\gamma} = (G^\phi)^\gamma = G^{\phi \gamma}$ and similarly for marked graph pairs; this is the reason for using pre-superscript notation.

Consider a homotopy equivalence of a marked graph pair $f \from (G,H) \to (G,H)$. However $f$ may change the marking on $G$, it preserves marking in the context of $f \from G \to \pre{f}G$ and of $f \from (G,H) \to \pre{f}(G,H)$. Given two such homotopy equivalences $f$ and $g$, we also have the following marking preserving maps: $g \from \pre{f}(G,H) \to \pre{gf}(G,H)$; and $gf \from (G,H) \to \pre{gf} (G,H)$. These facts are important later in several applications of Definition~\ref{defn:relation}.

\section{Theorem \ref{trichotomy}: Dynamics of elements of $\Out(F_n)$ on~$\fscn$}

In this section we prove the first of our three main results, the trichotomy theorem.

\vspace{,1in}

\noindent {{\bf Theorem~\ref{trichotomy}.} \emph{ The following hold for all $\phi \in \Out(F_n)$.}
\begin{enumerate}
\item \emph{The action of $\phi$ on $\fscn$ is loxodromic if and only if some element of $\L(\phi)$ fills.} 
\item \emph{If action of $\phi$ on $\fscn$ is not loxodromic then the action 
has bounded orbits.}
\item \emph{The action of some iterate of $\phi$ on $\fscn$ fixes a point (in fact a vertex) if and only if $\L(\phi)$ does not fill.}
\end{enumerate}
\noindent
The proof of Theorem~\ref{trichotomy} is spread across subsections representing a case analysis:
\begin{description}
\item[Section~\ref{SectionPeriodicVertex}:] If $\L(\phi)$ does not fill if and only if $\phi$ has a periodic vertex. 
\item[Section~\ref{SectionNonLoxImpliesBddOrbits}:] If no element of $\L(\phi)$ fills then $\phi$ has bounded orbits. 
\item[Section~\ref{SectionLoxChar}:] If some element of $\L(\phi)$ fills then $\phi$ is loxodromic.
\end{description}
\noindent
Also, Section~\ref{SectionFillingLamsLemmas} contains facts about filling laminations needed for Section~\ref{SectionLoxChar}.

\begin{ex} \label{filling reducible} If $\phi$ is fully irreducible then the unique element of $\L(\phi)$ fills so we are in case \pref{item:loxodromic}. It is also easy to construct reducible examples in which an element of $\L(\phi)$ fills. Let $G$ be a marked graph with an invariant subgraph $G_1 \subset G $ and unique vertex $v$ where $G_1$ is a rose of rank $m\ge 2$ and $H_2 = G \setminus G_1$ is a rank two rose with edges $A$ and $B$. Assume that the marking identifies the fundamental groups of $G_1 \subgroup G$ with $F_m \subgroup F_{m+2}$. Choose a closed path $\sigma \subset G_1$ based at $v$ such that the conjugacy class determined by $\sigma$ fills $F_{m}$. Define $\fG$ to be the identity on $G_1$ and 
$$A \mapsto A\sigma \bar B \sigma B \qquad B \mapsto B\sigma A \sigma \bar B \sigma B
$$
and let $\phi$ be the outer automorphism determined by $f$.
Then $\fG$ is a relative train track map, and $H_2$ is an \eg\ stratum with an associated lamination $\Lambda$. For each $k\ge 0$, $f^k_\#(B)$ is an initial subpath of $f_\#^{k+1}(B)$. The singular ray $R_B$ determined by $B$ is the union of the increasing sequence $$B \subset f_\#(B) \subset f_\#^2(B) \subset \ldots$$ The line $L = R_B^{-1} \sigma R_B$ is a weak limit of the subpaths $f_\#^k( \bar B\sigma B)$ of $f^{k+1}_\#(B)$ and so is a leaf of $\Lambda$. Any free factor $\F$ that carries $L$ also carries the line $L \diamond L = R_B^{-1} \sigma^2 R_B$ obtained by concatening $L$ with itself and then tightening. Similarly, $\F$ carries each $R_B^{-1} \sigma^m R_B$. Since $\sigma$ is a weak limit of these lines, $\sigma$ is carried by $\F$. Thus the smallest free factor $\F$ that carries $\Lambda$ properly contains $[F_{m}]$. Corollary 3.2.2 of \BookOne\ implies that $\phi$ does not preserve any co-rank one free factor that contains $[F_m]$ so $\F = [F_{m+2}]$ and $\Lambda$ fills. 

Note that in Example~\ref{filling reducible}, if $\theta \in \Out(F_{m+2})$ is represented by an automorphism $\Theta $ that fixes each element of the subgroup $\<A,B,\sigma\>$ then $\theta$ is represented by a homotopy equivalence of $G$ that commutes with $f$ up to homotopy rel $v$. In this case, $\theta$ commutes with $\phi$ and so is contained in the stabilizer of $\Lambda$; see Examples~\ref{linear example} and \ref{surface example}.
\end{ex}


\begin{ex}\label{ExampleBddNoPeriodic}
We can modify the above example to create an element satisfying the conclusion of \pref{item:bounded} but not satisfying \pref{item:periodic vertex}, that is, a $\phi \in \Out(F_n)$ acting with bounded orbits on $\fscn$ but with no periodic points. Define $G'$ from $G$ by adding a third stratum $H_3 = G' \setminus G$ consisting of two loops $A'$ and $B'$ attached to the unique vertex of $G$. The marking identifies $G \subset G'$ with $F_{m+2} \subgroup F_{m+4}$. Extend $f$ to $f' \from G' \to G'$ by 
$$ A' \mapsto A' \sigma \bar B' \sigma B' \qquad B' \mapsto B'\sigma A' \sigma \bar B' \sigma B'$$
Then $f'\from G' \to G'$ is a train track map with \eg\ strata $H_2$ and $H_3$ and associated laminations $\Lambda_1$ and $\Lambda_2$. As in Example~\ref{filling reducible}, $\F(\Lambda_1)$ carries $[F_m]$ and the conjugacy classes determined by the loops $A$ and $B$; and $\F(\Lambda_2)$ carries $[F_m]$ and the conjugacy classes determined by the loops $A'$ and $B'$. If $\F$ is a free factor system that carries both $\Lambda_1$ and $\Lambda_2$ then the component of $\F$ that carries $\Lambda_1$ and the component of $\F$ that carries $\Lambda_2$ both carry $[F_m]$ and so must be equal. Thus $\F$ is a single free factor that carries conjugacy classes that generate $H_1(F_{m+4})$. It follows that $\F$ is not a proper free factor and so $\L(\phi) = \{\Lambda_1,\Lambda_2\}$ fills even though neither $\Lambda_1$ nor $\Lambda_2$ fills. 
\end{ex}

\subsection{$\L(\phi)$ does not fill if and only if $\phi$ has a periodic vertex.}
\label{SectionPeriodicVertex}

In general, a simplicial automorphism of a simplicial complex has a periodic point $x$ if and only if it has a periodic vertex $v$, namely any vertex of the simplex whose interior contains~$x$. This applies in particular to elements of $\Out(F_n)$ acting on $\fscn$. The following lemma characterizes this behavior.

\begin{lemma} \label{periodic vertex} For all $\phi \in \Out(F_n)$, the following are equivalent.
\begin{enumerate}
\item[(a)] The action of some iterate of $\phi$ on $\fscn$ fixes a vertex.
\item[(b)] The action of some iterate of $\phi$ on $\fscnp$ fixes a vertex.
\item[(c)] $\L(\phi)$ does not fill.
\end{enumerate}
\end{lemma}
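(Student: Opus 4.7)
My plan is to prove (a)$\Leftrightarrow$(b) first, then (a)$\Leftrightarrow$(c). The equivalence (a)$\Leftrightarrow$(b) is routine: every vertex of $\fscn$ is a vertex of $\fscnp$, giving (a)$\Rightarrow$(b); conversely, a vertex of $\fscnp$ is either an original vertex of $\fscn$ or the barycenter of a higher-dimensional simplex $\Delta$, and an iterate of $\phi$ fixing such a barycenter preserves $\Delta$, hence permutes the finite vertex set of $\Delta$, so a further iterate fixes a vertex.

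For (c)$\Rightarrow$(a), I pass to a rotationless iterate (which does not change either hypothesis, since the free factor support of $\L(\phi^m)$ equals that of $\L(\phi)$ for rotationless $\phi^m$: any $\Lambda' \in \L(\phi^m) \setminus \L(\phi)$ is a piece of a $\phi$-orbit whose union lies in $\L(\phi)$, so its support is already carried by the support of $\L(\phi)$). The non-filling hypothesis yields a proper $\phi$-invariant free factor system $\F$ equal to the free factor support of $\L(\phi)$. By the \textbf{(Filtration)} property of CTs, choose a CT $f:G\to G$ for $\phi$ with $\F = [G_j]$ as a filtration element. I claim no stratum $H_i$ with $i>j$ is EG: otherwise its associated $\Lambda_i \in \L(\phi)$ would satisfy $\supp(\Lambda_i) \sqsubset \F = [G_j] \sqsubset [G_{i-1}]$, so the free factor support of $\Lambda_i$ unioned with the lines in $G_{i-1}$ would equal $[G_{i-1}]$, contradicting the EG characterization $[G_i] \ne [G_{i-1}]$. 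Hence the top stratum $H_N = \{E\}$ is an NEG edge with $f(E) = E\cdot u$, $u \subset G_{N-1}$, and $(G, G_{N-1})$ has co-edge one. The map $f$ itself witnesses $(G, G_{N-1}) \sim (G, G_{N-1})^f$ via Definition~\ref{defn:relation}: markings are preserved by construction of $G^f$; $E$'s endpoints lie in $G_{N-1}$ so there are no natural vertices of $G - G_{N-1}$ to match; and $f(E) = E\cdot u$ has the required form $\mu' E' \nu'$. So $\phi$ fixes the vertex $\<G, G_{N-1}\>$.

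For (a)$\Rightarrow$(c), after passing to a rotationless iterate I assume $\phi$ itself fixes a vertex $v = \<G, H\>$ with $G\setminus H = \{E\}$, and invoke Definition~\ref{defn:relation} to replace the given representative by a topological representative $f:G\to G$ of $\phi$ with $f(H)\subset H$ and $f(E) = \mu E^{\pm 1}\nu$ for some $\mu, \nu \subset H$. This yields the bound $EL_E(f_\#^n(\sigma)) \le EL_E(\sigma)$ for any path $\sigma$ and all $n$, where $EL_E$ counts $E$-crossings: each $E$ in $\sigma$ produces one $E^{\pm 1}$ in $f(E)$ and each $H$-edge contributes no $E$. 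For any $\Lambda \in \L(\phi)$ and generic leaf $\ell$, bi-recurrence forces $EL_E(\ell) \in \{0, \infty\}$. To rule out $EL_E(\ell) = \infty$, take a long subpath $\gamma$ of $\ell$ with endpoints on $H$-vertices and extend $\gamma$ by tail rays in components of $H$ to a line $\ell_0$ with $EL_E(\ell_0) = EL_E(\gamma) < \infty$; for $\gamma$ long enough, the set $U = \{\ell' : \gamma \subset \ell'\}$ is an attracting neighborhood of $\ell$ containing $\ell_0$. Then $EL_E(\phi^n(\ell_0)) \le EL_E(\ell_0)$ for all $n$, and a subsequence of $\phi^n(\ell_0)$ converges in $\B$ to a generic leaf $\ell'$ of $\Lambda$, using density of generic leaves in $\Lambda$. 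Lower semi-continuity of $EL_E$ on $\B$ (any finite set of crossings of the limit persists in nearby lines) gives $EL_E(\ell') < \infty$, hence $EL_E(\ell') = 0$ by bi-recurrence, so $\ell' \subset H$; density of $\ell'$'s ends in $\Lambda$ then forces $\Lambda$ to be carried by $\F := [H]$. Summing over $\L(\phi)$ gives that it is carried by the proper $\F$, so does not fill.

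The main obstacle is the (a)$\Rightarrow$(c) direction: constructing the approximating line $\ell_0$ of finite $E$-count inside an attracting neighborhood of a generic leaf, extracting a generic leaf from the subsequential limits of $\phi^n(\ell_0)$, and passing the bound to the limit via lower semi-continuity of $EL_E$ on $\B$. Some care is also needed to use Definition~\ref{defn:relation} to replace the initial representative by one with the good structure $f(H) \subset H$ and $f(E) = \mu E^{\pm 1}\nu$.
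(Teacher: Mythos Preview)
Your proof is correct, and the implications (a)$\Leftrightarrow$(b) and (c)$\Rightarrow$(a) follow essentially the paper's own arguments.

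For (a)$\Rightarrow$(c) you take a genuinely different, more elaborate route. The paper (proving the slightly stronger (b)$\Rightarrow$(c)) works with circuits rather than lines: once one has a representative $h$ with $h(E)=\mu E\nu$ for each edge $E$ of $G\setminus H$, the number of $G\setminus H$-edges crossed by $h^i_\#(\sigma)$ is non-increasing for any \emph{circuit} $\sigma$; since every $\Lambda\in\L(\phi)$ weakly attracts at least one circuit, it follows immediately that each $\Lambda$ is carried by $[H]$. This sidesteps the construction of the auxiliary line $\ell_0$, the convergence to a generic leaf, and the lower-semicontinuity step. Your line-based argument is valid---the inequality $EL_E(f_\#^n(\sigma))\le EL_E(\sigma)$ holds for lines just as for circuits, and convergence $\phi^n(\ell_0)\to\ell$ combined with lower semicontinuity of $EL_E$ does yield the contradiction---but the circuit argument reaches the same conclusion in a single stroke because circuits are finite. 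One small point worth tightening: your set $U=\{\ell':\gamma\subset\ell'\}$ is an open neighborhood but not automatically an \emph{attracting} neighborhood in the sense required; what you need (and what suffices) is that for $\gamma$ long enough $U$ is contained in some attracting neighborhood $V$, so that $\ell_0\in V$ and hence $\phi^n(\ell_0)$ enters every neighborhood of the generic leaf $\ell$.
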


\begin{proof} It is obvious that (a) implies (b). 

 Assuming that (b) holds we will prove (c). There exist $k \ge 1$ and a marked graph pair $(G,H)$ such that $\langle G,H\rangle ^{\phi^k} = \langle G,H\rangle$. Equivalently, there is a homotopy equivalence $h \from (G,H) \to (G,H) $ so that $h \from (G,H) \to (G,H)^{\phi^k}$ satisfies Definition~\ref{defn:relation}. In particular, $h \from G \to G$ represents $\phi^k$. Note that for all $m \ge 1$, Definition~\ref{defn:relation} is satisfied by $h \from (G,H)^{\phi^{(m-1)k}} \to (G,H)^{\phi^{mk}}$ and hence also by $h^m \from (G,H) \to (G,H)^{\phi^{mk}}$. Since the bijection that $h$ induces on the natural edges of $G\setminus H$ has finite order, we may choose $m > 1$ so that the bijection induced by $h^m$ on natural edges is the identity. Choosing such an $m$ and replacing $\phi$ by $\phi^{mk}$ and $h$ by $h^m$ we may assume that $h$ represents $\phi$ and that $h(E) = \mu E \nu$ for each edge $E$ of $G \setminus H$ and for paths $\mu$ and $\nu$, dependent on $E$, that are either trivial or contained in $H$. It follows that $\L(\phi)$ is carried by $[H]$ and does not fill. (One way to see this is to note that for each circuit $\sigma \subset G$ there is a uniform bound to the number of times that an edge in $G \setminus H$ is crossed by $h^i_\#(\sigma)$. This proves that if $\sigma$ is weakly attracted to $\Lambda \in \L(\phi)$ then $\Lambda$ is carried by $[H]$. Since $\sigma$ is arbitrary and every element of $\L(\phi)$ weakly attracts at least one circuit, $\L(\phi)$ is carried by $[H]$.) 
 
To prove that $(c)$ implies $(a)$, suppose that $\L(\phi)$ does not fill. After replacing $\phi$ with an iterate we may assume that $\phi$ is rotationless. Since $\L(\phi)$ is $\phi$-invariant we may apply Theorem~4.28 of \recognition\ to conclude that $\phi$ is represented by a \ct\ $\fG$ such that $\L(\phi)$ is carried by a proper $f$-invariant subgraph $G_i$ of $G$. The highest stratum $H_N$ in the filtration is therefore a single edge $E$ satisfying $f(E) = uEv$ where $u,v$ are paths in the $f$-invariant subgraph $G_{N-1}$. This proves that $f \from (G,G_{N-1}) \to (G,G_{N-1})^\phi$ satisfies Definition~\ref{defn:relation} and hence that $\<G,G_{N-1}\>$ is $\phi$-invariant.
\end{proof}

\subsection{When no element of $\L(\phi)$ fills, $\phi$ has bounded orbits.} 
\label{SectionNonLoxImpliesBddOrbits}

The next lemma gives a lamination criterion for verifying that an outer automorphism acts with bounded orbits on $\fscn$. 
This lemma immediately implies the ``only if'' direction of Theorem~\ref{trichotomy}~(1), and it reduces the proof of Theorem~\ref{trichotomy}~(2) to the ``if'' direction of~(1) which will be proved later.
 
\begin{lemma} \label{bounded orbit} If $\phi \in \Out(F_n)$ and if no single element of $\L(\phi)$ fills then the action of $\phi$ on $\fscn$ has bounded orbits. 
\end{lemma}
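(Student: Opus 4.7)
The plan is to reduce to a rotationless iterate of $\phi$ and then exhibit a bounded $\phi$-invariant subset of vertices in $\fscn$. Since $\L(\phi^K)=\L(\phi)$ for all $K\ge 1$, the hypothesis that each element of $\L(\phi)$ is carried by a proper free factor system passes to $\phi^K$. If we can show that the $\phi^K$-orbit of every vertex of $\fscn$ is bounded, then the $\phi$-orbit of any vertex is a union of finitely many bounded sets and so is bounded. So we may assume $\phi$ is rotationless, and let $\fG$ be a \ct\ representative with filtration $\emptyset = G_0 \subset G_1 \subset \cdots \subset G_N = G$.

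Next I would analyze the top stratum $H_N$. If $H_N$ is a single \neg\ edge $E$ with $f(E)=Eu$ and $u \subset G_{N-1}$, then $f$ exhibits $(G,G_{N-1})$ as a $\phi$-invariant marked graph pair via Definition~\ref{defn:relation}, so $\<G,G_{N-1}\>$ is a $\phi$-fixed vertex and the orbit has diameter zero (this is the argument at the end of the proof of Lemma~\ref{periodic vertex}). The substantive case is when $H_N$ is an \eg\ stratum with associated lamination $\Lambda_N \in \L(\phi)$, whose free factor support $\F(\Lambda_N)$ is proper by hypothesis. By the lamination/\eg-stratum correspondence for \cts, $[G_N]=[F_n]$ is the free factor support of the union of lines of $\Lambda_N$ together with lines carried by $[G_{N-1}]$; so $\F(\Lambda_N)$ and $[G_{N-1}]$ together span $[F_n]$ even though neither does so alone, and we cannot produce an invariant vertex from either one directly.

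For the \eg\ top stratum case, the plan is to use the finite collection $\{\F(\Lambda) : \Lambda\in\L(\phi)\}$ of proper, $\phi$-invariant free factor systems, together with their iterated meets under $\wedge$, to produce a bounded $\phi$-invariant subset of vertices of $\fscn$. Concretely, for each such invariant free factor system I would realize it as $[H]$ for a natural subgraph $H$ of a marked graph, producing a corresponding vertex of $\fscn$; Lemma~\ref{collapse maps} is then used to control how these vertices are related by collapses, and the rotationless hypothesis together with \ct\ combinatorics would bound the $\fscn$-distance between each such vertex and its $\phi$-image uniformly in the power.

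The main obstacle is the \eg\ top stratum case, exemplified by Example~\ref{ExampleBddNoPeriodic}, in which no single vertex of $\fscn$ is $\phi$-periodic yet the orbits remain bounded. Bounding the $\phi^k$-orbit there requires genuine $\fscn$-distance estimates, presumably drawing on the surgery and projection machinery from Part~I~\FSHyp, combined with detailed analysis of how $f^k$ acts on the marked graph pair structure when the top \eg\ stratum has exponentially growing but non-filling lamination. Controlling this interaction, so that the exponential expansion on $H_N$ does not translate into unbounded displacement in $\fscn$, is the technical heart of the argument.
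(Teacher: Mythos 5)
Your reduction to a rotationless iterate and your treatment of the case where the top stratum is a single \neg\ edge (equivalently, after Lemma~\ref{periodic vertex}, the case where $\L(\phi)$ does not fill) are fine and match the paper. But in the substantive case --- $\L(\phi)$ fills while no single element of $\L(\phi)$ fills --- you offer only a plan, and the plan points in the wrong direction. Producing vertices of $\fscn$ from the proper invariant free factor systems $\F(\Lambda)$, $\Lambda\in\L(\phi)$, and their meets cannot by itself work: a $\phi$-invariant free factor system does not determine a $\phi$-invariant (or even finitely-$\phi$-orbited) free splitting, and Example~\ref{ExampleBddNoPeriodic} shows that in this case there may be no periodic vertex at all, so no ``bounded $\phi$-invariant subset of vertices'' of the kind you describe exists to be found. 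What has to be proved instead is a displacement bound for a single vertex that is uniform in the power $k$, and you explicitly defer that (``presumably drawing on the surgery and projection machinery from Part~I''), which is not where the argument lives; the Part~I machinery is not used here.

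The missing idea is the structural statement Lemma~\ref{invariant subgraphs}: after passing to a rotationless power there is a topological representative $f\from G\to G$ and proper $f$-invariant subgraphs $K_1,K_2$ with $G=K_1\cup K_2$, $K_1$ a core subgraph with frontier vertices fixed, every component of $K_2$ noncontractible, the core $J_2$ of $K_2$ invariant, and each non-fixed edge of $K_2\setminus J_2$ of the form $f(E)=Eu$ with $u\subset J_2$. One then factors $f=f_2\composed f_1$, where $f_1$ agrees with $f$ on $K_1$ and is the identity elsewhere and $f_2$ is the identity on $K_1$ and agrees with $f$ elsewhere, and checks via Definition~\ref{defn:relation} that $\<G,K_1\>=\<G,K_1\>^{f_1^k}$ and $\<G,J_2\>^{f_1^k}=\<G,J_2\>^{f^k}$. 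Setting $J_3=\core(K_1\cap J_2)$, Lemma~\ref{collapse maps} then yields an explicit path of length at most four in $\fscnp$ from $\<G,J_3\>$ to $\<G,J_3\>^{f^k}$ for every $k$, which is the uniform bound. Note that the intermediate vertices $\<G,J_3\>^{f_1^k}$ do move as $k$ grows --- boundedness comes from this four-step bridge, not from invariance. The proof of Lemma~\ref{invariant subgraphs} itself (the case analysis on geometricity of the two topmost laminations and the sliding/folding modification when a height-$r$ Nielsen path obstructs invariance of $G\setminus H_r^z$) is the technical heart you identified but did not supply, so the proposal has a genuine gap at exactly the point where the theorem is hard.
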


The idea of the proof is to show that a rotationless power of $\phi$ has a topological representative having the structure of Example~\ref{ExampleBddNoPeriodic}, as made precise in the lemma to follow. After stating the lemma, we apply it to prove that $\phi$ has bounded orbits on $\fscn$, then we prove the lemma.
 
 \begin{lemma} \label{invariant subgraphs} If $\phi \in \Out(F_n)$ is rotationless, if $\L(\phi)$ fills, and if no single element of $\L(\phi)$ fills, then there is marked graph $G$, a homotopy equivalence $\fG$ representing $\phi$, and $f$-invariant proper core subgraphs $K_1 $ and $K_2$ such that: 
\begin{enumerate} 
\item \label{item:KOneCupKTwo}
$G = K_1 \cup K_2$.
\item \label{item:fixed on frontier} The frontier vertices $\Fr(K_1)$ are fixed by $f$.
\end{enumerate}
\end{lemma}
 
\begin{proof}[Proof of Lemma~\ref{bounded orbit}, assuming Lemma~\ref{invariant subgraphs}:] \,\,\,
We may assume by Lemma~\ref{periodic vertex} that $\L(\phi)$ fills.  After replacing $\phi$ with an iterate if necessary, we may also assume that $\phi$ is rotationless.   Choose $\fG$ and $K_1,K_2 \subset G$ as in Lemma~\ref{invariant subgraphs}. Let $K_3$ be the core of $K_1 \cap K_2$, let $V$ be the set of natural vertices of $G$ and let $H_i = K_i \cup V$ for $i=1,2,3$. It suffices to show that for all $k \ge 1$ there is a path in $\fscnp$ of length at most four between $\<G,H_3\>$ and $\pre{f^k}\<G,H_3\>$. 
Since \pref{item:KOneCupKTwo} and \pref{item:fixed on frontier} remain true when $f$ is replaced by $f^k$, we may assume $k=1$.
 
Define $f_1 \from G \to G$ to agree with $f$ on $K_1$ and to be the identity on $G \setminus K_1$. Define $f_2:G \to G$ to be the identity on~$K_1$ and to agree with $f$ on $G \setminus K_1$. Continuity of these maps follows from \pref{item:fixed on frontier}. The restriction $f_1 \from K_1 \to K_1$ is a homotopy equivalence by Lemma 6.0.6 of \BookOne.  It follows that $f_1 : G \to G$ is a homotopy equivalence.  To see this, choose $x \in \Fr(K_1)$ and note that each element of $\pi_1(G,x)$ is uniquely represented by a closed path based at $x$ and decomposed into an alternating  concatenation of paths $\alpha_i \subset K_1$ and $\beta_i \subset G \setminus K_1$, all with endpoints in $\Fr(K_1)$.  This decomposition is a splitting for $f_1$.  Moreover,  for any $x,y \in \Fr(K_1)$, $f_1$ induces a bijection of the set of homotopy classes  of paths in $K_1$ [resp. $G \setminus K_1$] with one endpoint at $x$ and the other at $y$.   (This is obvious for $G \setminus K_1$; for $K_1$ it is a consequence of the fact   that $f \restrict K_1$ induces an automorphism of $\pi_1(K_1,x)$.) It follows that $f_1$ induces an automorphism of  $\pi_1(G,x)$ and is hence a homotopy equivalence.  The details are left to the reader.  Using that  $f = f_2 f_1 : G \to G$ is a homotopy equivalence, it follows that    $f_2$ is a homotopy equivalence.



To complete the proof it suffices to exhibit a sequence of five vertices in $\fscnp$ of the form
$$\<G,H_3\> \longrightarrow \<G,H_1\> = \pre{f_1}\<G,H_1\> \longleftarrow \pre{f_1}\<G,H_3\> \longrightarrow\pre{f_1}\<G, H_2\> = \pre{f}\<G, H_2\> \longleftarrow \pre{f}\<G, H_3\> 
$$
such that vertices in this sequence that are connected by an arrow are either equal or bound an edge in $\fscnp$. The existence of this sequence is justified as follows.

The subgraphs $K_1, K_2, K_3$ are proper core subgraphs and so $H_1, H_2,H_3$ are natural subgraphs of~$G$ that contain all natural vertices.  Thus each of the seven vertices depicted in the above sequence is well defined. Since each arrow is induced by an inclusion of natural subgraphs, the pair of vertices that each arrow connects are either equal or bound an edge in $\fscnp$ by Lemma~\ref{LemmaCollapseMaps}. For the two equalities we apply Lemma~\ref{equivalent free splittings} and Definition~\ref{defn:relation} to $f_1 : (G,H_1) \to \pre{f_1}(G,H_1)$ and $f_2 : \pre{f_1}(G,H_2) \to \pre{f_2 f_1}(G,H_2) = \pre{f}(G,H_2)$ respectively. Both of these maps preserve marking (see the final paragraph of Section~\ref{SectionFSAndMGP}). Since each edge of $G \setminus H_1 = G \setminus K_1$ is fixed by $f_1$, it follows that $f_1 : (G,H_1) \to \pre{f_1}(G,H_1)$ satisfies Definition~\ref{defn:relation} and so $\<G,H_1\> = \pre{f_1}\<G,H_1\>$ by Lemma~\ref{equivalent free splittings}. The map $f_2 : \pre{f_1}(G,H_2) \to \pre{f}(G,H_2)$ 
also satisfies Definition~\ref{defn:relation}, because each edge $E \subset G \setminus H_2 =  E \subset G \setminus K_2$ is in $K_1$ and is therefore fixed by $f_2$, and so $\pre{f_1}\<G, H_2\> = \pre{f}\<G, H_2\>$.
\end{proof}

\begin{remark} A version of the above \lq distance four\rq\ argument is used in \FSHyp\ to correct an error in an early version of that paper; see the Remark between Steps 2 and 3 of the proof of Proposition 6.5 of \FSHyp.
\end{remark}

We will need the following facts in the proof of Lemma~\ref{invariant subgraphs}.

\begin{fact}\label{FactNielsenPathHierarchy}
In any \ct, for any stratum $H_j$, any height $j$ indivisible Nielsen path $\rho$ decomposes as an alternating concatenation of maximal subpaths in $H_j$ and Nielsen subpaths of \hbox{height~$<j$}. As a consequence, any one-edge subpath $E$ of $\rho$ of height $i<j$ is contained in a height $i$ Nielsen subpath of $\rho$. 
\end{fact}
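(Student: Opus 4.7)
The plan is to treat the cases where $H_j$ is NEG or EG separately, and then deduce the consequence by induction on $j - i$.

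\textbf{NEG case.} If $H_j$ is NEG then $H_j$ consists of a single edge $E$. The classification recalled in the \textbf{Twist paths} paragraph of the excerpt shows that, up to orientation, $\rho = E w^p \bar E$ for some $p \neq 0$, where $w$ is the closed root-free Nielsen twist path of $E$ lying in $G_{j-1}$. The required alternating decomposition is then $\rho = E \cdot w^p \cdot \bar E$: the maximal $H_j$-subpaths are the single edges $E$ and $\bar E$, and $w^p$ is a Nielsen path of height~$<j$.

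\textbf{EG case.} If $H_j$ is EG, write the natural decomposition $\rho = \gamma_0 \alpha_1 \gamma_1 \alpha_2 \cdots \alpha_k \gamma_k$ where each $\alpha_l$ is a maximal nontrivial subpath of $\rho$ in $H_j$ and each $\gamma_l$ is a (possibly trivial) subpath in $G_{j-1}$. The task is to show each nontrivial $\gamma_l$ is a Nielsen path. I would appeal to the \ct\ structure theory for indivisible Nielsen paths of EG height (Section 4.1 of \recognition and Definitions 1.27--1.29 of \SubgroupsOne): $\rho$ is completely split, and the units of its complete splitting are of four types --- single $H_j$-edges, indivisible Nielsen paths of height $<j$, exceptional paths of height $<j$, and taken connecting paths in zero strata enveloped by $H_j$. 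Since complete splittings are preserved by $f_\#$ and $\rho$ is Nielsen, applying $f_\#$ permutes this family of units within the unique complete splitting of $\rho$; matching units by their endpoints and heights forces each lower-height unit to be $f_\#$-invariant, hence a Nielsen path. Grouping consecutive lower-height units lying between two maximal $H_j$-subpaths realizes each $\gamma_l$ as a concatenation of Nielsen paths, and hence as a Nielsen path of height~$<j$. The \textbf{main obstacle} is controlling the taken connecting paths in zero strata enveloped by $H_j$, which are not a priori Nielsen; the argument requires using the uniqueness and $f_\#$-invariance of the complete splitting of $\rho$ together with $f_\#(\rho) = \rho$ to pin down each such zero-stratum unit.

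\textbf{Consequence.} Given a one-edge subpath $E \subset \rho$ of height $i < j$, $E$ lies in some $\gamma_l$, which by the main claim is a Nielsen path of height $j' < j$. Induct on $j - i$: if $j' = i$ take $\gamma_l$ itself; otherwise $i < j' < j$, and since any Nielsen path in a \ct\ decomposes as a concatenation of indivisible Nielsen subpaths of heights $\leq j'$, some such iNp $\rho'$ contains $E$ and has height $j''$ with $i < j'' \leq j'$. Applying the inductive hypothesis to $\rho'$ produces the desired height $i$ Nielsen subpath of $\rho$ containing $E$.
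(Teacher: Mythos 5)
Your NEG case is fine and matches the paper, which simply quotes the \ct\ axiom \textbf{(NEG Nielsen Paths)} (equivalently the twist-path description you use). The EG case, however, has a genuine gap, and it is exactly the point where the paper instead cites Lemma 4.24 of \recognition. Your argument is built on the claim that a height~$j$ \iNp\ $\rho$ is completely split into single $H_j$-edges, lower-height \iNp s, exceptional paths and zero-stratum paths. No such splitting exists: in a \ct, an indivisible Nielsen path admits no nontrivial splitting at all. Indeed, if $\rho=\sigma_1\cdot\sigma_2$ were a splitting with both terms nontrivial, then $\abs{f^k_\#(\sigma_1)}+\abs{f^k_\#(\sigma_2)}=\abs{\rho}$ is bounded, so each $f^k_\#(\sigma_i)$ is eventually a periodic, hence (period one in a \ct) genuine, Nielsen path, and $\rho=f^k_\#(\sigma_1)\cdot f^k_\#(\sigma_2)$ would contradict indivisibility. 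This is precisely why EG-height \iNp s are themselves allowed as single terms of complete splittings; the \textbf{(Completely split)} property only concerns images $f(E)$ of edges, not $\rho$. Moreover, even granting some splitting, $f_\#$ does not ``permute units'': the complete splitting of an image refines the image of the splitting (an EG edge maps to a long completely split path), so the matching-by-endpoints step is not available. Finally, the zero-stratum issue you flag as the main obstacle is never resolved in your sketch; in fact the cited Lemma 4.24 shows that when an EG stratum $H_j$ carries an \iNp\ there are no zero strata enveloped by $H_j$ at all ($H^z_j=H_j$), and it is that lemma which directly supplies the alternating decomposition of $\rho$ into maximal $H_j$-subpaths and lower-height Nielsen subpaths. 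So the EG case cannot be recovered by your route; it needs the structure theory of EG-height \iNp s from \recognition\ (or an argument of comparable depth), after which your inductive deduction of the one-edge consequence is unobjectionable.
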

\noindent
This fact follows from (\noneg\ Nielsen Paths) in the case when $H_j$ is \neg, and from \cite{FeighnHandel:recognition} Lemma 4.24 when $H_j$ is \eg.

\vspace{.1in}


\begin{proof}[Proof of Lemma~\ref{invariant subgraphs}.] Let $\hs$ be the set of $\Lambda \in \L(\phi)$ for which there exists a \ct\ representing $\phi$ whose top stratum corresponds to $\Lambda$. Note that $\hs\ne \emptyset$ because $\L(\phi)$ fills, and so the highest stratum in any \ct\ representing $\phi$ is \eg. Moreover, $\hs$ contains at least two elements because the smallest free factor system carrying any element of $\L(\phi)$ is proper and so is realized by a proper filtration element in some \ct\ representing~$\phi$. Note also that each element of $\hs$ is topmost, meaning that it is not contained in any other element of~$\L(\phi)$. 
 
Choose $\Lambda_1 \ne \Lambda_2 \in \hs$, let $\A'$ denote the free factor support of the set $\L(\phi) - \{\Lambda_1,\Lambda_2\}$, and choose a \ct\ $\fG$ representing $\phi$ and having a core filtration element $G_u$ such that $\A' = [G_u]$. The highest stratum $H_N$ must be an \eg\ stratum corresponding to one of~$\Lambda_1,\Lambda_2$. We choose the notation so that $H_N$ corresponds to $\Lambda_2$. Furthermore, if exactly one of these two laminations is geometric we choose $\Lambda_2$ to be that lamination, which can be accomplished by insisting that the free factor support of the set $\L(\phi)-\{\Lambda_2\}$ is also realized by a core filtration element. For now these are the only properties of $f$ we use. Later we add a further constraint to $f$ in the case that both $\Lambda_1$ and $\Lambda_2$ are geometric. 

Let $H_r$ be the stratum corresponding to~$\Lambda_1$, and let $H_s$ with $s<r$ be the next lower irreducible stratum, so the filtration element $G_s = G_r \setminus H^z_r$ has no contractible components (Lemma~4.15 of \recognition). We have $G_u \subset G_s \subsetneq G_r \subsetneq G_N=G$. Since $\Lambda_1 \not\subset \Lambda_2$ it follows from Corollary~3.1.11 of \BookOne
that no edge of $H_N$ is weakly attracted to~$\Lambda_1$ and hence no term in the complete splitting of an $f_\#$-iterate of an edge of $H_N$ is weakly attracted to~$\Lambda_1$. For any $i$ such that $r<i<N$ note that $H_i$ is not an \eg\ stratum.

We next analyze weak attraction properties of edges $E$ of height $>r$ by proving the following:
\begin{description}
\item[($i$)] If $E$ is non-fixed and non-linear then $E$ occurs as a term in the complete splitting of $f_\#^k(E')$ for some edge $E'$ of $H_N$ and some $k \ge 1$. 
\item[($ii$)] $E$ is not weakly attracted to $\Lambda_1$. 
\item[($iii$)] If a term $\mu$ in the complete splitting of $f^k_\#(E)$ crosses an edge of $H_r$ then $\mu$ is a Nielsen path or an exceptional path. 
\end{description} 
\noindent If $E$ belongs to a zero stratum then ($i$) follows from (Zero Strata) and the fact that $H_N$ is the only \eg\ stratum with height greater than $r$. If $E$ is a non-fixed non-linear irreducible edge then $E$ is crossed by $\Lambda_2$ because $\L(\phi)$ fills. It follows that $E$ is contained in a term $\nu$ of the complete splitting of $f_\#^k(E')$ for some edge $E'$ of $H_N$ and some $k \ge 1$. Item ($i$) then follows from Fact~\ref{FactNielsenPathHierarchy} and (\noneg\ Nielsen Paths). 
Item ($ii$) is obvious if $E$ is fixed or linear; in the remaining cases ($ii$) follows from ($i$) and the fact that no edge $E'$ of $H_N$ is weakly attracted to $\Lambda_1$. To prove ($iii$) we must show that $\mu$ is neither a subpath of a zero stratum nor a single edge. The former is obvious and the latter follows from ($ii$). 

\medskip

We now verify conclusions (1)--(2) of Lemma~\ref{invariant subgraphs} under a special assumption. Let $K_1 = G_r$ and note that $K_1$ is an $f$-invariant core subgraph containing each edge of $H_r$, so $K_1$ contains the lamination $\Lambda_1$ (as realized in $G$). By \cite[Remark 4.9]{FeighnHandel:recognition} and (Zero Strata), each point in the frontier of $K_1$ is a principal vertex and is hence fixed by $f$. Item (2) is therefore satisfied. Next we formulate:

\begin{description}
\item[Special Assumption:] $G \setminus H^z_r$ is $f$-invariant.
\end{description}
\smallskip
Let $K'_2 = G \setminus H^z_r$, and let $K_2 = \core(K'_2)$. Note that $G = K_1 \union K'_2$. Since $K'_2$ is $f$-invariant and contains $H_N$, it follows that $K'_2$ contains $\Lambda_2$. Also $K'_2$ contains $G_u$ and so it contains each lamination in $\L(\phi) - \{\Lambda_1,\Lambda_2\}$. Thus $K'_2$, and hence its core $K_2$, contains each lamination in $\L(\phi) - \{\Lambda_1\}$. Since $\L(\phi)$ fills, $\Lambda_1$ must cross each edge of $G \setminus K_2$, and so each such edge is contained in $K_1$, proving~\pref{item:KOneCupKTwo}.


It remains to prove that $K_2$ is $f$-invariant. Let $G_{s'}$ be the highest core filtration element that is contained in $G_s$. Then $G_r$ is built up starting from $G_{s'}$ as follows: first add the \noneg\ edges comprising $G_s \setminus G_{s'}$ to form $G_s$; then add the zero strata, if any, that are enveloped by $H_r$ to form $G_{r-1}$; and finally add $H_r$. It follows that
$$K_2 = G_{s'} \union \bigl(\text{a subgraph of $G_s \setminus G_{s'}$}\bigr) \union (G \setminus G_r)
$$
Clearly $G_{s'}$ is $f$-invariant. If $E \subset G_s \setminus G_{s'}$ then the \ct\ defining property \textbf{(Periodic edges)} implies that $E$ is not fixed and so can be oriented so that $f(E) = Eu$ for some circuit $E$ that is necessarily contained in $G_{s'}$, and so if $E \subset K_2$ then $f(E) \subset K_2 \union G_{s'} \subset K_2$. It remains to prove that if $E \subset G \setminus G_r$ then $f(E) \subset K_2$. If $E$ is fixed this is obvious. If $E$ is linear then $f(E) = E u$ where $u$ is a circuit \cite[Lemma 4.21]{FeighnHandel:recognition}, and since this circuit is contained in $K'_2$, it must be contained in $K_2=\core(K'_2)$, implying that $f(E) \subset K_2$. Otherwise, by item ($i$) it follows that $f(E)$ is a concatenation of terms of the complete splitting of $f^{k+1}_\#(E')$ for some edge $E' \subset H_N$ and some $k \ge 1$, so $f(E)$ is a subpath of $\Lambda_2$ and hence is contained in~$K_2$. 

\bigskip

It remains to justify the \emph{Special Assumption}. We divide into three cases. In Case~1 the \emph{Special Assumption} is true, and in Case~2 it becomes true after imposing an additional constraint on the choice of the \ct\ $f \from G \to G$. But Case~3 requires careful handling, in that the \emph{Special Assumption} can fail. 

\paragraph{Case 1: $\Lambda_1$ is non-geometric and there are no Nielsen paths of height $r$.} \quad\\ By Fact~\ref{FactNielsenPathHierarchy}, no Nielsen path (and hence no exceptional path) crosses an edge of $H_r$. Applying item ($iii$) above, it therefore follows that if $E$ is an edge with height $> r$ then $f^k_\#(E)$ crosses no edge of $H_r$ for $k \ge 1$. If $f(E)$ intersects a zero stratum enveloped by $H_r$ then $f(E)$ is contained in that zero stratum because it does not cross any edges of $H_r$. It follows that $E$ must be contained in a zero stratum enveloped by $H_N$; but then some $f^k_\#$-image of some edge in $H_N$ would cross an edge of~$H_r$. This contradiction implies that that $f(E) \subset G \setminus H^z_r$. Knowing that $G_s=G_r \setminus H^z_r$ is $f$-invariant, it follows that $G \setminus H^z_r$ is $f$-invariant. 
 
\paragraph{Case 2: $\Lambda_1$ and $\Lambda_2$ are geometric.} The strata $H_r$ and $H_N$ are therefore both geometric. Lemma 4.24 of \recognition\ implies that $H_r^z = H_r$ (and so $s=r-1$) and $H^z_N = H_N$, and so to verify the \emph{Special Assumption} we must show that $G \setminus H_r$ is $f$-invariant. Let $\rho_r$ be the unique indivisible Nielsen path of height~$r$ in $G$. Since $H_r$ is geometric, the path $\rho_r$ is closed with base point $x$ in $G_r$ but not in $G_{r-1}$.

We claim that, after imposing an additional constraint on the choice of $f \from G \to G$, no edge of 
height~$>r$ is incident to~$x$. Assuming this claim holds, we verify the \emph{Special Assumption} by proving that for each edge $E$ of height~$>r$ the path $f(E)$ does not cross any edge of $H_r$. The path $f(E)$ must cross at least one edge of height~$>r$, because $H_N=H^z_N$ is the unique \eg\ stratum above $H_r$ and so, by (Zero Strata), each stratum above $H_r$ is irreducible. By ($iii$) each non-trivial maximal subpath of $f(E)$ in $H_r$ is an iterate of $\rho_r$ or $\rho_r^{-1}$ and so begins and ends at $x$. But no such subpath of $f(E)$ exists, for if it did then its complement in $f(E)$ would be nonempty, would have an endpoint on~$x$, and would cross an edge of height~$>r$ incident to $x$, a contradiction. 

We now prove the claim, first describing the additional constraint. For $i =1,2$, choose a \ct\ $f_i :G^i \to G^i$ representing $\phi$ in which $\Lambda_i$ corresponds to the highest stratum~$H^i_{N_i}$. Let $\rho_i$ be the unique indivisible Nielsen path in $G^i$ of height $N_i$, a closed path representing a conjugacy class in $F_n$ that we denote $c_i = [\rho_i]$. Also let $[\<\rho_i\>]$ denote the conjugacy class of the infinite cyclic subgroup of $F_n$ generated by $\rho_i$. We~have:
\begin{description}
\item [(a)] (\SubgroupsThree, Section 1.1, \emph{``Remark: The case of a top stratum''}) \quad\\ $[\<\rho_i\>]$ is a rank~$1$ element of the subgroup system $\A_\na(\Lambda_i)$ and $\A_\na( \Lambda_i) - \{[\<\rho_i\>]\}$ is carried by $[G^i_{N_i-1}]$.
\item[(b)] (\SubgroupsOne, Proposition 2.18~(2)) \,\, The conjugacy class $c_i$ is an element of a finite $\phi$-invariant set $\C_i$ of conjugacy classes such that $\F_\supp(\C_i) = \F_\supp(\Lambda_i)$ and such that $\C_i - c_i$ is carried by $[G^i_{N_i-1}]$.
\end{description}
\noindent 
Each individual element of $\L(\phi)$ and of $\C_1 \cup \C_2$ is $\phi$-invariant by Lemma 3.30 of \recognition. The following set is therefore $\phi$-invariant:
$$L = \bigl(\L(\phi) - \{\Lambda_1,\Lambda_2\}\bigr) \cup (\C_1 - c_1 ) \cup (\C_2 - c_2)
$$
Let $\A''$ denote the free factor support of $L$. Since $\L(\phi) - \{\Lambda_1,\Lambda_2\} \subset L$ it follows that $[G_u] = \A' \sqsubset \A''$. Applying the \ct\ existence theorem, we may therefore impose an additional constraint on $f \from G \to G$ by requiring that there is a core filtration element $G_t$ such that $[G_t] = \A''$. In $G$ we thus have $G_u \subset G_t$ representing $\A' \sqsubset \A''$.

With this constraint we prove the claim. The free factor system $[G^1_{N_1-1}]$ carries the set $(\L(\phi) - \Lambda_1) \union (\C_1 - c_1) \union \C_2$, and so $[G^1_{N_1-1}]$ carries $L$ implying that $[G_t]=\A'' \sqsubset [G^1_{N_1-1}]$. But $[G^1_{N_1-1}]$ does not carry $c_1$ which is represented in $G^1$ by the closed Nielsen path $\rho_1$ of height~$N_1$. It follows that $G_t$ carries $\C_1 - c_1$ but not $c_1$. Using from \textbf{(b)} that $\C_i$ and $\Lambda_i$ have the same free factor support, it follows that $G_t$ does not carry $\Lambda_1$, and therefore $G_t \subset G_{r-1}$. Since $\Lambda_1$ is carried by $G_r$ but not by $G_{r-1}$, and since $\C_1 - c_1$ is carried by $G_t$, it follows from \textbf{(b)} that $c_1$ is carried by $G_r$ but not by $G_{r-1}$. Since $c_1$ is also carried by $\A_\na(\Lambda_1)$, Proposition 2.18~(4) of \SubgroupsOne\ implies that $c_1 = [\rho_r]$ up to a change of orientation. 

We proved ($ii$) earlier saying that no edge with height $> r$ is weakly attracted to $\Lambda_1$, and the same is clearly true of edges of height $<r$. It follows that there does not exist any closed path in $G \setminus H_r$ whose base point equals the base point~$x$ of $\rho_r$, for if such a closed path existed then together with~$\rho_r$ it would generate a rank~$2$ subgroup of $F_n$ supported by $\A_\na(\Lambda_1)$ and containing $\<\rho_r\>$, contradicting \textbf{(a)} and the fact that the subgroup system $\A_\na(\Lambda_1)$ is malnormal \cite[Proposition 1.4]{HandelMosher:SubgroupsIII}. Pick a generic leaf $\ell$ of~$\Lambda_2$. By ($iii$), each non-trivial maximal subpath of $\ell$ contained in $H_r$ is an iterate of $\rho_r$ or $\rho_r^{-1}$ and so begins and ends at $x$, and if any such subpath exists then its complementary subpaths in $\ell$ begin and end at $x$, a contradiction. Thus $\ell$ is contained in $G \setminus H_r$. If $\ell$ crosses $x$ then, since $\ell$ is birecurrent, there is a closed path in $G \setminus H_r$ based at~$x$, a contradiction. Since $\ell$ does not cross~$x$ but does cross every edge of $G \setminus H_r$, no such edge is incident to~$x$, proving the claim.


\paragraph{Case 3: $\Lambda_1$ is non-geometric and there is a Nielsen path of height $r$.} \quad \\ 
In this case $H^z_r = H_r$ (and $s=r-1$) by Lemma 4.24 of \recognition. But $G \setminus H_r = G \setminus H^z_r$ is not necessarily $f$-invariant. The height~$r$ Nielsen path $\rho_r$ is unique up to orientation, it has distinct endpoints $p,q \in G_r$, and it may be oriented with initial vertex $p$ and terminal vertex $q \not\in G_{r-1}$ (see Fact~1.42 of \SubgroupsOne). In terms of $\rho_r$ we shall see exactly how invariance of $G \setminus G_r$ can fail, and using this mode of failure as a guideline we shall then modify $f \from G \to G$.


Any Nielsen path (and hence any exceptional path) that crosses an edge in $H_r$ decomposes as a concatenation of subpaths that are either contained in $G \setminus H_r$ or are equal to $\rho_r$ or~$\rho_r^{-1}$---this follows by applying Fact~\ref{FactNielsenPathHierarchy} to each of the fixed edges and indivisible Nielsen paths into which the given Nielsen path decomposes. Combining this with ($iii$) it follows that if $E$ is an edge in $G \setminus H_r$ then $f(E)$ decomposes as a concatenation of subpaths $f(E) = \sigma_1 \ldots \sigma_m$ where each $\sigma_j$ is either contained in $G \setminus H_r$ or equal to $\rho_r$ or $\rho_r^{-1}$. 
 
Let $\D$ be the set of directions in $G\setminus G_r$ that are based at the terminal point $q$ of $\rho_r$. If $\D = \emptyset$ then for each edge $E \subset G \setminus H_r$ the path $f(E)$, which has endpoints in $G \setminus H_r$, must be contained in $G \setminus H_r$. The subgraph $K_2 = G \setminus H_r$ is therefore $f$-invariant, and we are done. 

Assuming that $\D \ne \emptyset$, we modify $\fG$ as follows. Define a new graph $G'$ by peeling a new oriented edge off of the path $\rho_r$ and peeling the directions $\cal D$ along with that new path. To be precise, detach the edges associated to $\cal D$ from $q$, reattach them to a new vertex $q'$, and add a new oriented edge $X'$ with initial endpoint $p$ and terminal endpoint $q'$. We view $K_1= G_r$ and $H_r \subset G_r$ as subgraphs of both $G$ and $G'$. Folding $X'$ with $\rho_r$ determines a homotopy equivalence $h: G' \to G$ that we use to mark $G'$. 
Letting $Z = G \setminus H_r$ and $Z' = G' \setminus (H_r \cup X')$, 
the map $h$ restricts to a homeomorphism $Z' \mapsto Z$. We may identify edges and edge paths in $Z'$ with edges and edge paths in $Z$ via the map $h$, writing $E' \leftrightarrow E$ for corresponding edges $E' \subset Z'$, $E \subset Z$, and $\tau' \leftrightarrow \tau$ for corresponding edge paths. The homotopy equivalence $\fG$ induces a homotopy equivalence $f':G' \to G'$ representing $\phi$ that agrees with $f$ on $G_r$, that fixes $X'$, and that takes each edge $E' \subset Z'$ to $f'(E') = \tau_1 \ldots \tau_m$ where $f(E) = \sigma_1 \ldots \sigma_m$ is as above and where $\tau_j = \sigma_j' \subset Z'$ if $\sigma_j \subset Z$ and $\tau_j = X'$ or $X'^{-1}$ if $\sigma_j = \rho_r$ or $\rho_r^{-1}$ respectively. This proves that $K_2 = Z'\cup X'$ is $f'$-invariant. 
We leave it to the reader to verify the remaining conclusions of Lemma~\ref{invariant subgraphs} as they were proved earlier under the \emph{Special Assumption}.

This completes the proof of Lemma~\ref{invariant subgraphs}, which completes the proof of Theorem~\ref{trichotomy}~(2) modulo the proof of~(1).
\end{proof}

\subsection{Lemmas on filling laminations}
\label{SectionFillingLamsLemmas}
Preparatory to the proof that existence of a filling lamination is sufficient for an outer automorphism to act loxodromically on $\fscn$, in this section we prove some facts about filling laminations.

Every \ffs\ $\F$ is realized as a core subgraph $H$ of some marked graph $G$. If $k$ is the minimum number of natural edges of $G$ contained in $G \setminus H$ over all choices of such a pair $H \subset G$, then we say that $\F$ is a \emph{co-edge $k$} free factor system and that $k$ is the \emph{co-edge number} for~$\F$.

 \begin{lemma}\label{fact:co-edge} \quad
\begin{enumerate}
\item \label{item:formula} If $\F = \{[F_1],\ldots,[F_p]\}$ and if $r_i$ is the rank of $F_i$ then the co-edge number of $\F$ is $k= (n - \sum r_i) + (p-1)= (n-1) - \sum_{i=1}^p (r_i-1)$.
 \item \label{item:co-edge inequality} If $\F_i$ is a free factor system with co-edge number $k_i$ for $i=1,2$, and if $\F_1 \sqsubset \F_2$, then $k_2 \le k_1$ with equality if and only if $\F_1$ is obtained from $\F_2$ by removing $\le k_2$ rank one components from $\F_2$. 
 \end{enumerate}
\end{lemma}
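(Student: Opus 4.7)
The plan is to prove (1) directly by an Euler characteristic computation on a quotient graph, and then deduce (2) from (1) by expanding $k_1 - k_2$ using the structure of the extension $\F_1 \sqsubset \F_2$.

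For (1), I would realize $\F = \{[F_1], \ldots, [F_p]\}$ by choosing a core subgraph $H$ of a marked graph $G$, with components of $H$ having fundamental groups conjugate to the $F_i$'s. Collapsing each component of $H$ to a point produces a connected graph $G/H$ of rank $n - \sum r_i$, whose natural edges biject with the natural edges of $G \setminus H$. If $k$ denotes this edge count and $m$ the number of natural vertices of $G$ lying outside $H$, then $G/H$ has $p + m$ vertices, so the Euler characteristic relation gives $k = (n - \sum r_i) + (p - 1) + m$. Minimizing over realizations reduces to minimizing $m$; I would argue $m = 0$ is achievable by constructing $G$ so that all extra edges attach to vertices of $H$ (start from a natural realization of each $F_i$, join them by $p - 1$ connecting edges, and attach $n - \sum r_i$ further loops at endpoints in $H$). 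This yields $k = (n - \sum r_i) + (p - 1) = (n-1) - \sum(r_i - 1)$.

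For (2), I would expand using (1):
\[
k_1 - k_2 = \sum_i (s_i - 1) - \sum_j (r_j - 1).
\]
The extension $\F_1 \sqsubset \F_2$ partitions $\{1, \ldots, p_1\}$ into sets $S_i$, where $j \in S_i$ iff $F_{1,j}$ is conjugate to a free factor of $F_{2,i}$, so regrouping yields
\[
k_1 - k_2 = \sum_i \Bigl[\bigl(s_i - \textstyle\sum_{j \in S_i} r_j\bigr) + (|S_i| - 1)\Bigr].
\]
When $S_i \neq \emptyset$ the bracketed term is the co-edge number inside $F_{2,i}$ of the induced free factor system $\{[F_{1,j}] : j \in S_i\}$ (applying (1) internally to the free group $F_{2,i}$), hence is $\ge 0$; when $S_i = \emptyset$ it equals $s_i - 1 \ge 0$. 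So $k_1 \ge k_2$, with equality iff every term vanishes. This forces, for each $i$, either $|S_i| = 1$ with the unique $r_j = s_i$ (so $[F_{2,i}] \in \F_1$) or $|S_i| = 0$ with $s_i = 1$ (so $[F_{2,i}]$ is a rank-one component of $\F_2$ not appearing in $\F_1$), which is exactly the claim.

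The main obstacle is the realization step in (1): producing a marked graph $G$ with $m = 0$ requires respecting the valence condition at every natural vertex, which is delicate when $\F$ contains rank-one components (a bare loop has a valence-two basepoint, so the connecting edges must be placed at exactly those basepoints to preserve naturality while keeping $m = 0$). Once this combinatorial construction is handled, the rest is Euler characteristic bookkeeping, and (2) follows entirely from applying (1) once to $F_n$ and once inside each $F_{2,i}$.
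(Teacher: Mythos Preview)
Your argument is correct and follows essentially the same approach as the paper: part (1) is an Euler characteristic count (the paper does it directly on $G$ after restricting each component of $H$ to a rose, you do it on $G/H$, but these are the same computation), and part (2) is deduced from (1). Your organization of (2) is slightly cleaner than the paper's---the paper first deletes the components of $\F_2$ carrying nothing from $\F_1$, compares $k_2$ to the co-edge number of the resulting $\F_2'$, and then compares that to $k_1$; you handle both kinds of components in one regrouped sum.

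One small omission: in the equality case you conclude that $\F_1$ is obtained from $\F_2$ by deleting some collection of rank-one components, but you do not verify the bound ``$\le k_2$'' on the number deleted. This is easily filled (and the paper does so explicitly): since $\F_1$ is nonempty at most $p_2 - 1$ components are removed, and formula (1) gives $k_2 = (n - \sum s_i) + (p_2 - 1) \ge p_2 - 1$ because $\sum s_i \le n$.
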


\begin{remark} We note the following consequences of Lemma~\ref{fact:co-edge}. First, given a free factor system $\F$, the extension $\F \sqsubset \{[F_n]\}$ is a one edge extension (see Section~\ref{SectionBackground}) if and only if $\F$ has co-edge number~$1$. More generally, an extension of free factor systems $\F = \{[F_1],\ldots,[F_p]\} \sqsubset \{[F'_1],\ldots,[F'_{p'}]\} = \F'$ is a one-edge extension if and only if one of the following two alternatives holds: each component of $\F'$ contains some component of $\F$ and $\sum_{i=1}^{p'} (\rank F'_i-1) - \sum_{j=1}^{p} (\rank(F_j)-1) = 1$; or $\F'$ is the union of $\F$ and a rank~$1$ component.
\end{remark}

\begin{proof} There is no loss in restricting attention to pairs $H \subset G$ in which each component of $H$ is a rose and to simplicial structures on $G$ in which all vertices have valence at least three. In the context of \pref{item:formula}, $H$ has $p$ vertices and $ \sum r_i$ edges. If $G$ has $q$ additional vertices then the obvious Euler characteristic calculation shows that the number of natural edges in $G \setminus H$ is $(n - \sum r_i) + (p-1) +q$, which is minimized by choosing $q=0$. This proves \pref{item:formula}.

For \pref{item:co-edge inequality} define $\F_2'$ by removing components from $\F_2$ that do not contain any element carried by $\F_1$ and let $k_2'$ be the co-edge number of $\F_2'$. Then $\F_1 \sqsubset \F_2'\sqsubset \F_2$ and the second formula in (1) implies that $k_2 \le k_2'$ with equality if and only if each removed component has rank one. Let $n_1$ and $n_2'$ be the sum of the ranks of the components of $\F_1$ and $\F_2'$ respectively and let $p_1 $ and $p_2'$ be the number of components of $\F_1$ and $\F_2'$ resepctively. Then $p_2' \le p_1$ and $n_2' \ge n_1$ so the first formula in (1) implies that $k_2' \le k_1$ with equality if and only if $p_2' = p_1$ and $n_2' = n_1$. Thus $k_2 \le k_1$ with equality if and only if each removed component of $\F_2$ has rank one and $\F_1 = \F_2'$. Since $k_2 \ge p_2-1$, at most $k_2$ such components can be removed. This completes the proof of \pref{item:co-edge inequality}.
\end{proof} 

\break

\begin{lemma}\label{ANA} Suppose that $\phi$ is rotationless and that $\Lambda \in \L(\phi)$ fills. 
\begin{enumerate}
\item\label{ItemANANotOneEdge}
Letting $\A = \A_{\na}(\Lambda)$ be the non-attracting subgroup system,
\begin{enumerate}
\item\label{item: nongeometric} If $\Lambda$ is non-geometric then $\A$ is a co-edge $\ge 2$ free factor system.
\item \label{item: geometric} If $\Lambda$ is geometric then 
$\A$ is obtained from a co-edge $\ge 2$ free factor system by adding a rank one component. 
\end{enumerate}
\item\label{ItemPFFSNotOneEdge}
Every proper $\phi$-invariant free factor system $\F$ has co-edge number $\ge 2$.
\end{enumerate}
\end{lemma}

\begin{proof} Let $\F$ be a proper, $\phi$-invariant free factor system. Let $f \from G \to G$ be a \ct\ with top stratum $H_r$ in which $\F$ is realized by a core filtration element $G_s \subset G_{r-1}$. Since $\Lambda$ fills, $H_r$ is an \eg\ stratum and $\Lambda$ is associated to $H_r$. Remark 1.3 of \SubgroupsThree\ implies that if $\Lambda$ is non-geometric then $\A = [G_{r-1}]$ which is a proper $\phi$-invariant free factor system, and if $\Lambda$ is geometric then $\A$ is obtained from $[G_{r-1}]$ by adding a rank one component. Conclusion~\pref{ItemANANotOneEdge} therefore follows from Corollary 3.2.2 of \BookOne\ which implies that the co-edge number of $[G_{r-1}]$ is $\ge 2$, and combining this with $\F \sqsubset [G_{r-1}]$ and with Lemma~\ref{fact:co-edge}, Conclusion~\ref{ItemPFFSNotOneEdge} also follows.
\end{proof}

\begin{lemma}\label{not in ANA} Suppose that $\phi$ is rotationless, that $\Lambda \in \L(\phi)$ fills and that $\A = \A_{\na}(\Lambda)$ is its non-attracting subgroup system. 
\begin{enumerate}
\item \label{item: one edge case} If $\F$ is a co-edge one \ffs\ then $\F$ carries a conjugacy class that is not carried by $\A$.
\item \label{item: two edge case} If $\F$ is a co-edge two \ffs\ then one of the following holds.
\begin{enumerate}
\item $\F$ carries a conjugacy class that is not carried by $\A$. 
\item $\F$ is obtained from $\A$ by removing $\le 3$ rank one components.
\end{enumerate}
\end{enumerate}
\end{lemma}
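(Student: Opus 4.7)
The plan is to argue both items by contradiction: assume that every conjugacy class carried by $\F$ is also carried by $\A$, and deduce the subgroup-system containment $\F \sqsubset \A$. Upgrading from this element-level hypothesis to the subgroup-level conclusion is the main technical obstacle. I would achieve it by realizing $\A$ via a minimal $\reals$-tree action of $F_n$ with trivial arc stabilizers, whose conjugacy classes of non-trivial point stabilizers comprise exactly the components of~$\A$ (such a realization exists by definition of vertex group system). A finitely generated subgroup of $F_n$---in particular, a free factor $F$ representing a component of $\F$---all of whose elements act elliptically on such a tree must itself fix a point, by the Serre-type theorem valid in this setting. Applied to each component, this yields $\F \sqsubset \A$.

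Once $\F \sqsubset \A$ is established, invoke Lemma~\ref{ANA} to write $\A = \A_0$ when $\Lambda$ is non-geometric, and $\A = \A_0 \cup \{[\<\rho\>]\}$ when $\Lambda$ is geometric, where in both cases $\A_0$ is a free factor system of co-edge~$\ge 2$. Let $\F_0 = \F \setminus \{[\<\rho\>]\}$ if $[\<\rho\>]$ happens to be a component of $\F$, and let $\F_0 = \F$ otherwise. Then $\F_0$ is a free factor system with $\F_0 \sqsubset \A_0$, and a direct application of the formula in Lemma~\ref{fact:co-edge}~(1) shows that adding or removing the single rank-one component $[\<\rho\>]$ leaves the co-edge number of a free factor system unchanged, so $\F_0$ has the same co-edge as~$\F$.

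Apply Lemma~\ref{fact:co-edge}~(2) to the containment $\F_0 \sqsubset \A_0$ of free factor systems. For item~(1), $\F_0$ has co-edge~$1$ while $\A_0$ has co-edge~$\ge 2$, violating the inequality $k_{\A_0} \le k_{\F_0}$; this contradiction establishes item~(1). For item~(2), $\F_0$ has co-edge~$2$ and $\A_0$ has co-edge~$\ge 2$, so the inequality forces $k_{\A_0} = 2$, and the equality clause of Lemma~\ref{fact:co-edge}~(2) exhibits $\F_0$ as obtained from $\A_0$ by removing at most two rank-one components. Passing from $\F_0, \A_0$ back to $\F, \A$---possibly removing the additional rank-one component $[\<\rho\>]$ from $\A$ in the geometric case when $[\<\rho\>] \notin \F$---shows that $\F$ is obtained from $\A$ by removing at most three rank-one components, establishing alternative~(b).
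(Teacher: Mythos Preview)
Your proof is correct and reaches the same conclusion via essentially the same co-edge comparison (Lemma~\ref{fact:co-edge}) after invoking Lemma~\ref{ANA}. The one genuine difference lies in how you pass from the element-level hypothesis ``every conjugacy class carried by $\F$ is carried by $\A$'' to the subgroup-level containment $\F \sqsubset \A$.

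The paper does this with the meet of free factor systems: in the non-geometric case $\A$ itself is a free factor system, so $\F \meet \A$ is defined, and $\F \meet \A = \F$ exactly when $\F \sqsubset \A$; in the geometric case one takes $\F \meet \A'$ where $\A' = \A \setminus \{[\<\rho\>]\}$, and observes that if $[F] \meet \A'$ is proper in some component $[F]$ then infinitely many conjugacy classes of $[F]$ miss $\A'$, so one can be chosen outside $[\<\rho\>]$ as well. Your route instead realizes $\A$ as the vertex group system of an $\reals$-tree with trivial arc stabilizers and uses the Helly/Serre fixed-point argument for finitely generated elliptic subgroups. Both are valid; yours handles the geometric and non-geometric cases more uniformly, while the paper's avoids appealing to tree-theoretic facts not already set up in Section~\ref{SectionBackground}.

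One step you leave implicit deserves a sentence: after obtaining $\F \sqsubset \A = \A_0 \cup \{[\<\rho\>]\}$ in the geometric case, you assert $\F_0 = \F \setminus \{[\<\rho\>]\} \sqsubset \A_0$. This needs the observation that a component $[F_i]$ of $\F$ carried by the rank-one group $[\<\rho\>]$ must actually equal $[\<\rho\>]$, since $F_i$ is then cyclic generated by some $\rho^k$, and a proper power is never primitive in a free group. With that noted, the rest of your co-edge bookkeeping is exactly as in the paper.
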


\begin{proof} If $\Lambda$ is non-geometric then $\A$ is a co-edge $\ge 2$ free factor system by Lemma~\ref{ANA}\pref{item: nongeometric}. The free factor system $\F \meet \A$ is contained in $\F$ and carries all the conjugacy classes of $\F$ that are carried by $\A$. If $\F \meet \A \ne \F$ then $\F$ carries a conjugacy class that is not carried by $\A$ and we are done. If $\F \meet \A = \F$ then $\F \sqsubset \A$. Since $\F$ has co-edge number $\le 2$ and $\A$ has co-edge number $\ge 2$, Lemma~\ref{fact:co-edge}\pref{item:co-edge inequality} implies that both $\F$ and $\A$ have co-edge number two and that $\F$ is obtained from $\A$ by removing $\le 2$ rank one components. 

If $\Lambda$ is geometric then by Lemma~\ref{ANA}\pref{item: geometric}) there is a rank one component $[\<\rho\>]$ of $\A$ whose complement $\A'$ in $\A$ is a co-edge $\ge 2$ free factor system. If $[F]$ is a component of $\F$ and $[F] \wedge \A' \sqsubset [F]$ is a proper inclusion, then there are many conjugacy classes in $[F]$ that are not carried by $\A'$ and we can choose one that is not contained in $[\<\rho\>]$ and hence not carried by $\A$. We are therefore reduced to the case that $\F \sqsubset \A'$. As above, it follows that $\F$ and $\A'$ have co-edge number two and that $\F$ is obtained from $\A'$ by removing $\le 2$ rank one component. Thus $\F$ is obtained from $\A$ by removing $\le 3$ rank one components and we are done. 
\end{proof}

 \begin{ex} Let $h:S \to S$ be a pseudo-Anosov homeomorphism of the orientable genus zero surface $S$ with four boundary components and let $\phi \in \Out(F_3)$ be the outer automorphism of $F_3$ determined by $h$ and an identification of $F_3$ with the fundamental group of $S$. Then $\L(\phi)$ has a single filling element $\Lambda$ and $\A_\na\Lambda$ has four rank one components, one for each component of $\partial S$. Any one, two, or three of these components form a co-edge two free factor system. Similar examples can be made in which $\phi$ has a geometric stratum but is not itself geometric. \end{ex}

\subsection{When some element of $\L(\phi)$ fills, $\phi$ is loxodromic.} 
\label{SectionLoxChar}

As explained at the beginning of Section~\ref{SectionNonLoxImpliesBddOrbits}, to prove Theorem~\ref{trichotomy}~(1) all that remains is to prove that if $\phi \in \L(\phi)$ has an attracting lamination $\Lambda^+_\phi \in \L(\phi)$ which fills $F_n$ then $\phi$ acts loxodromically on $\fscn$; see Corollary~\ref{loxodromic} below. The method of proof is to construct a map $W \from \fscn \mapsto \Z$ (Definition~\ref{W(F)}) which is equivariant with respect to the cyclic group $\<\phi^i\>$ acting on $\FS(F_n)$ and on $\Z$ where $\phi^i \cdot j = i+j$ (Lemma~\ref{BigW} \pref{BigWonS}) 
 and which is Lipschitz (Lemma~\ref{adjacent splittings}). 

The method of proof we use applies as well to show the result from \cite{BestvinaFeighn:FFCHyp}, Theorem 9.3, saying that if $\phi \in \Out(F_n)$ is fully irreducible then $\phi$ acts loxodromically on the free factor complex; see Remark~\ref{RemarkFSLox}.
 
The following lemma is used below, in Lemmas~\ref{WellLemma} and \ref{distinct ends}, in order to apply Proposition 3.1 of \SubgroupsFour.

\begin{lemma} \label{small intersection} Suppose that $\Lambda^+_\phi \in \L(\phi)$ and $\Lambda^+_\psi \in \L(\psi)$ are filling laminations with generic leaves $\gamma_\phi$ and $\gamma_\psi$ respectively. Assume that $\Lambda^+_\phi \ \ne \Lambda^+_\psi$. Then there is a proper free factor system that carries $\Lambda_\phi^+ \cap\Lambda_\psi^+$ and does not carry an end of either $\gamma_\phi$ or $\gamma_\psi$.
\end{lemma}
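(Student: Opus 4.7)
The plan is to choose a \ct\ $f \from G \to G$ representing a rotationless iterate of $\phi$ in which the top stratum $H_r$ corresponds to $\Lambda^+_\phi$; this is possible because $\Lambda^+_\phi$ fills. Then $[G_{r-1}]$ is a proper free factor system, and it will suffice to prove that $L := \Lambda^+_\phi \cap \Lambda^+_\psi$ is carried by $[G_{r-1}]$, for then the free factor support $\F$ of $L$ will itself be a proper free factor system carrying $L$.

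Once this is shown, the ``ends'' condition will be automatic: any proper free factor system that carried an end of $\gamma_\phi$ would carry a ray which, being an end of a generic leaf of the filling lamination $\Lambda^+_\phi$, visits every finite subpath of $\Lambda^+_\phi$, and would therefore carry all of $\Lambda^+_\phi$ --- contradicting the fact that $\Lambda^+_\phi$ fills. The same argument applies to $\gamma_\psi$.

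The core of the argument is to show that every $\ell \in L$ has height less than $r$ in $G$. Suppose otherwise. Then $\ell$ is generic or semi-generic in $\Lambda^+_\phi$, so at least one end of $\ell$ is dense in $\Lambda^+_\phi$. By density, every finite subpath of $\Lambda^+_\phi$ appears in that end, so the closure of $\ell$ in $\B$ contains every leaf of $\Lambda^+_\phi$; that is, $\Lambda^+_\phi \subseteq \overline{\ell}$. Since $\ell \in \Lambda^+_\psi$ and $\Lambda^+_\psi$ is closed in $\B$, this gives $\Lambda^+_\phi \subseteq \Lambda^+_\psi$.

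The main obstacle is to derive a contradiction from $\Lambda^+_\phi \subseteq \Lambda^+_\psi$; I plan to prove the auxiliary statement that distinct filling attracting laminations are incomparable. Assuming $\Lambda^+_\phi \subseteq \Lambda^+_\psi$, pick a generic leaf $\gamma'_\psi$ of $\Lambda^+_\psi$. If $\gamma'_\psi \in \Lambda^+_\phi$ then $\Lambda^+_\psi = \overline{\gamma'_\psi} \subseteq \Lambda^+_\phi$ forces equality, contradicting $\Lambda^+_\phi \ne \Lambda^+_\psi$. Otherwise $\Lambda^+_\phi$ contains no generic leaf of $\Lambda^+_\psi$, and hence no semi-generic leaf either: the dense end of a semi-generic leaf would make its $\B$-closure contain generic leaves of $\Lambda^+_\psi$, which would then lie in the closed set $\Lambda^+_\phi$. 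So, taking a \ct\ $f' \from G' \to G'$ for $\psi$ whose top stratum $H'_s$ corresponds to $\Lambda^+_\psi$, every leaf of $\Lambda^+_\phi$ has height less than $s$ in $G'$, and hence $\Lambda^+_\phi$ is carried by the proper free factor system $[G'_{s-1}]$. This contradicts the fact that $\Lambda^+_\phi$ fills.
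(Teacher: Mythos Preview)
Your proof is correct and follows essentially the same idea as the paper's. The paper's version is shorter in two ways. First, since the statement is symmetric in $\phi$ and $\psi$, the paper simply assumes without loss that $\Lambda^+_\phi \not\subset \Lambda^+_\psi$; this immediately rules out any leaf of $\Lambda^+_\phi \cap \Lambda^+_\psi$ having closure equal to $\Lambda^+_\phi$, so your auxiliary incomparability argument becomes unnecessary. Second, for the existence of a proper free factor system carrying the intersection, the paper cites Lemma~3.1.15 of \BookOne\ rather than unpacking the height argument as you do. Your version has the virtue of being self-contained, and your incomparability statement for distinct filling laminations is a clean fact worth isolating; but the WLOG observation is a simpler route to the same conclusion.
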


\begin{proof} 
Assuming without loss that $\Lambda_\phi^+ \not \subset \Lambda_\psi^+$, no leaf of $\Lambda_\phi^+ \cap\Lambda_\psi^+$ has closure equal to~$\Lambda_\phi^+$. The existence of a proper free factor system carrying $\Lambda_\phi^+ \cap\Lambda_\psi^+$ therefore follows from Lemma~3.1.15 of \BookOne. Since $\gamma_\phi$ and $\gamma_\psi$ are birecurrent and filling their ends are not carried by any proper free factor system.
\end{proof}

Suppose that $\phi, \phi^{-1} \in\Out(F_n)$ are rotationless, that $\Lambda^\pm $ is a lamination pair for $\phi$ and that the conjugacy class $c$ is not carried by $\A_{\na}(\Lambda^\pm)$. As $i \to \infty$, $\phi^i(c)$ contains longer and longer subpaths in common with $\Lambda^+$ and does not contain any very long subpaths of $\Lambda^-$ (see Section~\ref{SectionBackground}). Symmetrically, $\phi^{-i}(c)$ contains longer and longer subpaths in common with $\Lambda^-$ and does not contain any very long subpaths of $\Lambda^+$. In the middle, there is an interval of integers $w$ such that $\phi^w(c)$ has neither very long subpaths of $\Lambda^+$ nor very long subpaths of $\Lambda^-$. Intuitively this interval is the ``well'' of $c$, which we shall formalize using ``well functions''. The following definition and lemma makes this precise in our current context. See \cite{AlgomKfir:Contracting} for a similar definition.


\begin{definition}[Wells of conjugacy classes] \label{w(c)}
For the rest of the section we fix a rotationless $\phi \in \Out(F_n)$ and a dual lamination pair $\Lambda^+ \in \L(\phi)$,  $\Lambda^- \in \L(\phi)$ that fills. Let $\gamma^+$ and $\gamma^-$ be generic leaves of $\Lambda^+$ and $\Lambda^-$ respectively. Lemma~\ref{small intersection} (applied with $\psi = \phi^{-1}$) implies that the subsets $B_1 = \{\gamma^+\}$, $B_2 = \{\gamma^-\} \subset \B$ satisfy the hypotheses of Proposition 3.1 of \SubgroupsFour. The conclusion of that proposition is the existence of neighborhoods $U^+, U^- \subset \B$ of $\gamma^+$, $\gamma^-$ respectively so that the following holds:

\break

\begin{description}
\item[Separation of proper free factor systems:] For any proper free factor system $\F$,
\begin{itemize}
\item either $\F$~carries no line nor conjugacy class that is carried by $U^-$,
\item or $\F$ carries no line nor conjugacy class that is carried by $U^+$. 
\end{itemize}
\end{description}
Choose smaller neighborhoods if necessary, there is no loss in assuming that $U^+$ and $U^-$ are attracting neighborhoods for $\gamma^+$ and $\gamma^-$ respectively (see Definitions~3.1.1 and 3.1.5 of \BookOne). Thus $\phi(U^+) \subset U^+$ and $\phi^{-1}(U^-) \subset U^-$, and the collections of subsets $\{\phi^{k}(U^+) \suchthat k \ge 0\}$ and $\{\phi^{-k}(U^-) \suchthat k \ge 0\}$ are neighborhood bases in $\B$ for $\gamma^+$ and $\gamma^-$ respectively.

Recall from Theorem~F of \SubgroupsThree\ that the following are equivalent for any conjugacy class $c$ in $F_n$, namely: $c$ is not carried by $\A_{\na}\Lambda^\pm$; $c$ is weakly attracted to $\Lambda^+$ under the action of $\phi$; and $c$ is weakly attracted to $\Lambda^-$ under the action of $\phi^{-1}$. When these conditions are satisfied we say that $c$ is {\em weakly attracted}. For any weakly attracted $c$ there is a maximal integer $w_+(c) \in \Z$ such that $c \in \phi^{w_+(c)}(U^+)$ or equivalently $\phi^{-w_+(c)}(c) \in U^+$; similarly, there is a maximal integer $w_-(c) \in \Z$ such that $c \in \phi^{-w_-(c)}(U^-)$ or equivalently $ \phi^{w_-(c)}(c) \in U^-$. We refer to $w_+$ and $w_-$ as the {\em well functions of $\phi$ with respect to $U^+$ and $U^-$} respectively, and we emphasize that the domain of these functions is the set of weakly attracted conjugacy classes.

If $c$ is not carried by any a proper free factor system then we say that $c$ is {\em filling}. 
\end{definition}

\begin{lemma} \label{WellLemma} The following hold for all weakly attracted conjugacy classes $c$:
\begin{enumerate}
\item \label{WellAndIteration} $w_+(\phi^k(c)) = w_+(c)+k$ and $w_-(\phi^k(c)) = w_-(c)-k$ for all $k$.
\item \label{WellAndZero} $w_+(c) \ge 0$ if and only if $c \in U^+$ and $w_-(c) \ge 0$ if and only if $c \in U^-$. 
\end{enumerate}
Furthermore there is a positive integer $M$ such that the following also hold for all weakly attracted $c$:
\begin{enumeratecontinue}
\item \label{KeyWellProperty} 
If $c$ is not filling then $\abs{w_+(c) + w_-(c) } \le M$. 
\item \label{item:same free factor} If $c_1$ and $c_2$ are weakly attracted and if there is a proper free factor system that carries both $c_1$ and $c_2$ then $\abs{w_+(c_1) - w_+(c_2)}\le M$ and $\abs{w_-(c_1) - w_-(c_2)} \le M$.
\end{enumeratecontinue}
\end{lemma}

\subparagraph{Remark.} 
Intuitively the ``well of $c$'' may be thought of as the subinterval of $\Z$ bounded by the integers $w_+(c)$ and $-w_-(c)$, whose length equals $\abs{w_+(c) + w_-(c)}$. But this ``well'' behaves badly when $c$ fills, for instance there no bound to its length: one may construct a single $c$ containing any given leaf segment of $\Lambda_-$ and any given leaf segment of $\Lambda_+$, and those leaf segments may be chosen so that $w_+(c)$ and $w_-(c)$ both exceed any given positive number. Amongst those $c$ that do \emph{not} fill, item~\pref{KeyWellProperty} of the lemma pins down the size of the well. Also, item~\pref{item:same free factor} pins down the location of the wells of those conjugacy classes supported in a given proper free factor system, enabling us to extend the concept of ``well'' in Definition~\ref{W(F)}.

\begin{proof} Items \pref{WellAndIteration} and \pref{WellAndZero} are immediate consequences of the definitions. From \pref{WellAndIteration} we see that $ w_+(\phi^k(c)) + w_-(\phi^k(c))$ is independent of $k$. In proving \pref{KeyWellProperty} we may therefore assume that $w_-(c) = 0$, and so $c \in U^-$. Since $U^-,U^+$ satisfy separation of proper free factor systems, we have $c \not \in U^+$, and so by \pref{WellAndZero} we have $w^+(c) < 0$. By Theorem H of \SubgroupsThree\ there exists $M'$ such that for all weakly attracted $c'$, if $c' \not\in U^-$ then $\phi^{M'}(c') \in U^+$. By \pref{WellAndIteration} and \pref{WellAndZero} this applies to $c' = \phi(c)$ and so $\phi^{M'+1}(c) \in U^+$. A final application of \pref{WellAndIteration} and \pref{WellAndZero} shows that $w^+(c) \ge -M'-1$ and so \pref{KeyWellProperty} holds with $M = M'+1$.

We will prove the first inequality of \pref{item:same free factor}; the second follows by replacing $\phi$ with $\phi^{-1}$. We may assume without loss that $w_+(c_1) > w_+(c_2)$. By \pref{WellAndIteration} and \pref{WellAndZero} we may assume that $w_+(c_1) = 0$ and $c_1 \in U^+$. Since $U^-,U^+$ satisfy separation of proper free factor systems, it follows that $c_2 \not \in U^-$. Thus $w_-(c_2) < 0$ and so $w_+(c_2) > -M = -w_+(c_1)-M$ by \pref{KeyWellProperty}. Since $w_+(c_2) <  0 = -w_+(c_1)$, we are done. 
\end{proof}

\medskip

Lemma~\ref{WellLemma}~\pref{KeyWellProperty} shows that $w_+(c)$ and $-w_-(c)$ are coarsely equivalent functions when restricted to nonfilling $c$, and so for most of our purposes are interchangeable. In what follows we focus on $w_+(c)$.

\begin{definition}[Wells of free factor systems and of one-edge free splittings]
 \label{W(F)}Suppose that $\F$ is a proper free factor system that carries at least one weakly attracted conjugacy class. 
Define $W(\F)$ to be the minimum value of $w_+(c)$ as $c$ varies over all weakly attracted conjugacy classes that are carried by $\F$. Lemma~\ref{WellLemma}~\pref{item:same free factor} guarantees that $W(\F)$ is well defined. If $S$ is a one edge free splitting then its elliptic subgroups determine a co-edge one free factor system $\F(S)$ and so $W(S) = W(\F(S))$ is defined, because Lemma~\ref{not in ANA}~\pref{item: one edge case} guarantees the existence of a conjugacy class carried by $\F(S)$ but not by $\A_\na(\Lambda^\pm)$.
\end{definition} 

We record the following observations for easy reference.

\begin{lemma}\label{BigW}
 \begin{enumerate}
\item \label{BigWonF} $W( \phi^m(\F)) = W(\F) + m$ for each integer $m$ and each proper free factor system $\F$ that carries at least one weakly attracted conjugacy class.
\item \label{BigWonS}$W(S^{\phi^m}) = W(\F(S^{\phi^m})) = W(\phi^{-m}(\F(S))) = W(\F(S)) - m = W(S)-m$ for all $m$ and all one edge free splittings $S$.
\end{enumerate}
\end{lemma}

\begin{proof} Item \pref{BigWonF} follows from Lemma~\ref{WellLemma}~\pref{WellAndIteration} and the observation that $c \in \F$ is weakly attracted if and only if $\phi^m(c) \in \phi^m(\F)$ is weakly attracted. The second equality in \pref{BigWonS} follows from the fact that $[a]$ acts elliptically on $S$ if and only if $\phi^{-1} {[a]}$ acts elliptically on $S^\phi$. The first and fourth equality follow from the definitions and the third follows from~\pref{BigWonF}.
\end{proof}

 \begin{lemma} \label{bounded by 2M} If $\F_1$ and $\F_2$ are proper free factor systems and if there exists a weakly attracted conjugacy class $c$ that is carried by $\F_1$ and $ \F_2$ then $ | W(\F_1) - W(\F_2) | \le 2M$. 
 \end{lemma}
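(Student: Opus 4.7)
The plan is to use the conjugacy class $c$ as a common reference point and apply Lemma~\ref{lem:well}\pref{item:same free factor} twice. By Definition~\ref{W(F)}, there exist conjugacy classes $c_1$ carried by $\F_1$ and $c_2$ carried by $\F_2$, neither carried by $\A_{\na}\Lambda^\pm$, such that $W(\F_1) = w(c_1)$ and $W(\F_2) = w(c_2)$. The hypothesis gives a conjugacy class $c$ that is simultaneously carried by $\F_1$, by $\F_2$, and not carried by $\A_{\na}\Lambda^\pm$.

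Now $c_1$ and $c$ are both carried by the proper free factor system $\F_1$ (which is proper by assumption) and neither is carried by $\A_{\na}\Lambda^\pm$, so Lemma~\ref{lem:well}\pref{item:same free factor} yields $|w(c_1) - w(c)| \le M$. The symmetric argument applied inside $\F_2$ gives $|w(c_2) - w(c)| \le M$. The triangle inequality then produces
\[
|W(\F_1) - W(\F_2)| = |w(c_1) - w(c_2)| \le |w(c_1) - w(c)| + |w(c) - w(c_2)| \le 2M,
\]
which is the desired bound.

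There is no serious obstacle: the content of the lemma is essentially bookkeeping on top of Lemma~\ref{lem:well}\pref{item:same free factor}, whose proof rested on the substantive input from Proposition~3.1 of \SubgroupsFour\ via Lemma~\ref{small intersection}. The only subtlety to check is that $c_1, c_2$ realizing the minima in Definition~\ref{W(F)} actually exist; this is guaranteed because $W(\F_i)$ is defined as a minimum over the nonempty set witnessed by $c$ itself, and the values $w(\cdot)$ lie in $\Z$ and are bounded below on this set (again by Lemma~\ref{lem:well}\pref{item:same free factor} applied with $c$ fixed). Once that is noted, the four-line argument above completes the proof.
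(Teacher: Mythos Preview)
Your proof is correct and follows essentially the same route as the paper's own argument: use the shared conjugacy class $c$ as a pivot, apply Lemma~\ref{lem:well}\pref{item:same free factor} once inside $\F_1$ and once inside $\F_2$, and combine via the triangle inequality. The paper's version is a one-line computation $|W(\F_1)-w(c)|+|w(c)-W(\F_2)|\le M+M$; your explicit introduction of minimizers $c_1,c_2$ and your remark on why the minimum in Definition~\ref{W(F)} is attained simply make the same step more transparent.
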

 
\begin{proof} $| W(\F_1) - \W(\F_2) | \le |W(\F_1) - w_+(c)| + | w_+(c) - W(\F_2)| \le M+M$ by Lemma~\ref{WellLemma}~\pref{item:same free factor}.
 \end{proof}

\begin{lemma} \label{adjacent splittings} If $S_1$ and $S_2$ are one-edge free splittings that bound an edge in $\fscn$ then $\abs{W(S_1) - W(S_2)} \le 8M$. \end{lemma}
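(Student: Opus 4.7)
The plan is to exploit the common 2-edge refinement of $S_1$ and $S_2$ and bridge between them via short chains of applications of Lemma~\ref{bounded by 2M}. Since $S_1$ and $S_2$ bound an edge of $\fscn$, both are face-collapses of a common 2-edge free splitting $S$. First I would verify that a conjugacy class is elliptic in both $S_1$ and $S_2$ if and only if it is elliptic in $S$: the $S$-axis of a conjugacy class hyperbolic in $S$ but elliptic in $S_1$ uses only edges collapsed by $S \to S_1$, and similarly for $S_2$, so a class elliptic in both must have trivial $S$-axis. Hence $\F(S) = \F(S_1) \meet \F(S_2)$, and since each $\F(S_i)$ has co-edge~1, the meet $\F(S)$ has co-edge~2.

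Next I would apply Lemma~\ref{not in ANA}(\ref{item: two edge case}) to $\F(S)$. In case~(a), $\F(S)$ carries a conjugacy class $c$ not carried by $\A_\na(\Lambda^\pm)$, which is then shared by $\F(S_1)$ and $\F(S_2)$ because $\F(S) \sqsubset \F(S_i)$. A single application of Lemma~\ref{bounded by 2M} gives $|W(S_1) - W(S_2)| \le 2M$, which suffices.

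The real work goes into case~(b), where $\F(S) \sqsubset \A_\na(\Lambda^\pm)$ differs by at most three rank-one components $[\<\rho_1\>], \ldots, [\<\rho_k\>]$ with $k \le 3$. Here $\F(S_1)$ and $\F(S_2)$ share no class outside $\A_\na(\Lambda^\pm)$, so a direct application of Lemma~\ref{bounded by 2M} is unavailable. Using Lemma~\ref{not in ANA}(\ref{item: one edge case}) I would choose classes $c_i \in \F(S_i) \setminus \A_\na(\Lambda^\pm)$ and consider the free factor support $\F^*$ of $\{c_1, c_2\}$. If $\F^*$ is proper, two applications of Lemma~\ref{bounded by 2M} via $\F^*$ give $|W(S_1) - W(S_2)| \le 4M$. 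If instead $\{c_1, c_2\}$ fills $F_n$, I would invoke the classification of one-edge extensions $\F(S) \sqsubset \F(S_i)$ (from the remark following Lemma~\ref{fact:co-edge}) together with the explicit description $\A_\na(\Lambda^\pm) = \F(S) \cup \{[\<\rho_1\>], \ldots, [\<\rho_k\>]\}$ to select alternative classes in $\F(S_i) \setminus \A_\na(\Lambda^\pm)$ assembling into a chain of at most four proper free factor systems from $\F(S_1)$ to $\F(S_2)$ with consecutive systems sharing a class outside $\A_\na(\Lambda^\pm)$; four applications of Lemma~\ref{bounded by 2M} then yield $|W(S_1) - W(S_2)| \le 8M$.

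The hard part is precisely this last sub-case: building a length-4 chain of proper free factor systems bridging $\F(S_1)$ and $\F(S_2)$ when the initial choices $c_1, c_2$ jointly fill $F_n$. The strategy must exploit both the rigid structure of the $\phi$-invariant subgroup system $\A_\na(\Lambda^\pm)$ and the very limited ways in which a one-edge extension can pass from a co-edge~2 free factor system sitting inside $\A_\na(\Lambda^\pm)$ to a proper co-edge~1 system carrying classes outside $\A_\na(\Lambda^\pm)$ -- either adding a single rank-one component or extending rank by one on a single component, per the remark after Lemma~\ref{fact:co-edge}.
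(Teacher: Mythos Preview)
Your outline matches the paper's approach closely in the easy case: use the common 2-edge refinement with corresponding co-edge~2 free factor system $[H]=\F(S)$, apply Lemma~\ref{not in ANA}\pref{item: two edge case}, and in case~(a) finish with a single application of Lemma~\ref{bounded by 2M}. Your identification $\F(S)=\F(S_1)\meet\F(S_2)$ is correct and equivalent to the paper's marked-graph-pair setup.

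The genuine gap is in case~(b), and you have essentially said so yourself. You propose to ``select alternative classes \ldots\ assembling into a chain of at most four proper free factor systems'' using the classification of one-edge extensions and the description $\A_\na(\Lambda^\pm)=\F(S)\cup\{[\langle\rho_1\rangle],\ldots,[\langle\rho_k\rangle]\}$, but you have not actually built such a chain, and this construction \emph{is} the proof. The obstacle is real: once every conjugacy class in $\F(S)$ lies in $\A_\na(\Lambda^\pm)$, any class you pick in $\F(S_i)\setminus\A_\na(\Lambda^\pm)$ must genuinely use the extra edge $E_i$, and there is no abstract reason that an arbitrary pair of such classes, or a short chain of proper free factors containing them, can be produced without getting your hands on the combinatorics of $(G,H)$.

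The paper does exactly that. It collapses $H$ to a disjoint union of roses and runs a four-way case analysis on the shape of $(G,H)$ (one, two, or three components of $H$; whether $E_1,E_2$ are loops or arcs). In each case it exhibits explicit closed paths $\sigma_0\subset H$, $\sigma_1\subset H\cup E_1$, $\sigma_2\subset H\cup E_2$ representing part of a free basis $a,b,c$, then writes down an explicit chain of rank~2 free factors
\[
\langle a,b\rangle,\quad \langle ab^m,bc^m\rangle,\quad \langle b,c\rangle,\quad \langle ac^m,cb^m\rangle,\quad \langle a,c\rangle
\]
(or a variant in the last case) whose consecutive intersections contain elements $ab^m$, $bc^m$, $cb^m$, $ac^m$ chosen, via a suitable $m$, to avoid the finitely many ``bad'' classes $[x_i^k]$ and hence to lie outside $\A_\na(\Lambda^\pm)$. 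Four applications of Lemma~\ref{bounded by 2M} then give $8M$. Your proposal does not supply any of these constructions, and the intermediate manoeuvre of first testing whether $\F_\supp(\{c_1,c_2\})$ is proper, while harmless, does not sidestep the hard sub-case: when $H$ is connected the obvious choices $c_1,c_2$ already form part of a basis and fill together with $a$, so you still need the explicit chain.
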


\begin{proof} By Lemma~\ref{LemmaCollapseMaps} there is a co-edge two natural marked graph pair $(G,H)$ with the edges of $G \setminus H$ labelled $E_1,E_2$ so that $S_1 = \<G, H \cup E_1\>$ and $S_2 = \<G, H \cup E_2\>$. Recalling that $n = \rank(F_n) \ge 3$, it follows that $H$ is a nonempty subgraph of~$G$; recall also that each of its components is noncontractible. After collapsing a maximal subforest in $H$, we may assume that each component of $H$ is a rose. 

If $[H] $ carries a conjugacy class that is not carried by $\A_{\na}\Lambda^\pm$ then Lemma~\ref{bounded by 2M}, applied with $\F_i = [H \cup E_i]$, implies that $|W(S_1) - W(S_2)| \le 2M$. We may therefore assume that every element of $[H]$ is carried by $\A_{\na}\Lambda^\pm$. By Lemma~\ref{not in ANA}~\pref{item: two edge case}, there exist $x_1,x_2 ,x_3 \in F_n$ such that any conjugacy class carried by $\A_{\na}\Lambda^\pm$ but not by $[H]$ is contained in the set $\{[x_i^k] \suchthat 1 \le i \le 3, k \in \Z\}$. 

There are four cases to consider. The first three use the following method for bounding $|W(S_1) - W(S_2)|$.

Suppose that $\sigma_0$ is a closed path in $H$ and that for $i=1,2$, $\sigma_i$ is a closed path in $H\cup E_i$ that is not contained in $H$. Suppose further that all three paths have a common basepoint $v$ and that the elements $a,b,c \in F_n$ determined by $\sigma_0,\sigma_1$ and $\sigma_2$ respectively (under an identification of $\pi_1(G,v)$ with $F_n$) are part of a free basis for $F_n$, generating a rank~$3$ free factor $\<a,b,c\> \subgroup F_n$. Choose $m > 0$ so that each of $[ab^m], [ac^m], [bc^m]$ and $[cb^m]$ is not contained in $\{[x_i^k]; 1 \le i \le 3; k \in \Z\}$ and hence not carried by $\A_{\na}\Lambda^\pm$. Consider the rank two free factors 
$$\underbrace{\<a,b\>}_{X_1} \qquad \underbrace{\<ab^m,bc^m\>}_{X_2} \qquad \underbrace{\<b,c\>}_{X_3} \qquad \underbrace{\<ac^m,cb^m\>}_{X_4} \qquad \underbrace{\<a,c\>}_{X_5}
$$
The elements
$$ab^m \qquad\qquad bc^m \qquad\qquad cb^m \qquad\qquad ac^m
$$
are contained in 
$$X_1\cap X_2 \qquad X_2 \cap X_3 \qquad X_3 \cap X_4 \qquad X_4 \cap X_5
$$ respectively. Since $ab^m$ is carried by $[\<a,b\>] \sqsubset [H\cup E_1]$ and $ac^m$ is carried by $[\<a,c\>] \sqsubset [H\cup E_2]$, four applications of Lemma~\ref{bounded by 2M} imply that $\abs{W(S_1) - W(S_2)} \le 8M$.
 
We now turn to the case analysis, depending on how the subgraph $H$, which has at most three components, interacts with the edges $E_1,E_2$.
 
If $H$ is connected then $G$ is a rose and we let $\sigma_0$ be one of the loops in $H$ and $\sigma_i = E_i$ for $i=1,2$. 
 
If $H$ has three components, we can label them $H_0, H_1$ and $H_2$ where $H_0$ contains the initial vertices of both $E_1$ and $E_2$ and $H_i$ contains the terminal vertex of $E_i$ for $i =1,2$. For $j=0, 1,2$ let $\tau_j$ be a loop based at the unique vertex of $ H_j$. Let $\sigma_0 = \tau_0$ and for $i =1,2$ let $\sigma_i = E_i \tau_i \overline E_i$. 
 
 If $H$ has two components $H_0$ and $H_1$ and one of $E_1$ or $E_2$ is a loop then we may assume that $E_1$ has initial vertex in $H_0$ and terminal vertex in $H_1$ and that $E_2$ is a closed path with basepoint in $H_0$. For $j=0, 1$ let $\tau_j$ be a loop based at the unique vertex of $ H_j$. Let $\sigma_0 = \tau_0$, $\sigma_1 = E_1 \tau_1 \overline E_1$ and $\sigma_2 = E_2$.

The remaining case is that $H$ has two components $H_0$ and $H_1$ and neither $E_1$ nor $E_2$ is a loop. We may assume that both $E_1$ and $E_2$ have initial vertex in $H_0$ and terminal vertex in $H_1$. The argument in this cases is a variation of the one used in the three preceding cases. For $j=0,1$ let $\tau_j$ be a loop based at the unique vertex of $H_j$. Let $\sigma_0 = \tau_0$, let $\sigma_1 = E_1 \tau_1 \overline E_1$ and let $\sigma _2 = E_1 \overline E_2$. 
Let $a,b,c \in F_n$ be determined by $\sigma_0,\sigma_1,\sigma_2$ as above, and so $\<a,b,c\>$ is a rank~3 free factor and we have $[\<a,b\>] \sqsubset [H \cup E_1]$ and $[\< a,\bar c b c\>] \sqsubset [H \cup E_2 ]$. 

\break

Choose $m$ so that $[a b^m],[\bar c b^m]$ and $ [a^m \bar c \bar b c]$ are not in $\A_{\na}\Lambda^\pm$. Consider the rank two free factors 
$$\<a,b\> \qquad \<ab^m,\bar cb^m\> \qquad \<\bar cb^m,a^m\bar c \bar b c\> \qquad \<a,\bar c b c\>$$ 
whose consecutive intersections contain 
$$ab^m \quad\qquad\qquad \bar cb^m \quad\qquad\qquad a^m \bar c \bar b c
$$
respectively. Three applications of Lemma~\ref{WellLemma}~\pref{item:same free factor} show that $\abs{W(S_2) -W(S_1)} \le 6M$. 
\end{proof} 

The following corollary puts the pieces together to finish the proof of Theorem~\ref{trichotomy}~(1):
 
\begin{cor} \label{loxodromic} Suppose that $\phi \in\Out(F_n)$. If some $\Lambda^+ \in \L(\phi)$ fills then the action of $\phi$ on $\fscn$ is loxodromic.
\end{cor}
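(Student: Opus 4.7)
The plan is to assemble the machinery developed earlier in Section~\ref{SectionLoxChar} into a quantitative orbit estimate. First I would pass to a rotationless iterate $\phi_0 = \phi^k$; loxodromicity of $\phi_0$ on $\fscn$ implies loxodromicity of $\phi$ since the orbit $\{S_0 \cdot \phi^n\}_{n \in \Z}$ lies at bounded Hausdorff distance from $\{S_0 \cdot \phi_0^n\}_{n \in \Z}$ in the orbit map. The filling lamination $\Lambda^+$ and its dual $\Lambda^-$ are laminations for $\phi_0$ as well, so the constant $M$, the attracting neighborhoods $U^\pm$, and the function $W$ of Definition~\ref{W(F)} are all available for $\phi_0$.

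Next, fix any vertex $S_0$ of $\fscn$, that is, a one-edge free splitting. Since $\F(S_0)$ is a co-edge one free factor system, Lemma~\ref{not in ANA}\pref{item: one edge case} guarantees that $\F(S_0)$ carries a conjugacy class not carried by $\A_{\na}(\Lambda^\pm)$, so $W(S_0) \in \Z$ is defined, and the same is true of $W(S_0 \cdot \phi_0^n)$ for every $n \in \Z$. By Remark~\ref{rem:W3},
$$W(S_0 \cdot \phi_0^n) \;=\; W(S_0) - n,$$
so $W$ varies linearly along the orbit.

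The final step is to combine this linearity with the Lipschitz estimate of Lemma~\ref{adjacent splittings}. The vertices of $\fscn$ are exactly the one-edge free splittings, and consecutive vertices along any edge path of $\fscn$ are one-edge splittings bounding an edge; iterating the $8M$-bound of that lemma along such a path yields
$$|n_1 - n_2| \;=\; \bigl|W(S_0 \cdot \phi_0^{n_1}) - W(S_0 \cdot \phi_0^{n_2})\bigr| \;\le\; 8M \cdot d_{\fscn}\!\left(S_0 \cdot \phi_0^{n_1},\; S_0 \cdot \phi_0^{n_2}\right).$$
Combined with the trivial upper bound $d_{\fscn}(S_0 \cdot \phi_0^{n_1}, S_0 \cdot \phi_0^{n_2}) \le d_{\fscn}(S_0, S_0 \cdot \phi_0) \cdot |n_1 - n_2|$ coming from equivariance of the action and the triangle inequality, this shows that $n \mapsto S_0 \cdot \phi_0^n$ is a quasi-isometric embedding $\Z \to \fscn$, which is exactly the condition that $\phi_0$ (and hence $\phi$) acts loxodromically on $\fscn$.

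I do not expect any serious obstacle at this point, because all of the substantive work has already been carried out. The real difficulty lay in constructing a coarse quasi-morphism $W$ that is simultaneously \emph{linear along the $\phi$-orbit} (Remark~\ref{rem:W3}, built on Lemma~\ref{lem:well} and the $\phi$-equivariance of $\A_{\na}(\Lambda^\pm)$) and \emph{coarsely Lipschitz on the $1$-skeleton of $\fscn$} (Lemma~\ref{adjacent splittings}, which required the delicate case analysis on co-edge two marked graph pairs together with Lemma~\ref{not in ANA}\pref{item: two edge case}). The corollary itself is the bookkeeping step that combines these two properties into a quasi-isometric orbit, in the spirit of a standard Busemann-style argument for loxodromicity.
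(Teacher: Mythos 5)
Your proposal is correct and follows essentially the same route as the paper: pass to a rotationless iterate, use $W(S^{\phi^m}) = W(S)-m$ (Remark~\ref{rem:W3}, with $W$ defined via Lemma~\ref{not in ANA}~\pref{item: one edge case}), and apply the $8M$-Lipschitz bound of Lemma~\ref{adjacent splittings} along edge paths to get a linear lower bound on orbit distance. The only difference is that you spell out the upper bound and the reduction from $\phi_0$ back to $\phi$, which the paper leaves implicit.
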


\begin{proof} After replacing $\phi$ by an iterate we may assume that $\phi$ and $\phi^{-1}$ are rotationless. For any one-edge splitting $S$, we have $W(S^{\phi^m}) = W(S) -m$ by Lemma~\ref{BigW} \pref{BigWonS}. Lemma~\ref{adjacent splittings} therefore implies that the distance between $S^{\phi^m}$ and $S$ in $\fscn$ grows linearly in $m$, which completes the proof of the corollary.
\end{proof}

\begin{remark} 
\label{RemarkFSLox}
As said in the introduction, it is known from \cite{BestvinaFeighn:FFCHyp} Theorem 9.3 that 
$\phi \in \Out(F_n)$ acts loxodromically on the free factor complex if and only if $\phi$ is fully irreducible. The ``if'' direction follows from the same method of proof as in Corollary~\ref{loxodromic}, but with a shorter argument using Lemma~\ref{bounded by 2M} in place of Lemma~\ref{adjacent splittings} and using that every nonfilling conjugacy class is not carried by $\A_\na(\Lambda^\pm_\phi)$, which is either empty or consists of a single, filling, rank~$1$ component depending on whether $\Lambda^\pm_\phi$ is a geometric lamination  pair.
\end{remark}

\subsection{Axes with distinct ends: a part of Theorem~\ref{disjoint axes}.}
\label{SectionDistinctEnds}

The following statement, `part' of the proof of Theorem~\ref{disjoint axes}, is proved here since it is a corollary to the methods of Section~\ref{SectionLoxChar}.

\begin{cor} \label{distinct ends} Given $\phi, \psi \in \Out(F_n)$ and filling lamination pairs $\Lambda_\phi^\pm \in \L^\pm(\phi)$ and $\Lambda_\psi^\pm \in \L^\pm(\psi)$, if $\Lambda_\phi^+ \ne \Lambda_\psi^+$ then $\bdy_- \phi \ne \bdy_- \psi$.
\end{cor}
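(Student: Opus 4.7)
The approach is to use the coarsely Lipschitz function $W$ of Definition~\ref{W(F)}, attached to $\phi$ and $\Lambda_\phi^\pm$, to detect a discrepancy between the sequences $\phi^{-m}(S_0)$ and $\psi^{-m}(S_0)$ as $m\to+\infty$. Arguing by contradiction, assume $\bdy_-\phi=\bdy_-\psi$ while $\Lambda_\phi^+\ne\Lambda_\psi^+$. The plan is to show that $W$ is bounded above along the $\psi$-orbit $\{S_0^{\psi^{-m}}\}_{m\ge 0}$ of a well-chosen basepoint $S_0$ while $W(S_0^{\phi^{-m}})=W(S_0^\phi)+m\to+\infty$ by Remark~\ref{rem:W3}, and then use Gromov hyperbolicity of $\fscn$ (Part~I, \FSHyp) to derive a contradiction via fellow-traveling.

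I would first replace $\phi,\psi$ by positive iterates so that $\phi^{\pm 1},\psi^{\pm 1}$ are all rotationless, fix a one-edge free splitting $S_0$, and apply Lemma~\ref{lem:well} to set up $W$ together with attracting neighborhoods $U_\phi^\pm$ of generic leaves $\gamma_\phi^\pm$ of $\Lambda_\phi^\pm$. Using $\Lambda_\phi^+\ne\Lambda_\psi^+$, I would shrink $U_\phi^+$ so that it is disjoint from a chosen attracting neighborhood $V_\psi^+$ of a generic leaf of $\Lambda_\psi^+$; moreover, since $\Lambda_\psi^+$ fills while $\A_\na(\Lambda_\phi^\pm)$ is proper (Lemma~\ref{ANA}), a generic leaf of $\Lambda_\psi^+$ cannot be carried by $\A_\na(\Lambda_\phi^\pm)$, so $V_\psi^+$ may be further shrunk to avoid the closed set of lines in $\B$ carried by $\A_\na(\Lambda_\phi^\pm)$. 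By Lemma~\ref{not in ANA} and its analogue for $\Lambda_\psi^\pm$, the co-edge one free factor system $\F(S_0)$ carries a conjugacy class $c$ lying in neither $\A_\na(\Lambda_\phi^\pm)$ nor $\A_\na(\Lambda_\psi^\pm)$. By Theorem~F of \SubgroupsThree, $\psi^m(c)$ is weakly attracted to $\Lambda_\psi^+$, hence $\psi^m(c)\in V_\psi^+$ for large $m$; the choices above then force both $\psi^m(c)\notin U_\phi^+$ and $\psi^m(c)\notin\A_\na(\Lambda_\phi^\pm)$, so the contrapositive of Remark~\ref{rem:W2} gives $w(\psi^m(c))<M$. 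Since $\F(S_0^{\psi^{-m}})=\psi^m(\F(S_0))$ contains $\psi^m(c)$, this yields $W(S_0^{\psi^{-m}})\le w(\psi^m(c))<M$ for all sufficiently large $m$.

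For the contradiction, both $\{S_0^{\phi^{-m}}\}_{m\ge 0}$ and $\{S_0^{\psi^{-m}}\}_{m\ge 0}$ are quasi-geodesic rays in $\fscn$ based at $S_0$ (by loxodromicity), sharing the ideal endpoint $\bdy_-\phi=\bdy_-\psi$ under the contradiction hypothesis. The standard Morse-type fellow-traveling property for asymptotic quasi-geodesic rays in a Gromov hyperbolic space provides a uniform constant $D$ and, for each $m$, an integer $k(m)$ with $d(S_0^{\phi^{-m}},S_0^{\psi^{-k(m)}})\le D$; combined with the Lipschitz bound of Lemma~\ref{adjacent splittings}, this forces $W(S_0^{\psi^{-k(m)}})\ge W(S_0^{\phi^{-m}})-8MD\to+\infty$, contradicting the upper bound just established. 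The principal obstacle is producing that upper bound, and its proof depends on using the orbit direction $\psi^m(c)$ (attracted to $\Lambda_\psi^+$) rather than $\psi^{-m}(c)$ (attracted to $\Lambda_\psi^-$), since only the former can be separated from a neighborhood of $\gamma_\phi^+$ using the hypothesis $\Lambda_\phi^+\ne\Lambda_\psi^+$ alone.
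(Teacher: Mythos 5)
Your overall strategy---bounding $W_\phi$ along the backward $\psi$-orbit while it grows linearly along the backward $\phi$-orbit, then contradicting fellow-traveling of asymptotic quasi-geodesic rays via the Lipschitz property of Lemma~\ref{adjacent splittings}---is the same as the paper's. But the step that produces the upper bound has a genuine gap: you propose to ``shrink $U_\phi^+$ so that it is disjoint from a chosen attracting neighborhood $V_\psi^+$'' and to conclude that $\psi^m(c)\in V_\psi^+$ forces $\psi^m(c)\notin U_\phi^+$. This is not available. The weak topology on $\B$ is badly non-Hausdorff: a basic neighborhood of a line consists of all lines whose realization crosses a prescribed finite subpath, and for any two finite paths there are circuits (hence conjugacy classes, hence periodic lines) crossing both. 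So no matter how small you take $U_\phi^+$ and $V_\psi^+$, they intersect, and they even share conjugacy classes: a circuit can contain a long piece of $\gamma_\phi^+$ and a long piece of $\gamma_\psi^+$ simultaneously. The hypothesis $\Lambda_\phi^+\ne\Lambda_\psi^+$ simply cannot be exploited by point-set separation in $\B$.

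What replaces this in the paper is Lemma~\ref{small intersection} applied to the pair $(\gamma_\phi^+,\gamma_\psi^+)$, which feeds Proposition~3.1 of \SubgroupsFour: one obtains neighborhoods $V_\phi^+$, $V_\psi^+$ such that no \emph{proper free factor system} carries both a conjugacy class in $V_\phi^+$ and a conjugacy class in $V_\psi^+$. The separation happens at the level of free factor support, not of open sets, and accordingly one argues with the proper free factor system $\psi^l(\F(T))$ rather than with a single class: it carries $\psi^l([a])\in V_\psi^+$, where $[a]$ is supplied by Lemma~\ref{not in ANA}(1) for $\A_\na(\Lambda_\psi^\pm)$, hence it carries no class in $V_\phi^+$; choosing $N_\phi$ with $\phi^{N_\phi}(U_\phi^+)\subset V_\phi^+$, the system $\phi^{-N_\phi}\psi^l(\F(T))$ carries no class in $U_\phi^+$, yet by Lemma~\ref{not in ANA}(1) it does carry a class not carried by $\A_\na(\Lambda_\phi^\pm)$, so Remark~\ref{rem:W2} gives $W_\phi\le M_\phi$ there, and Remark~\ref{rem:W1} then yields $W_\phi(\psi^l(\F(T)))\le M_\phi+N_\phi$ for large $l$. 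Two smaller points: your assertion that a single class $c$ in $\F(S_0)$ avoids both nonattracting subgroup systems is not what Lemma~\ref{not in ANA} gives and is left unproved (though it is also unnecessary); and requiring $V_\psi^+$ to avoid all lines carried by $\A_\na(\Lambda_\phi^\pm)$ relies on that set of lines being closed, which is true but needs a citation. Your endgame matches the paper's implicit reduction and is fine once the bound on $W_\phi$ along the $\psi$-orbit is obtained by the free-factor-support argument.
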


\begin{proof} Assume that $\Lambda_\phi^+ \ne \Lambda_\psi^+$. It suffices to show that for any one-edge free splitting $T = \<G,H\>$ with corresponding marked graph pair $(G,H)$, and for all positive constants $D$, there exists a positive constant $L$ so that the distance in $\fscn$ between $T^{\phi^{-k}}$ and $T^{\psi^{-l}}$ is $\ge D$ for all $k,l>L$. 
 
Assume the notation of Lemma~\ref{WellLemma} applied to $\phi$. Thus $U^\pm_\phi$ are attracting neighborhoods of generic leaves $\gamma^\pm_\phi$ of $\Lambda^\pm_\phi$, $M_\phi$ is a positive constant and $w_{+,\phi}$ is a function defined on conjugacy classes not carried by the non-attracting subgroup system $\A_\phi$ associated to $\Lambda^\pm_\phi$. Let $W_\phi(S)=W_\phi(\F(S))$ be the corresponding function defined on one edge splittings and on the corresponding coedge~1 free factor systems, given in Definition~\ref{W(F)}. 
 

We will prove that there is a upper bound for $W_\phi(\psi^l(\F(T))$ that is independent of $l \ge 0$. To see why this suffices, note that $\F(T^{\phi^{-k}}) = \phi^k(\F(T))$ and that $\F(T^{\psi^{-l}}) = \psi^l(\F(T))$ by Lemma~\ref{BigW} \pref{BigWonS}. 
Since $W_\phi(\phi^k(\F(T)) = W_\phi(\F(T))+k$, we have that 
$$\abs{W_\phi(T^{\phi^{-k}})-W_\phi(T^{\psi^{-l}})} = \abs{W_\phi(\phi^k(\F(T))) - W_\phi(\psi^l(\F(T)))} \to \infty \quad\text{as $l,k \to \infty$}
$$ 
and so Lemma~\ref{adjacent splittings} completes the proof.
 
 Lemma~\ref{small intersection} implies that $B_1 = \{\gamma_\phi^+\}$ and $B_2 = \{\gamma_\psi^+\}$ satisfy the hypotheses of Proposition~3.1 of \SubgroupsThree. The conclusion of that proposition is the property ``separation of proper free factor systems'' from Definition~\ref{w(c)}, namely, the existence of neighborhoods $V_\phi^+$ of $\gamma_\phi^+$ and $V_\psi^+$ of $\gamma_\psi^+$ so that no proper free factor system carries both a conjugacy class carried by $V_\phi^+$ and a conjugacy class carried by $V_\psi^+$. Choose $N_\phi > 0$ so that $\phi^{N_\phi}(U^+_\phi) \subset V_\phi^+ $. 
 
 By Lemma~\ref{not in ANA}~\pref{item: one edge case}, $\F(T)$ carries a conjugacy class $[a] $ that is not carried by the non-attracting subgroup system $\A_\psi$ associated to $\Lambda_\psi^\pm$. Choose $N_\psi > 0$ so that $\psi^l([a])$ is carried by $V^+_\psi$ for all $l \ge N_\psi$. Since $\psi^{l}(\F(T))$ carries $\psi^l([a])$, it can not carry any conjugacy class that is carried by $V_\phi^+$. It follows that $\phi^{-N_\phi}\psi^l(\F(T))$ does not carry any conjugacy class that is carried by $U^+_\phi$ and so, by Lemma~\ref{WellLemma} \pref{WellAndZero}, we have $w_+(c) < 0$ for all $c$ carried by $\phi^{-N_\phi}\psi^l(\F(T))$. Thus $W_\phi(\phi^{-N_\phi}\psi^l(\F(T))) \le 0$ and $W_\phi(\psi^l(\F(T))) \le N_\phi$ for all $l \ge N_\psi$ by Lemma~\ref{BigW} \pref{BigWonS}.
\end{proof}

\section{Theorem~\ref{ThmFinitelyGenerated}: The expansion factor kernel $K$}
\label{SectionExpansionKernel}
\subsection{Setup and outline of the proof.}
\label{SectionSetupOutline}
We first recall the statement.

\vspace{,1in}

\noindent {{\bf Theorem~\ref{ThmFinitelyGenerated}.} \emph{ Suppose that $\eta \in \IAThree$ is rotationless, that $\Lambda_\eta^+ \in \L(\eta)$ is filling and that $K$ is the kernel of $\PF= \PF_{\Lambda^+} : \Stab(\Lambda_\eta^+) \cap \IAThree \to \R$. Then there exist compact surfaces $S_1,\ldots, S_m$ with nonempty boundary and a homomorphism 
$$\Theta \from K \to \mcg(S_1) \times \ldots \times \mcg(S_m)
$$ 
whose image has finite index and whose kernel is a finitely generated, abelian group of linearly growing outer automorphisms. In particular, $K$ is finitely generated.}
 \vspace{,1in}

The proof of the theorem will show that the surfaces $S_1,\ldots,S_m$ arise from $m$ different ``simultaneous geometric models'' for the subgroup $K$, as expressed precisely in Lemma~\ref{LemmaMultiedge}. When the proof is complete, and for wider application, we shall collect some additional conclusions regarding these geometric models; see Proposition~\ref{CorGeomModelFeatures}. 

Here is a brief outline of the proof, by subsections:
\begin{description}
\item[Section \ref{SectionKAttrLams}:] A study of the attracting laminations of elements of $\K$, proving amongst other things that all such laminations are geometric.
\item[Section~\ref{SectionUPG}:] A study of UPG subgroups of $K$, proving that each is finitely generated.
\item[Section~\ref{SectionOrderingAttrLams}:] A study of a natural partial ordering on the set of attracting laminations of elements of $K$.
\item[Section~\ref{SectionGeomModels}:] The statement and proof Lemma~\ref{LemmaMultiedge} wherein the homomorphism $\Theta$ is defined. Also included is a review of geometric models.
\item[Section~\ref{SectionKernelProof}:] The proof of Theorem~\ref{ThmFinitelyGenerated}.
\end{description} 

\paragraph{Standing assumptions of Section~\ref{SectionExpansionKernel}:} Throughout this section we assume the hypotheses and notation of Theorem~\ref{ThmFinitelyGenerated}, including:

\smallskip\noindent
$\bullet$ The rotationless outer automorphism $\eta \in \IAThree$.

\smallskip\noindent
$\bullet$ The filling lamination $\Lambda^+_\eta$.

\smallskip\noindent
$\bullet$ The subgroup $\Gamma = \Stab(\Lambda_\eta^+) \cap \IAThree$ and its normal subgroup $K$.

\smallskip\noindent
$\bullet$ The proper free factor system $\F_\agen $ defined in Section~\ref{SectionBackground} to be the smallest free factor system that carries the ageneric sublamination of $\Lambda^+_\eta$. Each element $\phi \in \Gamma$ preserves $\Lambda^+_\eta$ and hence preserves both its ageneric sublamination and $\F_\agen $.

\subsection{Attracting laminations of $K$} 
\label{SectionKAttrLams}
The next result says that every attracting lamination of every element of $K$ is a geometric lamination, and gives an explicit description of the behavior of generic leaves of $\Lambda^+_\eta$ in \ct s representing elements of $K$.


\begin{lemma} \label{LemmaFIX} 
Under the ``Standing assumptions of Section~\ref{SectionExpansionKernel}'', consider a rotationless $\phi \in K$ and a relative train track representative $f \from G \to G$ of $\phi$ in which $\F_\agen$ is realized by proper core filtration element $ G_r$. Suppose that $f$ satisfies the following properties (which evidently hold when $f$ is a~\ct): each \eg\ stratum is \eg-aperiodic, equivalently $\phi$ fixes each element of $\L(\phi)$; each non-fixed \neg\ stratum is a single oriented edge $E_i$ such that $f(E_i)=E_i u_i$ for some path $u_i$ in $G_{i-1}$; and each periodic Nielsen path has period one. In this situation the following hold:
\begin{enumerate}
\item \label{ItemHighestIsNEG}
The highest stratum of $G$ is an \neg-edge $E_N$.
\item \label{ItemStrataAboveGeometric}
Each \eg\ stratum of $G$ is geometric (equivalently each element of $\L(\phi)$ is geometric). 
\end{enumerate}
Furthermore, if $\gamma_\eta$ is the realization in $G$ of a generic leaf of $\Lambda^+_\eta$ then:
\begin{enumeratecontinue}
\item\label{ItemSplitsIntoNielsen}
$\gamma_\eta$ crosses $E_N$ bi-infinitely often, and each term of the highest edge splitting of $\gamma_\eta$ is a Nielsen path for $f$.
\item\label{ItemGammaEtaANA}
For each $\Lambda \in \L(\phi)$ the non-attracting subgroup system $\A_\na(\Lambda)$ of $\Lambda$ carries $\gamma_\eta$ and hence fills~$F_n$. 
\end{enumeratecontinue}
\end{lemma}

\begin{proof} We first prove~\pref{ItemHighestIsNEG}, that the highest stratum $H_N$ of $G$ is \neg. If not then $H_N$ is an \hbox{\eg-aperiodic} stratum with associated lamination pair $\Lambda^\pm_\phi$, and we shall argue to a contradiction. Proposition~3.3.3 of \BookOne\ and the assumption that $\PF_{\Lambda^+_\eta}(\phi) = 0$ imply that $\Lambda^+_\eta$ is neither an element of $\L(\phi)$ nor of $\L(\phi^\inv)$. It follows that $\Lambda^+_\phi$ and $\Lambda^-_\phi$ are not equal to $\Lambda^+_\eta$. Nor are they sublaminations of $\Lambda^+_\eta$, because $\Lambda^+_\phi$ and $\Lambda^-_\phi$ are not carried by $\F_\agen = [G_r]$ and so they are not contained in the ageneric sublamination of $\Lambda^+_\eta$ which is the unique maximal proper sublamination. Since~$\Lambda^+_\eta$ is invariant by $\phi^{\pm 1}$, the sequence $\phi^i (\gamma_\eta)$ is contained in the weakly closed set $\Lambda^+_\eta$, and hence $\Lambda^+_\eta$ contains every line to which $\gamma_\eta$ is weakly attracted under iteration of $\phi^{\pm 1}$. It follows that $\gamma_\eta$ is not weakly attracted to $\Lambda^\pm_\phi$ under iteration of $\phi^{\pm 1}$. Theorem~H of \SubgroupsThree\ implies that $\gamma_\eta$ is carried by $\A_\na(\Lambda^\pm_\phi)$. But this is impossible: if $\Lambda^\pm_\phi$ is non-geometric then $\A_\na(\Lambda^\pm_\phi)$ is a proper free factor system, whereas if $\Lambda^\pm_\phi$ is geometric then $\A_\na(\Lambda^\pm_\phi)$ consists of a proper free factor system that carries $\F$ plus a single rank~$1$ component. But $\gamma_\eta$ is not periodic, and $\gamma_\eta$ fills and so is not carried by any proper free factor system. This contradiction proves~\pref{ItemHighestIsNEG}. 

We next prove~\pref{ItemSplitsIntoNielsen}. Since the line $\gamma_\eta$ is filling it crosses $E_N$ at least once, and since $\gamma_\eta$ is birecurrent it crosses $E_N$ bi-infinitely often. Applying the basic splitting property for \neg\ edges with respect to $f$, one obtains the highest edge splitting of $\gamma_\eta$ by subdividing it at the initial vertex of each copy of $E_N$ that it crosses and at the terminal vertex of each copy of $\overline E_N$ that it crosses, and this is a bi-infinite splitting of the form $\gamma_\eta = \ldots \cdot \gamma_{-1} \cdot \gamma_0 \cdot \gamma_1 \cdot \ldots$ Again by birecurrence, each term of this splitting is repeated bi-infinitely. For each $i$, the endpoints of $\gamma_i$ are fixed and so either $\gamma_i$ is a periodic and hence fixed Nielsen path for $f$, or the combinatorial length $\abs{f_\#^k(\gamma_i)} \to \infty$ as $k \to \infty$. Assuming that some, and hence infinitely many, $\gamma_i$'s are not Nielsen paths of $f$, we shall argue to a contradiction. 
 
Choose integers $a < b$ so that neither $\gamma_a$ nor $\gamma_b$ is a Nielsen path and such that for some $a < i < b$ the term $\gamma_i$ crosses $E_N$ (in either direction). Then $f^k_\#(\gamma_a \cdot \ldots\cdot \gamma_b) = \alpha_k E_N \beta_k$ or $\alpha_k \overline E_N \beta_k$ where:
\begin{itemize}
\item[(a)] $\abs{\alpha_k}, \abs{\beta_k} \to \infty$.
\item[(b)] $\alpha_k$ and $\beta_k$ cross $E_N$ (in either direction) a uniformly bounded number of times.
\end{itemize} 
By taking a weak limit of the sequence $f^k_\#(\gamma_a \cdot \ldots\cdot \gamma_b)$ based at the $E_N$ or $\overline E_N$ subpath that is shown, we find a leaf of $\Lambda^+_\eta$ that crosses $E_N$ (in either direction) a finite non-zero number of times. Such a line would be ageneric but not contained in $G_r \subset G_{N-1}$, contradicting that the ageneric sublamination of $\Lambda^+_\eta$ is supported by $\F_\agen = [G_r]$. This completes the proof of~\pref{ItemSplitsIntoNielsen}. 

Suppose now that $\Lambda \in \L(\phi)$ and that $\Lambda' \in \L(\phi^{-1})$ are a dual pair of laminations. Item~\pref{ItemHighestIsNEG} implies that $\F_{\supp}( \Lambda) =\F_{\supp} (\Lambda')$ does not carry $\gamma_\eta$. In particular neither $\Lambda$ nor $\Lambda'$ contains $\gamma_\eta$. Item~\pref{ItemSplitsIntoNielsen} implies that $\gamma_\eta$ is fixed by $f_\#$ and is therefore not weakly attracted to either $\Lambda$ or $\Lambda'$. As above, Theorem~H of \SubgroupsThree\ implies that $\gamma_\eta$ is carried by the non-attracting subgroup system of $\Lambda$, proving~\pref{ItemGammaEtaANA}. 

For each $\Lambda \in \L(\phi)$, $\A_\na\Lambda$ is proper because it does not carry the generic leaves of~$\Lambda$. Item~\pref{ItemGammaEtaANA} therefore implies that $\A_\na\Lambda$ is not a free factor system. It follows by \SubgroupsThree, Theorem~F, that $\Lambda$ is geometric, proving~\pref{ItemStrataAboveGeometric}. 
\end{proof}

\subsection{UPG subgroups of $K$} 
\label{SectionUPG} 

The following proposition, which gives a piece of the conclusion of Theorem~\ref{ThmFinitelyGenerated}, is the main result of this subsection. Its proof appears at the end of the subsection. 

We continue to assume the hypotheses of Theorem~\ref{ThmFinitelyGenerated} and the notational setup from the end of Section~\ref{SectionSetupOutline}. 

\begin{prop} \label{PropUPG} Every $\upg$ subgroup $\cH \subgroup K$ is abelian, linear and finitely generated.
\end{prop}

We review some central concepts of \BookTwo. Given a marked graph $G$, \emph{a filtration with one-edge oriented strata} is a filtration $\filt$ in which each stratum $G_i \setminus G_{i-1}$ is a single oriented edge $E_i$. Given such a filtration, a homotopy equivalence $f \from G \to G$ is \emph{upper triangular} if for each $i$ we have $f(E_i) = E_i u_i$ where either $u_i=u_i(f)$ is a nontrivial closed path in $G_{i-1}$, or $u_i$ is trivial and $f \restrict E_i$ is the identity. 
Note that $f$ fixes each vertex of $G$ and is a relative train track map. Let~${\cQ}(G)$ be the set of upper triangular homotopy equivalences of $G$ up to homotopy relative to the vertices of $G$. The operation of composition descends to a group structure on $\cQ(G)$ (Lemma 6.1 of \BookTwo). There is a natural homomorphism $\cQ(G) \mapsto \Out(F_n)$.

\begin{ex} \label{linear example} Here is a special case of an outer automorphism $\phi$ as considered in Example~\ref{filling reducible}. Continuing the notation of that example, let $G_1 \subset G$ be roses of rank $m=3$ and rank $5$, respectively. Let $X,Y,Z$ be the edges of $G_1$ and $A,B$ the edges of $H_2=G \setminus G_1$. Fix a nontrivial word $w$ in $\<X,YX \bar Y\>$, and let $\cH'$ be the subgroup of $\cQ(G)$ whose elements have the form
$$X \mapsto X \qquad Y \mapsto YX^{3i} \qquad Z \mapsto Zw^{3j}\qquad A \mapsto A \qquad B \mapsto B 
$$
Then $\cH'$ is a rank two abelian linear subgroup and every word in $ \<X, YX\bar Y, Zw\bar Z\>$ is fixed by every element of $\cH'$. The smallest free factor that contains $ \<X, YX\bar Y\>$ equals $\<X,Y\>$ because it is contained in $\<X, Y\>$ and properly contains $\<X\>$. Similarly the smallest free factor that contains $ \<X, YX\bar Y, Zw\bar Z\>$ equals $\<X,Y,Z\>$. Choose a word $\sigma \in \<X, YX\bar Y, Zw\bar Z\>$ that fills $\<X, YX\bar Y, Zw\bar Z\>$ and hence fills $\<X,Y,Z\>$. Using this $\sigma$, let $\phi \in \Out(F_5)$ and $\Lambda \in \L(\phi)$ be as in Example~\ref{filling reducible}. Then $\cH'$ injects into $\Out(F_5)$ producing a rank two linear subgroup $\cH$ of $K$. 
\end{ex} 

\begin{definition}[Weak Filtration]
\label{DefWeakFiltration}
A filtration $\filt $ with one-edge oriented strata satisfies the \emph{(Weak Filtration)} property if for each pair $G_j \subset G_{j'}$ of consecutive core filtration elements one of the following occurs:
\begin{description}
\item [(a)] 
$j' = j+1$ and $E_{j+1}$ forms a loop component of $G_{j'}$ that is disjoint from $G_{j}$;
\item [(b)]
$j'=j+2$, the edges $E_{j+1}$, $E_{j+2}$ have the same initial endpoint not in $G_j$, they have terminal endpoints in $G_j$, and $G_{j+2}$ is obtained by attaching the endpoints of the topological arc $\overline E_{j+1} E_{j+2}$ to~$G_{j}$.
\end{description}
In case (b) we let $G_{j,\epsilon}$ be the component of the core graph $G_j$ that contains the terminal endpoint $v_{j,\epsilon}$ of $E_{j + \epsilon}$ for $\epsilon = 1,2$.
\end{definition}

Rather than work in $\cQ(G)$ itself, we will work in the subgroup $\cQ'(G)$ of $\cQ(G)$ defined as follows.

\begin{definition}[Principal Endpoints]
\label{DefOneEdgePrincipal}
Suppose that $\filt$ is a filtration with one-edge oriented strata that satisfies (Weak Filtration). Let $\E$ be the set of all $E_i \subset G$ such that the terminal endpoint of $E_i$ is contained in a circle component of $G_{i-1}$, in which case $G_{i-1} \subset G_{i+1}$ are consecutive core filtration elements falling into case (b) of Definition~\ref{DefWeakFiltration}. An element $\fG$ of $\cQ(G)$ satisfies the \emph{(Principal Endpoints)} property if for all $E_i \in \E$ we have $f(E_i) = E_i$. The set of elements of $\cQ$ satisfying (Principal Endpoints) forms a subgroup $\cQ'(G)$.
\end{definition}

The advantage of $\cQ'(G)$ is illustrated by the following lemma: the injectivity condition fails for $\cQ(G)$.

\begin{lemma} \label{uniqueness} Suppose that $\emptyset = G_0 \subset G_1 \subset\cdots\subset G_J = G$ is a filtered graph with one-edge oriented strata that satisfies (Weak Filtration). Then the natural homomorphism $\cQ'(G) \to \Out(F_n)$ is injective. 
\end{lemma}

\begin{proof} It suffices to prove that if $f \in \cQ'(G)$ and $f \restrict G_j$ is not the identity for some core filtration element $G_j$ then there is a circuit $\sigma \subset G_j$ whose corresponding conjugacy class in $F_n$ is not $\phi$-invariant. We prove this by induction on $j$ with the $j =1$ case following from the fact that $f \restrict G_1$ is the identity for all $f \in \cQ'(G)$. Assuming that $G_j \subset G_{j'}$ are consecutive core filtration elements and that $f \restrict G_j$ is the identity but $f \restrict G_{j'}$ is not the identity we will produce the desired $\sigma$. Adopting the notation of (Weak Filtration), we must be in case (b) where $j' = j+2$. 

Suppose at first that $E_{j+1}$, is not fixed by $f$. By (Principal Endpoints), $G_{j,1}$ has rank at least two. Choose $\alpha_1 \subset G_{j,1}$ to be a closed path based at $v_{j,1}$ so that the elements of $\pi_1(G_{j,1},v_{j,1})$ determined by $\alpha_1$ and $u_{j+1}$ do not commute. If $u_{j+2}$ is non-trivial, let $\alpha_2 = u_{j+2}$; otherwise let $\alpha_2 \subset G_{j,2}$ be any non-trivial closed path based at the terminal vertex of $E_{j+2}$. Define $\sigma = E_{j+1} \alpha_1 \bar E_{j+1} E_{j+2} \alpha_2\bar E_{j+2}$.  Letting $[\gamma]$ denote the unique path obtained by straightening a continuous function $\gamma \from [0,1] \to G$ with endpoints at vertices, note that $f_\#(\sigma) = E_{j+1} [u_{j+1} \alpha_1 \bar u_{j+1}]\bar E_{j+1} E_{j+2} \alpha_2\bar E_{j+2}$ and that $[u_{j+1} \alpha_1 \bar u_{j+1}] \ne \alpha_1$, by choice of $\alpha_1$. Thus $\sigma$ and $f_\#(\sigma)$ are distinct circuits and so they represent distinct conjugacy classes in $F_n$. 

The remaining case is that $E_{j+1}$ is fixed and that $E_{j+2}$ is not fixed. We can proceed as in the previous case, reversing the roles of $E_{j+1}$ and $E_{j+2}$, because $G_{j,2}$ will have rank at least $2$, unless the component $L$ of $G_{j+1}$ that contains $v_{j,2}$ is not a topological circle but its core $G_{j,2}$ is a topological circle. In this special case we must have $L = G_{j,2} \union E_{j+1}$, and so $v_{j,1} = v_{j,2}$ because $v_{j,2}$ is the unique vertex in $G_{j,2}$. The circuit $\sigma = E_{j+2} \bar E_{j+1}$ is therefore defined. As in the previous case, the circuit $f_\#(\sigma) = E_{j+2} u_{j+2}\bar E_{j+1}$ is distinct from $\sigma$.
\end{proof}

The following proposition, which makes no use of $K$, is essentially a restatement of the main result of \BookTwo, namely Theorem 1.1, the ``Kolchin Theorem for $\Out(F_n)$'', regarding the structure of finitely generated UPG subgroups of $\Out(F_n)$. We have added to the statement some details of the proof of that theorem found on the last page of \BookTwo, as well as some details from earlier lemmas and propositions in \BookOne\ and \BookTwo, and we have employed the new language of Definitions~\ref{DefWeakFiltration} and~\ref{DefOneEdgePrincipal}. The proof of the proposition tracks closely the proof of Theorem 1.1, but we have attempted to write a (mostly) self-contained proof, subject to citations of stated results in those papers.


\begin{proposition}\label{PropKolchinGraph} 
If $\cH \subgroup \Out(F_n)$ 
is any finitely generated \upg\ subgroup, and if $\emptyset = \F_0 \sqsubset \F_1 \sqsubset \cdots \sqsubset \F_L = \{[F_n]\}$ is any maximal $\cH$-invariant filtration by free factor systems, then there exists a filtered marked graph $\emptyset = G_0 \subset G_1 \subset\cdots\subset G_J = G$ with one-edge oriented strata and there exists an isomorphic lift of $\cH$ to a subgroup $\cH'$ of ${\cQ}'(G)$, such that the following hold:
\begin{enumerate}
\item\label{ItemEachIsACore}
Each $\F_\ell$ is realized by some core filtration element~$G_{j_\ell}$;
\item \label{ItemWeakFiltration}$\emptyset = G_0 \subset G_1 \subset\cdots\subset G_J = G$ satisfies (Weak Filtration);
\end{enumerate}
and in addition for each $\phi$ in $\cH$ represented by its lift $f \from G \to G$ in $\cH'$ the following hold:
\begin{enumeratecontinue}
\item\label{ItemCHPrimeRepRotationless}
$f$ is a rotationless relative train track map and every vertex of $G$ is a principal fixed point.
\item \label{ItemPeriodOne} 
Every periodic Nielsen path for $f$ with endpoints at vertices has period one.
\end{enumeratecontinue}
\end{proposition}

\begin{proof} Each extension $\F_{\ell-1} \sqsubset \F_\ell$ is a one-edge extension, by Theorem~5.1 of \BookTwo\ applied inductively to the unique component $[A]$ of $\F_\ell$ that is not a component of $\F_{\ell-1}$. The inductive construction of a filtered marked graph $G$ with one-edge oriented strata which satisfies~\pref{ItemEachIsACore} and~\pref{ItemWeakFiltration} and the existence $f \in \cQ(G)$ representing $\phi$ is straightforward; see for example \BookOne\ Lemma~2.6.7. Henceforth we assume~\pref{ItemEachIsACore} and~\pref{ItemWeakFiltration} hold.

To arrange that $f \in \cQ'(G)$ we may have to modify $\fG$ so that it satisfies (Principal Endpoints). 
We carry out this modification by induction on the height $i$ of the lowest edge $E_i \subset G$ that fails to satisfy (Principal Endpoints). We have $E_i \in \E$ and $f(E_i) \ne E_i$. By Definition~\ref{DefOneEdgePrincipal} we have core filtration elements $G_{i-1} \subset G_{i+1}$ satisfying case (b) of (Weak Filtration). Also, the edge $E_i$ has initial vertex $v \not\in G_{i-1}$ and terminal vertex $v$ in a circle component of $G_{i-1}$ consisting of a single edge $E$ with both endpoints on $v$, and furthermore $f(E_i) = E_i E^d$ for some $d \ne 0$.
%
%
Let ${\cal X}$ be the set of edges in $G \setminus E$ that are incident to $v$, so $E_i$ is the lowest edge in $\cal X$. By (Weak Filtration), $v$ is the terminal endpoint of each $X \in {\cal X}$. 
Define a homotopy equivalence $g : G \to G$ to be the identity on edges not in $ {\cal X}$ and by $g(X) = X \overline E^d$ for each $X \in {\cal X}$. Every closed edge path based at $w$ decomposes as a concatenation of edges not in ${\cal X} \union \{E\}$ and subpaths of the form $X_1 E^p \bar X_2$ for some $X_1,X_2 \in {\cal X}$ and some $p \in \Z$. It follows that $g_\#$ fixes each such path. Thus $g$ is homotopic to the identity and represents the identity element of $\Out(F_n)$. Replace $f$ with the element of $\cQ(G)$ obtained by tightening $g \circ f  \from G \to G$. It is still the case that $f$ represents $\phi$, but now we have raised the height of the lowest edge that fails to satisfy (Principal Endpoints). 
Continuing by induction, we eventually produce $f \in \cQ'(G)$ representing $\phi$. Lemma~\ref{uniqueness} implies that such an $f$ is unique. It follows that $\phi \mapsto f$ defines an isomorphism from $\cH$ to a subgroup $\cH'$ of $\cQ'(G)$. 

Since $f$ has no \eg\ strata, it is a relative train track map. The property (Weak Filtration) implies that $f$ fixes every vertex $v \in G$ and that every $f$-periodic direction at $v$ is fixed, so $f$ is rotationless. To complete the proof of~\pref{ItemCHPrimeRepRotationless} it suffices to show that each vertex $v \in G$ is principal. Let $D(v)$ be the number of fixed directions based at $v$. Since there are no \eg\ strata, it suffices to show that if a component $C$ of $\Fix(f) = \Per(f)$ is a circle then $C$ contains a vertex $v$ with $D(v) \ge 3$. Suppose that $G_j \subset G_{j'}$ are the consecutive core filtration elements such that $G_{j'}$ is the lowest core filtration element that contains a vertex $v \in C$. If $G_j \subset G_{j'}$ falls under case (a) of (Weak Filtration) then $v$ is in the circle component of $G_{j'}$ described in (a), and so there are two fixed directions in $G_{j'}$ based at $v$. Furthermore, using that $v$ has valence $\ge 3$ in $G$, by (Principal Endpoints) it follows that there is at least one more fixed direction at $v$ in $G \setminus G_{j'}$. Thus $D(v) \ge 3$ and we are done. Otherwise $G_j \subset G_{j'}$ falls under case (b) of (Weak Filtration) and the directions determined by $E_{j+1}$ and $E_{j+2}$ are incident to $v$ and are fixed. If there is another fixed direction based at $v$ then we are done. If not then $E_{j+1} \subset C \subset \Fix(f)$ and the direction determined by $\overline E_{j+1}$ at the terminal endpoint $w$ of $E_{j+1}$ is fixed. But $w$ also has two fixed directions in $G_j$ by (Weak Filtration) so $D(w) \ge 3$ and we are done in this case as well. This completes the proof of~\pref{ItemCHPrimeRepRotationless}.

The fact that $f$ satisfies item~\pref{ItemPeriodOne} would follow from the conclusion of \recognition\ Lemma 3.28 which says that every periodic Nielsen path of $f$ \emph{with principal endpoints} has period~$1$, because every endpoint is principal by~\pref{ItemCHPrimeRepRotationless}. But to justify applying \recognition\ Lemma 3.28 we must verify its two hypotheses. One hypothesis says $f$ is rotationless which is true by~\pref{ItemCHPrimeRepRotationless}. The other hypothesis is that $f$ satisfies the properties in the conclusion of \recognition\ Theorem 2.19 (a construction of nicely behaved relative train track representatives). The only Theorem 2.19 conclusion that might fail is conclusion (P) which prohibits \lq extraneous\rq\ periodic strata~$H_l$ by requiring that for each such $H_l$ there exists a filtration element $G_p$ such that for each filtration element $G_q$ we have $[G_p] \ne [G_q \union H_l]$. This property fails precisely in the setting of (Principal Endpoints) where $j=j_{\ell-1} < j_\ell=j+2$ and one or both of the edges $E_{j+1}$, $E_{j+2}$ is fixed. If this happens, for each such value of $\ell$ pick just one of $E_{j+1},E_{j+2}$ which is fixed and collapse it, producing a quotient map $q \from G \to G'$ which is a homotopy equivalence, inducing a marked graph structure on $G'$. There is a unique homotopy equivalence $f'' \from G' \to G'$ such that $q \composed f = f'' \composed q$, and let $f' \from G' \to G'$ be obtained from $f''$ by straightening edges ($f'$ is obtained directly from $f$ by ``collapsing an invariant forest'' as described on page 7 of \BH). This new homotopy equivalence $f' \from G' \to G'$ still represents $\phi$ and it satisfies the conclusions of \recognition\ Theorem 2.19. Applying \recognition\ Lemma 3.28, each periodic Nielsen path of $f'$ has period one. But the $q$-image in $G'$ of a periodic Nielsen path for $f$ with period greater than one is a periodic Nielsen path for $f'$ with period greater than one, so there are no such periodic Nielsen paths which proves~\pref{ItemPeriodOne}.
\end{proof}

Consider a subgroup $\cH' \subgroup \cQ'(G)$. We say that an edge or a point in $G$ is \emph{universally fixed} if it is fixed by each $f \in \cH'$, and that a path in $G$ with endpoints at vertices is \emph{universally Nielsen} if it is a Nielsen path for each $f \in \cH'$. A universal Nielsen path is an \emph{indivisible universal Nielsen path} if it cannot be written as a concatenation of two non-trivial universal Nielsen paths. Note that an indivisible universal Nielsen path need not be an \iNp\ for each $f \in \cH'$. 

Our next lemma gives additional conclusions for Proposition~\ref{PropKolchinGraph}, in the presence of additional hypotheses which will arise from application of Lemma~\ref{LemmaFIX}:

\begin{lemma} \label{LemmaGoodKolchinGraph} 
Continuing with the hypotheses and notations of Proposition~\ref{PropKolchinGraph}, suppose in addition that the set of universal Nielsen paths crosses every edge of $G$. Then the filtered marked graph $G$ and the subgroup $\cH' \subgroup \cQ'(G)$ may be altered so that the conclusions of Proposition~\ref{PropKolchinGraph} continue to hold and the following additional conclusion holds: 
\begin{enumeratecontinue}
\item \label{ItemNEGNielsenPathsUniv} For each edge $E_j \subset G$ one of the following holds:
\begin{enumerate}
\item \label{ItemEjUnivFixed}
$E_j$ is universally fixed; or 
\item \label{ItemEjUnivNP}
There is a universal closed Nielsen path $w_j$ in $G_{j-1}$ such that for each $f \in \cH'$ we have $f(E_j) = E_j w_j^{d_j(f)}$ where $d_j(f) \in \Z$. Moreover, the paths $\{E_j w_j^p \bar E_j \suchthat p \ne 0\}$ are the only indivisible universal Nielsen paths of height $j$.
\end{enumerate}
\end{enumeratecontinue}
\end{lemma}

\begin{proof} 
We shall modify $G$ and $\cH'$ by downward induction on $j$ to arrange \pref{ItemNEGNielsenPathsUniv} and maintain the property that the set $\C$ of universal Nielsen paths covers $G$.

If $\gamma \in \C$ has height $j$ then each term of the highest edge splitting of $\gamma$ is an element of $\C$ and is either a basic path of height $j$ or a path of height strictly less than $j$. In particular, any indivisible element of $\C$ of height $j$ is a basic path of height $j$. 

The maximality hypothesis on $\emptyset = \F_0 \sqsubset \F_1 \sqsubset \cdots \sqsubset \F_k = \{[F_n]\}$ implies a weak version of the conclusion, namely that for each edge $E_j$, if $\alpha \in \C$ is a closed indivisible universal Nielsen path of height $j$ then either $\alpha = E_j $ or $\alpha = E_j \mu \bar E_j$ for some nontrivial path $\mu$ in $G_{j-1}$. To see why, as noted above $\alpha$ is a basic path of height~$j$, and so we need only rule out the case that $\alpha$ (or $\alpha^\inv$) has the form $E_j \mu$ where $\mu$ is nontrivial in $G_{j-1}$.  Proposition~\ref{PropKolchinGraph}~\pref{ItemWeakFiltration} implies that $G_{j-2} \subset G_j$ are consecutive core graphs   and so by maximality realize   $\F_{l-1} \sqsubset \F_l$ for some $l$.   There is a basis element $a \in F_n$ such that $[a]$ is the conjugacy class determined by $\alpha$ and such that  $\F_{\ell-1} \cup \{[a]\}$ is a free factor system that is contained in $\F_l$  and properly contains $\F_{\ell-1}$.  But then maximality implies that $\F_\ell = \F_{\ell-1} \cup \{[a]\}$ which contradicts the fact that we are in  case (b) of Definition~\ref{DefWeakFiltration}.

We now turn to the downward induction argument: fixing $i$, and assuming that \pref{ItemNEGNielsenPathsUniv} holds for $j > i$ and that $\C$ crosses every edge of $G$ (which is obvious if $i$ is the maximal height), we either show that $E_i$ already satisfies \pref{ItemNEGNielsenPathsUniv}, or we adjust the construction until it is satisfied, maintaining the fact that $\C$ covers $G$. 
Let $x$ and $y$ be the initial and terminal endpoints of $E_i$. 

We may assume that that $y \in G_{i-1}$, because if not then by Proposition~\ref{PropKolchinGraph}~\pref{ItemWeakFiltration} the edge $E_i$ forms a loop component of $G_i$ and so is universally fixed.

\medskip

\noindent\textbf{Claim 1: Some element of $\C$ has height $i$.} By the inductive hypothesis, $E_i$ is crossed by an element of $\C$ with height $k \ge i$. To verify the claim, we assume that  $k > i$ and prove that $E_i$ is crossed by some $\alpha' \in \C$ with height $< k$. There is no loss in assuming that $\alpha$ is an indivisible element of $\C$.
Applying \pref{ItemNEGNielsenPathsUniv} for height $k$, the moreover part of \pref{ItemEjUnivNP} implies that $\alpha = E_k w_k^p \bar E_k$ for some $p \ne 0$ and some closed universal Nielsen path $w_k$ of height $<k$, and so $E_i$ is crossed by $\alpha' = w_k \in \C$, completing the inductive proof of Claim 1.


\newcommand\wtchp{\wt{\cH}'}

\medskip

To prepare for Claim~2, consider the universal cover $\wt G$ of $G$ with its deck transformation action by $\pi_1(G) \approx F_n$. Choose a lift $\ti x \in \wt G$ of $x$, let $\wt E_i \subset \wt G$ be the lift of $E_i$ with initial vertex $\ti x$, and let $\ti y$ be the terminal vertex of $\wt E_i$, so $\ti y$ is a lift of $y$. Let $\Gamma_{i-1} \subset \wt G$ be the component of the full pre-image of $G_{i-1}$ that contains $\ti y$. For each $f \in \cH'$ let $\ti f : \wt G \to \wt G$ be the lift of $f$ that fixes~$\ti x$. Let $\wtchp = \{\ti f \suchthat f \in \cH'\}$. Note that $\Gamma_{i-1}$ is $\ti f$-invariant for each $\ti f \in \wtchp$. A point $\ti z \in \Gamma_{i-1}$ is said to be \emph{universally fixed} if it is fixed by each $\ti f \in \wtchp$. 

Consider a line $\ti L \subset \Gamma_{i-1}$. If $\ldots \ti \mu_i \cdot \ti \mu_{i+1} \cdot \ti \mu_{i+2}\ldots$ is the highest edge splitting of $\ti L$ then $\ldots \ti f_\#(\mu_i)\cdot \ti f_\#(\ti \mu_{i+1}) \cdot \ti f_\#(\ti \mu_{i+2})\ldots$ is the highest edge splitting of the line $\ti f_\#(\ti L) \subset \Gamma_{i-1}$. Furthermore, if $\ti f_\#$ fixes $\ti L$ (equivalently, $\ti f$ fixes both points in $\bdy \ti L$) then these two splittings are equal so there exists $\tau \in \Z$ such that $\ti f_\#(\ti \mu_i) = \ti \mu_{i + \tau}$ for all $i$; it follows that $\ti f$ preserves the sequence $\V(\ti L) = \{\ti v_i\}$ of highest edge splitting vertices where $\ti v_i = \ti \mu_i \cap \ti \mu_{i+1}$, and that $\ti f(\ti v_i) = \ti v_{i +\tau}$. Note that if $\V(\ti L)$ is not bi-infinite then $\tau = 0$. 

Let $\T$ be the subgroup of covering translations such that each non-trivial element $T \from \wt G \to \wt G$ of $\T$ commutes with each $\ti f \in \wtchp$ and has axis $A(T)$ contained in $ \Gamma_{i-1}$. Given any covering translation $T$ with nontrivial axis $A(T) \subset \Gamma_{i-1}$ we have $T \in \T$ if and only if each $\ti f_\#$ fixes each point of $\bdy A(T)$ by \BookThree\ Lemma~2.4, in which case the previous paragraph applies to $\V(T)=\V(A(T))$. In particular, if $T^i \in T$ for some $i \ne 0$ then $T \in \T$. 

\medskip
\noindent\textbf{Claim 2: Either some $\ti z \in \Gamma_{i-1}$ is universally fixed or $\T$ is infinite cyclic.} By~Claim~1, we may pick a basic path $\alpha \in \C$ with height $i$. Orient $\alpha$ so that its first edge is $E_i$. Lift $\alpha$ to a path $\ti \alpha$ with initial endpoint $\ti x$ and terminal endpoint $\ti z$ and note that $\ti z$ is fixed by each~$\ti f \in \wtchp$. If $\alpha=E_i\mu$ then $\ti z \in \Gamma_{i-1}$ and we are done. 

Henceforth we assume $\Gamma_{i-1}$ has no universal fixed point, so $\alpha = E_i \mu \overline E_i$ is closed. Let $T \from \wt G \to \wt G$ be the nontrivial covering translation that carries $\ti x$ to $\ti z$. For each $\ti f \in \wtchp$ we have $T \ti f (\ti x) = T(\ti x) = \ti z = \ti f(\ti z) = \ti f T(\ti x)$. It follows that $T$ commutes with $\ti f$, because $T \ti f$ and $\ti fT$ are lifts of $f$ that agree on a point and are hence equal. The axis $A(T)$ is contained in $\Gamma_{i-1}$ because $\mu \subset G_{i-1}$. Thus $T \in \T$, proving that $\T$ is nonempty. 

Choose $T_0 \in \T$ so that the axis $A(T_0)$ has maximal height. To prove that $\T$ is infinite cyclic, we suppose that there exists $T_1 \in \T$ with $A(T_1) \ne A(T_0)$ and argue to a contradiction. Using that no point of $\Gamma_{i-1}$ is universally fixed, choose $\ti f' \in \wt\cH'$ so that it acts as a non-trivial translation on $\V(T_0)$. Under iteration by $\ti f'$, the elements of $\V(T_0)$ converge to some $Q \in \bdy A(T_0) \subset \partial \Gamma_{i-1}$. Choose $P \in \partial A(T_1) \subset \bdy\Gamma_{i-1}$ so that the elements of $\V(T_1)$ are not moving away from $P$ under iteration by $\ti f'$. The line $\ti L$ with endpoints $P$ and $Q$ is $\ti f'_\#$-invariant so its highest edge splitting vertices $\V(\ti L)$ are preserved by $\ti f'$. Since the height of $A(T_0)$ is greater than or equal to the height of $A(T_1)$, we have $\V(T_0) \cap \ti L \subset \V(\ti L)$ and $\ti f'$ moves the elements of $\V(\ti L)$ toward $Q$. As noted above, this implies that $\V(\ti L)$ is bi-infinite and so $A(T_1)$ has the the same height as $Q(T_0)$ and the elements of $\V(T_1) \cap L \subset \V(\ti L)$ are moved away from $P$. This contradiction completes the proof of Claim~2. 
 
\medskip
 
In any case where $\T$ is infinite cyclic, let $T_0 \in \T$ denote a generator. Note that $\A(T)$ and $\V(T)$ are independent of the choice of nontrivial $T \in \T$, and let them be denoted $A(\T)$ and $\V(\T)$ respectively. Note also that the group $\T$ is the stabilizer of $A(\T)$ under the action of the deck transformation group because, as seen earlier, if $T^i \in T$ then $T \in T$. 
 
We are now able to complete the proof in two special cases, each expressing a special property of the terminal point $\ti y$ of $\wt E_i$. First, if $\ti y$ is universally fixed then $E_i$ is universally fixed and (a) is satisfied. 

The second special case is that $\ti y \in \V(T)$ and that there does not exist a universal fixed point in $\Gamma_{i-1}$. From this we show that (b) is satisfied. Let $\ti w_i$ be the path from $\ti y$ to $T_0(\ti y)$ and let $w_i$ be the projection of~$\ti w_i$, a closed path which forms a circuit $c$ and which is not an iterate of any shorter closed path. Consider $\ti f \in \wtchp$. Since $\ti f_\#(A(\T))=A(\T)$ it follows that $f_\#(c)=c$ and so the terms of the highest edge splitting of $c$ are cyclically permuted by~$f_\#$. Applying Proposition~\ref{PropKolchinGraph}~\pref{ItemPeriodOne} it follows that each of those terms is fixed by $f_\#$. This shows that $f_\#(w_i)=w_i$, from which it follows that $\ti f(\ti y) = T^{d_i(f)}(\ti y)$ for some integer $d_i(f)$, and so $f(E_i) = E_i w_i^{d_i(f)}$. 
%
%
For the moreover part of (b), suppose that $\alpha \in \C$ has height $i$ and is indivisible. Let $\ti \alpha$ be the lift of $\alpha$ with initial endpoint $\ti x$. As noted above, $\alpha$ is a basic path of height $i$. If $\alpha = E_i \mu$ for some $\mu \subset G_{i-1}$ then the terminal endpoint $\ti z$ of $\ti \alpha$ is a universal fixed point in $\Gamma_{i-1}$ which contradicts the assumptions of this special case. Thus $\alpha = E_i \mu \overline E_i$ for some $\mu \subset G_{i-1}$. The covering translation $T$ that maps $\ti x$ to $\ti z$ is an element of $\T$ and so $T = T_0^p$ for some $p \ne 0$. Since $\mu$ lifts to the path connecting $\ti y \in \V(\T)$ to $T(\ti y) =T_0^p(\ti y) \in V(T_0)$, it follows that $\mu = w_i^p$ as desired.

For the general case, we will reduce to one of the two special cases by modifying $G$, sliding the terminal end of $\wt E_i$ to some point $\ti v \in \Gamma$ as described on pages 579--581 of section 5.4 of \BookOne. Choose $\ti v$ as follows, based on properties of $\ti v$, of the path $\ti\sigma$ from $\ti y$ to $\ti v$, and of its projection $\sigma$. If possible, choose $\ti v$ to be universally fixed, and make the choice so that $\ti v$ is the only universally fixed point on $\ti\sigma$, equivalently $E_i \sigma$ is an indivisible universal Nielsen path. Otherwise, using by Claim~2 that $\T$ is infinite cyclic, choose $\ti v \in \V(\T)$.  Let $\ti \sigma$ be the path from $\ti y$ to $\ti v$, and let $\sigma$ be the projected image in $G_{i-1}$ of $\ti\sigma$ (note that the ``special case'' is characterized by saying that we may choose $\ti v$ so that $\sigma$ is trivial).  Note also that $v$ is contained in the core of $G_{i-1}$.  If $\ti v \in \V(\T)$ this follows from      $\ti v \in \A(T_0)$.  
If $\ti v$ is universally fixed then it follows from the \lq weak version of the conclusion\rq\ established at the beginning of the proof, which implies that the indivisible universal Nielsen path $E_i \sigma$ is not a closed curve. 

  One slides $E_i$ along $\sigma$ by a folding operation, identifying a proper initial segment of $\overline E_i$ with $\sigma$. The effect is that $E_i$ is replaced by an edge $E'_i$ such that its lift $\wt E'_i$ with initial endpoint $\ti x$ has terminal endpoint~$\ti v$. This folding operation produces a new marked graph $G'$ and homotopy equivalences $p \from G \to G'$ and $p' \from G' \to G$ that are the \lq identity\rq\ on the common edges of $G$ and $G'$, that are homotopy inverses of each other and that preserve the markings. Also, under these homotopy equivalences there is a filtration by oriented one-edge strata on $G'$ corresponding to the given one on $G$ with $E_i$ replaced by $E'_i$. 

For each $f \in \cQ'(G)$ there is an $f' \in \cQ'(G')$ such that $f'(E') = (pfp')_\#(E')$ for each edge $E'$ of $G'$. This induces an isomorphism from $\cH' \subgroup \cQ'(G)$ to a subgroup of $\cQ'(G')$, which we continue to call $\cH'$. The new $\cH'$ continues to satisfy all the conclusions
of Proposition~\ref{PropKolchinGraph}. It is shown in the last paragraph on page 580 of \BookOne\ that $p_\#$ induces a period preserving bijection between the periodic Nielsen paths of $f$ and the periodic Nielsen paths of $f'$. This implies \pref{ItemPeriodOne} and implies that the inductive hypothesis still applies to our new $\cH'$. As discussed on the top of page 581, the lift of $f'$ to $\ti f'$ is still defined and $\ti f' \restrict \Gamma_{i-1} = \ti f \restrict \Gamma_{i-1}$. The positive effect for us is that that the terminal end of $E_i'$ is now $\ti v$ and we are reduced to one of the two special case. The proofs of all these assertions are routine applications of sliding and are left to the reader. 

This completes the inductive step, and hence the proof, of Lemma~\ref{LemmaGoodKolchinGraph}.
\end{proof}

%

\begin{proof}[Proof of Proposition~\ref{PropUPG}.] Let $\cH \subgroup K$ be UPG. We shall prove that each finitely generated subgroup $\cH_0 \subgroup \cH$ is abelian and that each element of $\cH_0$ is linearly growing. This completes the proof, because it follows that $\cH$ itself is abelian and that each of its elements is linearly growing, and one then applies the theorem that every abelian subgroup of $\Out(F_n)$ is finitely generated (see \cite{BassLubotzky:LinearCentral}, or \BookThree).

Pick any maximal $\cH_0$-invariant filtration by free factor systems $\emptyset = \F_0 \sqsubset \F_1 \sqsubset \cdots \sqsubset \F_N$ such that for some $ m \in \{0,\ldots,N\}$ we have $\F_\agen = \F_m$. Applying Proposition~\ref{PropKolchinGraph} we obtain $G$, a filtered marked graph with oriented one-edge strata, in which $ \F_\agen$ is realized by a filtration element $ G_t$, and we obtain a lift of $\cH_0$ to $\cH'_0 \subgroup \cQ'(G)$, and these satisfy the conclusions of Proposition~\ref{PropKolchinGraph}. 

For each $\phi \in \cH_0$ with corresponding lift $f \in \cH'_0$, since $f$ has no \eg\ strata, since $f \in \cQ'(G)$ and so is upper triangular, and since Proposition~\ref{PropKolchinGraph}~\pref{ItemPeriodOne} holds, the relative train track map $f$ satisfies the hypotheses of Lemma~\ref{LemmaFIX}, and its conclusions follow. From its conclusions~\pref{ItemSplitsIntoNielsen} combined with the fact that $\gamma_\eta$ fills, the terms of the highest edge splitting of $\gamma_\eta$ in $G$ form a collection of universal Nielsen paths which cover every edge of $G$. 
 The hypotheses of Lemma~\ref{LemmaGoodKolchinGraph} are therefore satisfied, and its conclusions follows. 

From conclusion~\pref{ItemNEGNielsenPathsUniv} of Lemma~\ref{LemmaGoodKolchinGraph} it follows that the elements of $\cH'_0$ commute with each other. Since $\cH_0$ is isomorphic to $\cH'_0$, it is also abelian. Item~\pref{ItemNEGNielsenPathsUniv} of Lemma~\ref{LemmaGoodKolchinGraph} also implies that each element of $\cH_0$ is linearly growing. 
\end{proof}

\subsection{Ordering attracting laminations} 
\label{SectionOrderingAttrLams}
In various places we will need to compare the subgroup systems in different free factors $F \subgroup F_n$ using the relation $\sqsubset$. But the $\sqsubset$ relation itself is not defined between subgroup systems in different free groups. To get around this, for any subgroup system $\A$ in $F$ we may identify $\A$ with a subgroup system in the ambient free group $F_n$, using malnormality of $F$ to obtain a \hbox{$\sqsubset$-preserving} bijection between the set of $F$-conjugacy classes of subgroups of~$F$ and the set of $F_n$-conjugacy classes of subgroups of~$F$.

 Let $\L(F_n)$ be the set of ordered pairs $(\Lambda,\phi)$ such that $\phi \in \Out(F_n)$ and $\Lambda \in \L(\phi)$. We usually abbreviate the ordered pair using subscript notation $\Lambda_\phi$; the meaning should be clear by context. When $\phi$ is understood we abbreviate further and write simply $\Lambda$.

Consider $\Lambda_\phi \in \L(F_n)$. Consider also a $\phi$-invariant free factor system of the form \hbox{$\F=\{[F]\}$} that carries~$\Lambda_\phi$. We may identify $\Lambda_\phi$ with an attracting lamination of $\phi \restrict F \in \Out(F)$ whose nonattracting subgroup system in the free group $F$ is denoted $\A_\na(\Lambda_\phi \restrict F)$. Then, as said above, we may also identify $\A_\na(\Lambda_\phi \restrict F)$ as a subgroup system in any free factor containing $F$.

The proof of Theorem~\ref{ThmFinitelyGenerated} is structured in part as an induction based on the following strict partial ordering on the set $\L(F_n)$. 
\begin{notn} \label{PartialOrder} We write $\Lambda_\phi \prec  \Lambda_\psi $ if either
\begin{enumerate}
\item $\F(\Lambda_\phi)$ is properly contained in $\F(\Lambda_\psi)$, \ \ or 
\item\label{ItemLaminationOrderSecond}
$\F(\Lambda_\phi) = \F(\Lambda_\psi)$ and, letting $\F$ denote this 
subgroup system, $\A_\na(\Lambda_{\psi} \restrict \F)$ is properly contained in $\A_\na(\Lambda_{\phi} \restrict \F)$.
\end{enumerate}
\end{notn}

\begin{lemma} \label{LemmaUniformBound} There is a uniform bound, depending only on $n$, to the length $s$ of a strictly ordered sequence $\Lambda_{\psi_1} \prec \Lambda_{\psi_2} \prec  \ldots \prec  \Lambda_{\psi_s}$ in $\L(F_n)$.
\end{lemma}

\begin{proof} Denote $\Lambda_{\psi_i}$ by $\Lambda_i$. Since $\rank(\F(\Lambda_{i}))$ is strictly increasing and bounded above by~$n$, we may assume that (2) holds for each term in the sequence. Under that assumption there is a free factor $F_r \subgroup F$ such that $\F(\Lambda_i) = \F = \{[F_r]\}$ is independent of $i$. Also, each $\A_{\na}(\Lambda_{i} \restrict \F)$ is a vertex group system for~$F_r$ by Proposition 1.4 of \SubgroupsThree. The uniform bound on $s$ comes then from the uniform bound (Proposition 3.2 of \SubgroupsOne) on the length of a strictly nested sequence of vertex groups.
 \end{proof}
 
\subparagraph{Remark.} As noted in the proof of Lemma~\ref{LemmaUniformBound}, if $\F=\{[F]\}$ is a free factor supporting $\A_\na(\Lambda_\phi)$ then $\A_\na(\Lambda_\phi)$ is a vertex group system in $F$. One can also show that for any free factor system of the form $\F'=\{[F']\} \sqsupset \F$, the subgroup system in $F'$ identified with $\A_\na(\Lambda_\phi)$ is also a vertex group system; we omit the proof since we do not need this fact.

\subsection{Geometric models.} 
\label{SectionGeomModels}
We review further details of ``geometric models'' and definitions of geometricity for \eg\ strata and for attracting laminations. 

Consider a rotationless $\phi \in \Out(F_n)$, a representative \ct\ $f \from G \to G$, and an \eg\ stratum~$H_r$. A \emph{weak geometric model} for $H_r$ consists of a compact connected surface~$S$, a component $\bdy_0 S$ of $\bdy S$, a pseudo-Anosov homeomorphism $\Psi \from S \to S$ taking $\bdy_0 S$ to $\bdy_0 S$, a 2-complex $Y$ obtained as a quotient $j \from G_{r-1} \disjunion S \to Y$ by using a gluing map $\alpha \from \bdy S - \bdy_0 S \to G_{r-1}$ that is $\pi_1$-injective on each component, and an extension of the embedding $G_{r-1} \inject Y$ to an embedding $G_r \inject Y$, such that the following properties hold: there is a deformation retraction $d \from Y \mapsto G_r$; 
denoting $dj = d \composed j \from S \to G_r$, the maps $S \xrightarrow{\Psi} S \xrightarrow{dj} G_r$ and $S \xrightarrow{dj} G_r \xrightarrow{f} G_r$ are homotopic; the intersection of $G_r$ with $\bdy_0 S$ in $Y$ is a single point $x \in H_r - G_{r-1}$; and the closed path based at $x$ that goes around $\bdy_0 S$ is homotopic in $Y$ to a closed indivisible Nielsen path in $G$. A \emph{geometric model} is obtained as the quotient $X$ of $Y \disjunion G$ by identifying the copies of $G_{r}$ in $Y$ and in $G$; the deformation retraction extends to $d \from X \to G$. The stratum $H_r$ is \emph{geometric} if a weak geometric model for $H_r$ exists; if this is the case, then the map $dj \from S \to G$ is $\pi_1$-injective, and if $\Lambda^u \subset S$ is the unstable geodesic lamination of $\Psi$ with respect to some hyperbolic structure on $S$ then the map of line spaces $\B(\pi_1 S) \to \B(\pi_1 G) = \B(F_n)$ induced by $dj$ takes the set of leaves of $\Lambda^u$ homeomorphically to the attracting lamination $\Lambda$ corresponding to $H_r$. As said earlier, an attracting lamination $\Lambda \in \L(\phi)$ is \emph{geometric} if the \eg\ stratum corresponding to $\Lambda$ in \emph{some} representative \ct\ $\phi$ is geometric, which occurs if and only if this holds for \emph{every} representative \ct. See Section~\ref{SectionBackground} for a brief review of conditions on $H_r$ equivalent to geometricity, and see Section 5.3 of \BookOne\ and Section 2 of \SubgroupsOne\ for more details.

 In $X$ define the \emph{complementary subgraph} to be $L = (G \setminus H_r) \union \bdy_0 S$.
 
 
\begin{fact} \label{Fact:PropertiesOfL} The inclusion $L \inject X$ is $\pi_1$-injective on each component and the image subgroups are mutually malnormal. If $\A_\na\Lambda$ fills then the following hold: \begin{itemize}
\item[(i)] $\A_\na\Lambda$ is represented in $X$ by the subgraph $L$, meaning $\A_\na\Lambda$ equals the set of conjugacy classes of the image subgroups. 
\item[(ii)] $L$ is disjoint from the manifold interior of $S$, and the latter is therefore an open subset of~$X$.
\end{itemize}
\end{fact}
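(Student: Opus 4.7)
My plan is to analyze $X$ through its natural graph-of-spaces decomposition and then verify each clause separately. Concretely, $X$ decomposes with $S$ as one vertex space, the components of $G_{r-1}$ as other vertex spaces, and the circles $\partial S - \partial_0 S$ as edge spaces attached via $\alpha$, with additional attaching data from the point $x \in \partial_0 S$ to the edges of $H_r$. Since $\alpha$ is $\pi_1$-injective on each boundary component by hypothesis, standard Bass--Serre theory yields that $\pi_1(S) \to \pi_1(X) = F_n$ and $\pi_1$ of each component of $G_{r-1}$ both inject.

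For the first claim on $L$, I will treat the two types of components of $L$ separately. Components of $G \setminus H_r$ inject because they sit inside components of $G_{r-1}$ (identifying $G \setminus H_r$ with $G_{r-1}$ in the top-stratum case). The remaining component $\partial_0 S$ is a boundary circle of a hyperbolic surface and so injects into $\pi_1(S)$, hence into $F_n$. For malnormality I pass to the Bass--Serre tree $T$ of the decomposition: components of $L$ lying in distinct vertex spaces have stabilizers that intersect only in an edge group, which is trivial in the cases that arise here after using $\pi_1$-injectivity of $\alpha$; meanwhile $\partial_0 S$ and any other peripheral circle of $S$ give rise to mutually malnormal cyclic subgroups of $\pi_1(S)$ by the classical malnormality of peripheral subgroups in hyperbolic surfaces with boundary, and this malnormality propagates to $F_n$ through the graph-of-spaces structure.

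For~(i), I will invoke the Background characterization of the nonattracting subgroup system for a top geometric stratum: $\A_\na(\Lambda) = [G_{r-1}] \cup \{[\langle\rho\rangle]\}$, where $\rho$ is the unique indivisible Nielsen path of height $r$. The construction of the weak geometric model requires that the closed loop around $\partial_0 S$ based at $x$ be homotopic in $Y$ (hence in $X$) to an indivisible Nielsen path in $G$; by the uniqueness of that path this homotopy class is $[\rho]$, so the image subgroup of $\pi_1(\partial_0 S)$ in $F_n$ is conjugate to $\langle\rho\rangle$. Together with the obvious identification of components of $G \setminus H_r = G_{r-1}$ with the free factor system $[G_{r-1}]$, this gives the claimed equality of subgroup systems. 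For~(ii), the decomposition $L = (G \setminus H_r) \cup \partial_0 S$ shows $L \subset G \cup \partial S$; since $S$ is glued to $G$ only along $\partial S$ we get $L \cap \Int(S) = \emptyset$, and the openness of $\Int(S)$ in $X$ follows from $\Int(S)$ being open in $S$ and meeting the rest of $X$ only through $\partial S$.

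The main obstacle is the malnormality statement, which requires carefully unpacking the graph-of-spaces decomposition and combining the $\pi_1$-injectivity of $\alpha$ with the classical malnormality of peripheral cyclic subgroups in $\pi_1(S)$ to produce malnormality inside $F_n$. The filling hypothesis on $\A_\na(\Lambda)$ is used essentially only in~(i) to guarantee the clean form of $\A_\na(\Lambda)$ quoted above, and the $\pi_1$-injectivity and malnormality assertions appear to hold without it.
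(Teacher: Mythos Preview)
Your argument tacitly restricts to the case where $H_r$ is the top stratum, so that $G \setminus H_r = G_{r-1}$; you even flag this parenthetically. But the Fact is stated for a general \eg\ geometric stratum, and in that setting $L = (G \setminus H_r) \cup \partial_0 S$ contains, in addition to $G_{r-1}$ and $\partial_0 S$, every edge of $G$ of height~$> r$. This matters for both (i) and (ii), and in both places the filling hypothesis does real work that your argument misses.

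For (i), the Background description $\A_\na(\Lambda) = [G_{r-1}] \cup \{[\langle\rho\rangle]\}$ you invoke is valid only for the top stratum. In general one knows only that $\A_\na(\Lambda)$ is represented by some subgraph $K \subset L$ containing $G_{r-1} \cup \partial_0 S$; a~priori $K$ may omit edges of $G \setminus G_r$. The paper closes this gap with the filling hypothesis: were any such edge missing from $K$, then $[K] = \A_\na(\Lambda)$ would be carried by a proper free factor system.

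For (ii), your assertion that ``$S$ is glued to $G$ only along $\partial S$'' is not part of the geometric-model axioms. The embedding $G_r \hookrightarrow Y$ extending $G_{r-1} \hookrightarrow Y$ is allowed to carry vertices of $H_r$ into $\Int(S)$; an edge of $G \setminus G_r$ attached at such a vertex then yields a point of $L$ in $\Int(S)$. The paper rules this out, again via filling: at any such attaching point one can modify the model by inserting an edge to pull $L$ off $\Int(S)$, thereby exhibiting $[L] = \A_\na(\Lambda)$ as carried by a proper free factor. So your closing remark that filling is ``used essentially only in (i)'' is the reverse of the truth --- in the top-stratum case you treat, (i) needs no filling hypothesis at all, while in the general case filling is essential for both (i) and (ii).

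Your Bass--Serre sketch for $\pi_1$-injectivity and mutual malnormality is along the right lines and is essentially what is packaged in \SubgroupsOne\ Lemma~2.7, which the paper simply cites; note however that once $H_r$ is not the top stratum the components of $G \setminus H_r$ are no longer just the components of $G_{r-1}$, so that sketch too needs adjustment.
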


\begin{proof} $\pi_1$-injectivity and mutual malnormality follow from Lemma~2.7 of \SubgroupsOne. To prove (i), by \SubgroupsThree\ Definition~1.2 and Remark~1.3 it follows that $\A_\na(\Lambda_m)$ is represented by a subgraph $K \subset L$ that contains $G_{r-1} \union \bdy_0 S$. By the assumption that $\A_\na\Lambda$ fills it follows that the subgraph $K$ must also contain every edge of $G \setminus G_{r}$, and so $K=L$. The proof of (ii) follows from \SubgroupsOne\ Definition 2.10, which shows that at any ``attaching point'' where $L$ touches the interior of $S$, one can pull $L$ away from $S$ inserting an edge, constructing a topological model which exhibits $L$ as being supported on a proper free factor, contrary to the hypothesis that $\A_\na\Lambda$ fills.
\end{proof}

When $\Lambda \in \L(\phi)$ is geometric, and when a geometric model is specified as above, the surface $S$ together with the associated monomorphism $\mu = dj_* \from \pi_1(S) \inject F_n$ will be called the \emph{surface system associated to $\Lambda$ with respect to the geometric model for $\phi$}. The 
following fact says, in essence, that the surface system associated to 
$\Lambda$ is well-defined independent of the choice of geometric model. 

\begin{fact}
\label{FactSurfaceUnique}
Consider 
$\phi \in \Out(F_n)$ and a geometric $\Lambda \in \L(\phi)$ with the property that $\A_\na\Lambda$ fills~$F_n$. Let $\mu^i \from \pi_1(S^i) \inject F_n$ ($i=1,2$) be the two surface systems associated to $\Lambda$ with respect to geometric models of \eg\ strata corresponding to $\Lambda$ in two \cts\ representing rotationless iterates of~$\phi$. Then there exists a homeomorphism $h \from S^1 \to S^2$, unique up to isotopy, and an inner automorphism $i \from F_n \to F_n$, such that $i \composed \mu^1 = \mu^2 \composed h_* \from \pi_1(S^1) \to F_n$.
\end{fact}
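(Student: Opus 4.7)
The plan is to prove Fact~\ref{FactSurfaceUnique} in three stages: (1) identify $\mu^i(\pi_1(S^i)) \subset F_n$ up to $F_n$-conjugation, producing the inner automorphism $i$; (2) show that the resulting isomorphism of surface groups preserves peripheral structure and invoke the Dehn--Nielsen--Baer theorem to produce $h$; (3) observe that uniqueness of $h$ up to isotopy is part of the Dehn--Nielsen--Baer conclusion.

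For stage~(1), each geometric model $X^i$ decomposes as $X^i = S^i \cup L^i$, glued along $\bdy S^i$, which induces a graph-of-groups decomposition of $F_n = \pi_1(X^i)$. By Fact~\ref{Fact:PropertiesOfL}, the inclusion $L^i \inject X^i$ is $\pi_1$-injective on each component with mutually malnormal image subgroups, and by part~(i), when $\A_\na(\Lambda)$ fills, the non-surface vertex groups of this decomposition are exactly the components of $\A_\na(\Lambda)$. The edge groups are infinite cyclic, generated by the boundary components of $S^i$; in particular $\bdy_0 S^i$ is attached at the rank-one component $[\<\rho^i\>]$ of $\A_\na(\Lambda)$, whose $F_n$-conjugacy class does not depend on the choice of \ct. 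Because $\A_\na(\Lambda)$ is intrinsic to $\phi$ and $\Lambda$, malnormal, and fills $F_n$, a JSJ/Grushko-type uniqueness for the splitting of $F_n$ relative to the malnormal filling system $\A_\na(\Lambda)$ yields that the surface vertex group $\mu^i(\pi_1(S^i))$ is determined up to $F_n$-conjugation. Choose an inner automorphism $i$ so that $i(\mu^1(\pi_1(S^1))) = \mu^2(\pi_1(S^2))$ as subgroups of $F_n$.

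For stage~(2), set $\alpha = (\mu^2)^{-1} \composed i \composed \mu^1 \from \pi_1(S^1) \to \pi_1(S^2)$. To apply Dehn--Nielsen--Baer for compact surfaces with nonempty boundary, I must verify that $\alpha$ preserves peripheral structure, i.e.\ that it sends each conjugacy class of a boundary component of $S^1$ to a conjugacy class of a boundary component of $S^2$. By Nielsen--Thurston theory the $\Psi^i$-fixed conjugacy classes in $\pi_1(S^i)$ are precisely the boundary classes (a pseudo-Anosov fixes no interior conjugacy class, and rotationlessness of $\phi$ handles orientations), so under $\mu^i$ they are characterized intrinsically as the $\phi^k$-fixed $F_n$-conjugacy classes carried by $\mu^i(\pi_1(S^i))$. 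Since $i \composed \mu^1$ and $\mu^2$ have the same image subgroup, they give the same set of peripheral $F_n$-conjugacy classes, and so $\alpha$ preserves peripheral structure. The Dehn--Nielsen--Baer theorem then provides a homeomorphism $h \from S^1 \to S^2$ with $h_* = \alpha$, unique up to isotopy, and by construction $i \composed \mu^1 = \mu^2 \composed h_*$.

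The main obstacle is stage~(1): one needs a clean version of the JSJ/Grushko rigidity for the graph-of-groups decomposition of $F_n$ determined by the malnormal filling system $\A_\na(\Lambda)$ together with the prescribed peripheral data. If a direct JSJ citation proves awkward, an alternative is to characterize $\mu^i(\pi_1(S^i))$ dynamically through the lamination: each leaf of $\Lambda$ lifts, in the geometric model, to a leaf of the unstable lamination of $\Psi^i$ in $S^i$, so the surface subgroup contains all of $\Lambda$ in an intrinsic sense, and together with the intrinsicness of $\A_\na(\Lambda)$ this pins down $\mu^i(\pi_1(S^i))$ up to conjugation.
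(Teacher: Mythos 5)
Your stage (1) is exactly where the paper has to do real work, and in your write-up it is asserted rather than proved. You invoke ``a JSJ/Grushko-type uniqueness for the splitting of $F_n$ relative to the malnormal filling system $\A_\na(\Lambda)$'' to conclude that the two surface vertex groups agree up to conjugacy, but no such statement is established or cited, and you yourself flag it as the main obstacle; the fallback you sketch (the surface group ``contains all of $\Lambda$'' plus intrinsicness of $\A_\na(\Lambda)$) does not pin down a subgroup up to conjugacy either --- carrying the leaves of a lamination is a property of many subgroups. The paper fills precisely this gap by a concrete topological argument: since $\A_\na(\Lambda)$ fills, Fact~\ref{Fact:PropertiesOfL}(i) identifies $\A_\na(\Lambda)$ with the complementary subgraph $L^i$ of each geometric model, so a marking-change homotopy equivalence $X^1 \to X^2$ can be homotoped (homotopy extension) to a homotopy equivalence of pairs $(X^1,L^1) \to (X^2,L^2)$; then Fact~\ref{FactNonreflexiveHE}, proved via a graph-of-spaces decomposition and Bass--Serre theory, shows that this map carries $\pi_1(S^1)$ isomorphically onto $\pi_1(S^2)$ and takes peripheral elements to peripheral elements, after which Dehn--Nielsen--Baer produces $h$. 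Note in particular the paper's remark that the corresponding step of \SubgroupsOne\ Lemma~2.21 used \BookOne\ Lemma~6.0.6, which is unavailable here: even the surjectivity of the induced map on surface groups requires the two-sided argument with a homotopy inverse, so this is not a step one can wave through.

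Two smaller points. First, your peripheral-structure argument in stage (2) is in the right spirit (non-peripheral classes carried by $[\pi_1 S^i]$ are weakly attracted to $\Lambda$, hence not $\phi$-periodic, while boundary classes are $\phi$-fixed after passing to a rotationless power), but as written it quietly moves between $\pi_1(S^i)$-conjugacy and $F_n$-conjugacy; since $\pi_1(S^i)$ is not malnormal in $F_n$, this needs to be phrased at the level of weak attraction of $F_n$-classes, which is how the paper handles the analogous point. Second, uniqueness of $h$ up to isotopy is not simply ``part of the Dehn--Nielsen--Baer conclusion'': two candidate homeomorphisms a priori satisfy $\mu^2 \composed h_*$ and $\mu^2 \composed h'_*$ equal up to an inner automorphism of $F_n$, and one needs the fact that $\mu^2(\pi_1 S^2)$ is its own normalizer in $F_n$ to convert this into an inner automorphism of $\pi_1(S^2)$ before Dehn--Nielsen--Baer applies; this is the first paragraph of the paper's proof.
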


\begin{proof} To prove isotopy uniqueness of $h$, given another such $h' \from S^1 \to S^2$, the two monomorphisms $\mu^2_* \, \composed h_*$, $\mu^2_* \, \composed h'_* \from \pi_1(S^1) \to F_n$ differ by postcomposing with an inner automorphism of $F_n$, and so the two monomorphisms $\mu^2_*$, $\mu^2_* \composed h'_* \composed h^\inv_* \from \pi_1(S^2) \to F_n$ differ by postcomposing with inner automorphism of $F_n$. Since the image of $\pi_1(S^2)$ in $F_n$ is its own normalizer (\SubgroupsThree\ Lemma~2.7~(2)), the latter two monomorphisms differ by precomposition with an inner automorphism of $\pi_1(S^2)$. It follows that $h'_* \composed h^\inv_*$ is itself an inner automorphism of $\pi_1(S^2)$. By the Dehn-Nielsen-Baer theorem \cite{FarbMargalit:primer}, $h'h^\inv $ is isotopic to the identity.

We turn to proof of existence of $h$. For $i=1,2$ let $f^i \from G^i \to G^i$ be \cts\ representing a rotationless power of $\phi$ with \eg\ strata $H^i_{r_i}$ associated to $\Lambda$, and let $X^i$ be a geometric model for $H^i_{r_i}$ with all accompanying notations indicated with a superscript~$i$, so that $\mu^i = dj^i \from \pi_1(S^i) \to \pi_1(G^i) \approx F_n$ is the associated surface system. From any marking change map $g \from G^1 \to G^2$ we obtain a marking change map $X^1 \xrightarrow{d^1} G^1 \xrightarrow{g} G^2 \subset X^2$ also denoted $g \from X^1 \to X^2$. Let $\rho^i$ be the indivisible Nielsen path in $G^i$ of height $r_i$, with base point $x^i = H^i_{r_i} \intersect \bdy_0 S^i$ and let $L^i = (G^i \setminus H^i_{r_i}) \union \bdy_0 S^i$ be the complementary subgraph. 

By Fact~\ref{Fact:PropertiesOfL}, $[L^1] = \A_\na\Lambda=[L^2]$. Thus by the homotopy extension theorem we may homotope the marking change map $g \from X^1 \to X^2$ so as to take $L^1$ to $L^2$ by a homotopy equivalence. By Fact~\ref{Fact:PropertiesOfL} we may apply the following fact to the homotopy equivalence of pairs $g \from (X^1,L^1) \to (X^2,L^2)$:

\begin{fact}\label{FactNonreflexiveHE}
For $i=1,2$ let $K^i$ be a finite graph, $S^i$ a compact connected surface with nonempty boundary and negative Euler characteristic, $\beta^i \from \bdy S^i \to K^i$ a map that is $\pi_1$-injective on each component, and $j^i \from K^i \disjunion S^i \to X^i$ the quotient map to a finite connected complex defined by identifying $x \sim \beta^i(x)$ for each $x \in \bdy S^i$. For any homotopy equivalence of pairs $g \from (X^1,K^1) \to (X^2,K^2)$ there exists a homeomorphism $h \from S^1 \to S^2$ such that the maps $j^2 \composed h$, $g \composed j^1$ are homotopic. 
\end{fact}
\noindent
Putting off its proof for a moment, from the conclusion of Fact~\ref{FactNonreflexiveHE} it follows that the maps $dj^2 \composed h, \, g \composed dj^1 \from S^1 \to G^2$ are homotopic, and so the two monomorphisms the two monomorphisms $dj^2_* \composed h_*$, $dj^1_* \composed g_* = dj^1_*\from \pi_1 S^1 \to \pi_1 G^2 = F_n$ differ by postcomposing with an inner automorphism of~$F_n$. This completes the proof of Fact~\ref{FactSurfaceUnique}. \end{proof}

\begin{proof}[Proof of Fact~\ref{FactNonreflexiveHE}] This proof is very close to that of \SubgroupsOne\ Lemma 2.21, which is the special case that equations $X^1=X^2$ and $K^1=K^2$ hold, but one major step is different. Denote $g^1=g$, which we may assume is simplicial, by the simplicial approximation theorem. Following \cite{ScottWall}, decompose $X^1$ into a graph of spaces: the edge spaces are the interiors of the components of a regular neighborhood $N(\bdy S^1)$; one vertex space is $\closure(S^1-N(\bdy S^1))$ whose fundamental group is identified with $\pi_1(S^1)$; the other vertex spaces are the components of~$K^1$. By a ``curve'' we mean a homotopically nontrivial closed curve. Applying Bass-Serre theory to this graph of spaces, and using that curves in distinct components of $\bdy S^1$ are not homotopic in $S^1$, one concludes the following: for each curve $c$ in $S^1$, if $c$ is homotopic in $X^1$ to a curve $c'$ in $S^1$ such that $c,c'$ are not homotopic in $S^1$, or if $c$ is homotopic in $X^1$ to a curve in $K^1$, then $c$ is homotopic into $\bdy S^1$; and if two curves in $K^1$ are homotopic in $X^1$ then they are homotopic in $K^1$. 

Consider the subcomplex $Z^1 = (g^1)^\inv(K^2) \subset X^1$ which contains $K^1$. If $c$ is a nonperipheral curve in $\interior(S^1)$ then $c$ is not contained in $Z^1$ because if it were then there would be a curve $c'$ in $K^1$ such that $g^1(c),g^1(c')$ are homotopic, forcing $c,c'$ to be homotopic, a contradiction. Each component of $Z^1$ is therefore contained either in a neighborhood of a component of $\bdy S^1$ or in a subdisc of $S^1$. Components of $Z^1$ of the latter type can be removed by homotopy of $g^1$. It follows that, with appropriate choice of base points, $(g^1)_*(\pi_1 S^1) \subset \pi_1 S^2$. Using a homotopy inverse $g^2 \from (X^2,K^2) \to (X^1,K^1)$, the exact same argument applies with the superscripts $1,2$ reversed. It follows in turn that $(g^1)_*(\pi_1 S^1) = \pi_1 S^2$ (the corresponding step of \SubgroupsOne\ Lemma 2.21 applies \BookOne\ Lemma~6.0.6 which does not apply here). This isomorphism takes peripheral elements of $\pi_1 S^1$ to peripheral elements of $\pi_1 S^2$, and so by the Dehn-Nielsen-Baer theorem it is induced by a homeomorphism $h \from S^1 \to S^2$, which evidently satisfies the conclusions of the lemma.
\end{proof}

While the following is a well known to experts, we give a simple direct proof for completeness, and to motivate the proof given below of Lemma~\ref{LemmaMultiedge}~\pref{item:carried by S}.

\begin{fact} \label{FactLamsAreLams} Suppose that $S$ is a surface with boundary and that $\mu \in \mcg(S)$ corresponds to $\phi\in \Out(F_n)$ under an identification of $\pi_1(S)$ with $F_n$. Then $\L(\phi)$ is the set of unstable laminations for a Thurston decomposition of $\mu$.
\end{fact}

\begin{proof} After passing to an iterate of $\mu$ we may assume that each unstable lamination $\Lambda_\mu$ for $\mu$ has a $\mu$-invariant leaf $\gamma_\mu$. That leaf is birecurrent and non-periodic because $\Lambda_\mu$ is minimal and not a closed curve. The line $\gamma_\mu$ has an attracting neighborhood $U$ in the weak topology for the action of $\mu$: letting $\ti\mu \from \wt S \to \wt S$ be a lift of $\mu$ for which there is a $\ti\mu$-invariant lift $\ti\gamma_\mu$ of $\gamma_\mu$, by Nielsen Theory the endpoints of $\ti\gamma_\mu$ form an attracting set for the action of $\ti\mu$ on $\bdy\pi_1 S$, and so a small neighborhood of this attracting set produces the desired set $U$. It follows that $\Lambda_\mu \in \L(\phi)$ by Definition 3.1.5 of \BookOne. 

Suppose now that $\Lambda_\phi \in \L(\phi)$. Choose a hyperbolic structure on $S$ in which the reducing curves in the Thurston decomposition of $\mu$ are geodesic. After passing to an iterate we may assume that components in the Thurston decomposition are invariant, that the restriction of $\mu$ to each component is either the identity or pseudo-Anosov, that $\phi$ is rotationless and that there is a $\mu$-invariant generic leaf $\gamma_\phi$ in the realization of $\Lambda_\phi$ in $S$. Suppose that $[a] \in F_n$ is weakly attracted to $\Lambda_\phi$ under the action of $\phi$. For each $k \ge 0$, let $\alpha_k$ be the closed geodesic in $ S$ corresponding to $\phi^k([a]) =\mu^k([\alpha_0])$. The number of intersections of $\alpha_k$ with the set of reducing curves of the Thurston decomposition is independent of $k$ while the length of $\alpha_k$ goes to $\infty$. It follows that the ends of the geodesic in $S$ corresponding to any weak limit of the $\alpha_k$'s are disjoint from the reducing curves. Applying this to the birecurrent weak limit $\gamma_\phi$ we have that $\gamma_\phi$ is realized by a geodesic that is disjoint from the reducing curves and so contained in a single, necessarily pseudo-Anosov, component $S_0$ of the Thurston decomposition. We may assume without loss that $\alpha \subset S_0$.
 
Letting $\Lambda^u_0$ and $\Lambda^s_0$ be the unstable and stable foliations for $\mu \restrict S_0$, we may homotop $\alpha_0$ to a closed curve $\alpha_0'$ that it is an alternating concatenation of geodesic paths $\sigma_i \subset \Lambda^u_0$ and $\tau_i \subset \Lambda^s_0$. By iterating the pseudo-Anosov homeomorphism representing $\mu \restrict S_0$, we obtain closed curves $\alpha'_k$ realizing $\phi^k([a])$ that decompose into a concatenation of geodesic paths in $\Lambda^u_0$ whose lengths $\to \infty$ and paths in $\Lambda^s_0$ whose lengths $\to 0$. It follows that any birecurrent weak limit of the $\alpha'_k$'s is a weak limit of $\Lambda^u_0$, and so by minimality, is a leaf of $\Lambda^u_0$. Thus $\gamma_\phi$ is dense in both $\Lambda^u_0$ and $\Lambda_\phi$ so these two laminations are equal and we are done. 
\end{proof}

 Associated to a compact surface $S$ with nonempty boundary are its mapping class group $\mcg(S)$, which is the group of homeomorphisms of $S$ modulo isotopy, and its boundary relative mapping class group $\mcg(S,\bdy S)$, which is the group of all homeomorphisms that restrict to the identity on $\bdy S$ modulo isotopy through such homeomorphisms. We denote the finite index subgroup of $\mcg(S)$ consisting of elements that setwise fix each component of $\partial S$ by $\mcg_0(S)$. The induced surjective homomorphism $\mcg(S,\bdy S) \to \mcg_0(S)$ will be called the \emph{despinning homomorphism}.

Given $\Lambda_\phi \in \L(F_n)$ with associated surface system $\pi_1 S \inject F_n$, we identify $\pi_1 S$ as a subgroup $\pi_1 S \subgroup F_n$ with conjugacy class $[\pi_1 S]$ and stabilizer subgroup $\Stab[\pi_1 S] \subgroup \Out(F_n)$. The subgroup $\pi_1 S$ is its own normalizer in $F_n$ (\SubgroupsOne\ Lemma 2.7~(2)), and so there is a well-defined induced homomorphism $\Stab[\pi_1 S] \to \Out(\pi_1 S)$ (\SubgroupsOne\ Fact~1.4). Associated to each oriented component of $\bdy S$ is a conjugacy class in the group $\pi_1 S$ called a \emph{peripheral conjugacy class}. According to the Dehn-Nielsen-Baer Theorem the subgroup of $\Out(\pi_1 S)$ that preserves the set of peripheral conjugacy classes is naturally isomorphic to $\mcg(S)$. 

\begin{ex} \label{surface example} Suppose that $F_{m+2} = F_m \ast \<A,B\>$ and that $s \in F_m$ fills $F_m$. In Example~\ref{filling reducible}, we constructed an element $\phi \in \Out(F_{m+2})$ with a filling lamination $\Lambda$ whose stabilizer contains every element $\theta \in \Out(F_{m+2})$ that is represented by an automorphism $\Theta$ that fixes each element of $\<A,B,s\>$. Here we refine the construction to produce an example in which the stabilizer of $\Lambda$ contains a boundary relative mapping class group.

Identify $F_m$ with $\pi_1(S,v)$ for some surface $S$ with unique boundary component $\sigma$. Let $X$ be the two complex obtained by attaching a pair of loops $A, B$ to a basepoint $v$ in $\sigma$ and identify $\pi_1(X,v)$ with $F_{m+2} = F_m \ast \<A,B\>$. Let $s \in F_{m+2}$ be the element determined by $\sigma$ and note that $s$ fills $F_m$ (Lemma~2.5 of \SubgroupsOne\ (1)). Each $\nu \in \mcg(S,\partial S)$ is represented by a homeomorphism $h:S \to S$ that pointwise fixes $\sigma$ and so extends by the identity on $A$ and $B$ to a homotopy equivalence of $X$ whose induced action on $\pi_1(X,v)$ fixes $\<\sigma,A,B\>$. There is an induced injective homomorphism $\mcg(S,\partial S) \to \Out(F_{m+2})$ whose image is contained in the stabilizer of $\Lambda$. 
\end{ex} 


\begin{lemma} \label{LemmaMultiedge} Under the ``Standing assumptions of Section~\ref{SectionExpansionKernel}'', suppose that $\cH \subgroup K$, that $\emptyset = \F_0 \sqsubset \F_1 \sqsubset \cdots \sqsubset \F_k = \{[F_n]\}$ is a maximal $\cH$-invariant filtration by properly nested free factor systems and that $\F_\agen$ is one of the $\F_i$'s. Let $1 \le i_1 < \cdots < i_M \le k$ be the indices $i$ for which $\F_{i-1} \sqsubset \F_i$ is a multi-edge extension. Suppose also that a rotationless $\phi_m \in \cH$ is given for each $m=1,\ldots,M$ so that $\phi_m$ is irreducible rel $\F_{i_m-1} \sqsubset \F_{i_m}$ with associated attracting lamination~$\Lambda_m$. Noting by Lemma~\ref{LemmaFIX} that each $\Lambda_m$ is geometric and that each $\A_\na(\Lambda_m)$ fills~$F_n$, let $\pi_1 S_m \inject F_n$ be the surface system associated to $\Lambda_m \in \L(F_n)$ (which is well-defined in the sense of Fact~\ref{FactSurfaceUnique}). 
\begin{enumerate}
\item \label{item:maps to mcg} 
$\cH$ stabilizes each $[\pi_1 S_m]$, and the induced homomorphism $\cH \mapsto \Stab[\pi_1 S_m] \inject \Out(\pi_1 S_m)$ has image in $\mcg(S_m)$, inducing a homomorphism $\xi_m : \cH \to \mcg(S_m)$.
\item \label{item:carried by a surface} For each $\psi \in \cH$ and $\Lambda_\psi \in \L(\psi)$ there exists $m \in \{1,\ldots, M\}$ such that the following hold:
\begin{enumerate}
\item \label{item:carried by S}$\Lambda_\psi$ is carried by $[\pi_1 S_m]$.
\item \label{item:nestinglams}
Either $\Lambda_\psi \prec \Lambda_m$ or $\psi$ is irreducible rel $\F_{i_m-1} \sqsubset 
\F_{i_m}$. 
\item \label{item:kernel of xi} 
$\xi_m (\psi)$ is non-trivial. 
\end{enumerate}
\item \label{item:despinning}
There is a homomorphism 
$\Theta \from \prod_1^M \mcg(S_m,\bdy S_m) \to \Ker(PF) \cap \bigcap_1^M \Stab[\pi_1 S_m]
$ \break
such that its composition with the homomorphism
$\bigcap_1^M \Stab[\pi_1 S_m] \to \prod_1^M \Out(\pi_1 S_m)
$
has image in $\prod_1^M \mcg(S_m)$, and such that the composition $\prod_1^M \mcg(S_m,\bdy S_m) \to \prod_1^M \mcg(S_m)$ is the product of the despinning homomorphisms and in particular has finite index image.
\end{enumerate}
\end{lemma}

\begin{proof} Throughout this proof, by applying the existence theorem (Theorem 4.28 of \recognition) for \cts, we choose the \cts\ representing all rotationless elements of $\cH$ to have core filtration elements representing the free factor systems $\F_1 \sqsubset\cdots\sqsubset \F_k$.

For the proofs of \pref{item:maps to mcg}, \pref{item:carried by a surface} we set the notation of appropriate geometric models. For each $m=1,\ldots,M$ choose a \ct\ $f^m \from G^m \to G^m$ representing $\phi_m$. Let $H^m_{r(m)} \subset G^m$ be the \eg\ geometric stratum corresponding to $\Lambda_m$, having height $r(m)$ in~$G^m$. Let $Y_m \subset X_m$ be the weak geometric model and the geometric model, respectively, for the stratum $H^m_{r(m)}$ with respect to~$f^m$; we may assume that the surface system $\pi_1 S_m \to F_n$ that is given in the hypothesis arises from the surface $S_m$ of that geometric model. Denote its upper boundary by $\bdy_0 S_m$, and let $\bdy_\low S_m = \bdy S_m - \bdy_0 S_m$ denote its \emph{lower boundary}. Thus, there is a quotient map $j_m \from G^m_{r(m)-1} \disjunion S_m \to Y_m$ defined by attaching each component of $\bdy_\low S_m$ to $G^m_{r(m)-1}$ by a $\pi_1$-injective map, the embedding $G^m_{r(m)-1} \inject Y_m$ extends to an embedding $G^m_{r(m)} \to Y_m$, and $X_m$ is the quotient of $Y_m \disjunion G^m$ by identifying the copies of $G^m_{r(m)}$. Let $L_m = (G^m \setminus H^m_{r(m)}) \union \bdy_0 S_m$ be the complementary subgraph.

\medskip

Conclusion~\pref{item:maps to mcg} exactly matches the conclusion of Theorem~J of \SubgroupsFour, although that theorem is stated only for the case when the geometric stratum in question is the top stratum. But we easily reduce to that case by restricting $\phi_m$ to the component $\F' = \{[F']\}$ of $\F_{i_m}$ that carries $\Lambda_m$: the restriction $\phi' = \phi_m \restrict F' \in \Out(F')$ is rotationless; it is represented by the \ct\ $f' \from G' \to G'$ which is the restriction of $f^m$ to the component $G^m$ containing $H' = H^m_{r(m)}$, and $H'$ is its top stratum; the component $Y'$ of $Y_m$ that contains $H'$ is a geometric model for $H'$ relative to $f'$; and $S_m$ is the surface of that geometric model. Theorem~J then indeed applies in that restricted context, and its conclusions immediately imply item~\pref{item:maps to mcg}.

\medskip

For the proof of conclusion~\pref{item:carried by a surface}, fix $\psi \in \cH$ and $\Lambda_\psi \in \L(\psi)$. Passing to a power we may assume $\psi$ is rotationless. Choose a \ct \ representing $\psi$ with core filtration elements $G_{a(i)}$ representing each $\F_{i}$. Let $H_s$ be the stratum corresponding to $\Lambda_\psi$, choose $i$ so that $a(i-1) < s \le a(i)$ (taking $a(0)=0$). Then $\Lambda_\psi$ is carried by $\F_i$ but not by $\F_{i-1}$ and hence $\F_{i-1} \sqsubset \F_i$ is a multi-edge extension. In particular, $i=i_m$ for some $m=1,\ldots,M$, and $\Lambda_\psi$~is not carried by $\F_{i_m-1}$.

Applying \pref{item:maps to mcg}, $\psi$ preserves $[\pi_1 S_m]$ and restricts to a mapping class $\psi_m=\xi_m(\psi) \in \MCG(S_m) \subgroup \Out(\pi_1 S_m)$. As in the proof of (1), we may assume that $S_m$ corresponds to the highest stratum of $f^m :G^m \to G^m$, equivalently we may assume $X_m = Y_m$. Let $N_m \subset S_m$ be a collar neighborhood of $\bdy_\low S_m$ and let $S^*_m = \cl(S_m - N_m)$. The quotient map $j_m$ restricts to an embedding of $S^*_m$ as a closed subset of $Y_m$ and to an embedding of $\interior(N_m)$ as an open subset of $Y_m$ such that $Y_m - \interior(N_m)$ is the disjoint union of $G^m_{r(m)-1}$ and $S^*_m$. 

Choose a homeomorphism $\bdy_\low S_m \cross [0,1] \approx N_m$ under which $(x,0) \approx x$, and define an \emph{arc fiber in $N_m$} to be the image under this homeomorphism of a set of the form $x \cross [0,1]$ for some $x \in \bdy_\low S_m$. Each arc fiber in $N_m$ has one endpoint on $\bdy_\low S_m$ and opposite endpoint on $\bdy_\low S^*_m \approx \bdy_\low S_m \times 1$. Also define an \emph{arc fiber in $Y_m$} to be the embedded image under $j_m$ of an arc fiber in $S_m$, having one endpoint in $G^m_{r(m)-1}$ and opposite endpoint on $\bdy_\low S^*_m$. There is a deformation restriction $S_m \to S^*_m$ which collapses each arc fiber in $N_m$ to its endpoint on $\bdy_\low S^*_m$; under this homotopy equivalence we have an induced isomorphism $\mcg(S^*_m) \approx \mcg(S_m)$. Also, there is a deformation retraction from $\closure(Y_m - \ S^*_m) = G^m_{r(m)-1} \union j(N_m)$ to $G^m_{r(m)-1}$ which collapses each arc fiber in $Y_m$ to its endpoint on $G^m_{r(m)-1}$.  

As in the proof of Fact~\ref{FactSurfaceUnique}, the homotopy extension theorem and Fact~\ref{FactNonreflexiveHE} implies the existence of a homotopy equivalence $\Psi \from Y_m \to Y_m$ representing $\psi$ such that $\Psi$ restricts to a homeomorphism of $S^*_m$, and to a homeomorphism of $\bdy_\low S^*_m$, and to a self-homotopy equivalence of $G^m_{r(m)-1}$ (namely the restriction of $f^m$), and its restriction $\interior(N_m)$ has image contained in $\interior(N_m) \union G^m_{r(m)-1}$.

Our strategy for proving \pref{item:carried by S} is to choose a closed curve $\gamma$ representing a conjugacy class $[\gamma$] that is weakly attracted to $\Lambda_\psi$ under iteration by $\Psi$ and to prove that each birecurrent weak limit of $\psi$-iterates of $[\gamma]$ is carried by either $S_m$ or $G^m_{r(m)-1}$. Since a generic leaf of $\Lambda_\psi$ cannot satisfy the latter it must satisfy the former and we will be done. This strategy is similar to the proof of Fact~\ref{FactLamsAreLams} above. To carry out this strategy we have to understand how $\Psi$-iterates of $\gamma$ intersect $\bdy_\low S^*_m$. 

Following \SubgroupsOne\ Section~2.2 (but ignoring the top boundary $\bdy_0 S_m$), we obtain a graph of spaces decomposition of $Y_m$, having as edge spaces the components of $\interior(N_m)$, and as vertex spaces the \emph{surface vertex space} $S^*_m$ and the components of $G^m_{r(m)-1}$ called the \emph{non-surface vertex spaces}. 

Define a \emph{vertex path} in $Y_m$ to be a finite path which intersects $\bdy_- S^*_m$ solely at its endpoints. A vertex path is \emph{essential} if it is not homotopic rel endpoints into $\bdy_- S^*_m$. A vertex path contained in $S^*_m$ it is called a \emph{surface vertex path}, and one in $\closure(Y_m - \ S^*_m)$ is called a \emph{non-surface vertex path}. Every essential non-surface vertex path is homotopic rel endpoints to a unique path of the form $\alpha * q * \alpha'$ where $\alpha'$ is an arc fiber in $Y_m$ oriented to have initial endpoint on $\bdy_- S^*_m$, $q$ is a path in $G^m_{r(m)-1}$, and $\alpha$ is an arc fiber in $Y_m$ oriented to have terminal endpoint on $\bdy_- S^*_m$.

Also define an \emph{essential vertex ray} in $Y_m$ to be a ray $[0,\infty) \to Y_m$ intersecting $\bdy_- S^*_m$ solely at its endpoint, whose lift to the universal cover is a proper map. As above we may speak about \emph{surface vertex rays} and \emph{nonsurface vertex rays}. 

Every homotopically nontrivial closed curve $\gamma$ in $Y_m$ may be homotoped so that it is \emph{efficient} meaning that $\gamma$ has one of the following types:
\begin{description}
\item[Edge curve:] $\gamma$ is contained in $\bdy_\low S^*_m$; \, or
\item[Surface vertex curve:] $\gamma$ is contained in the surface vertex space $S^*_m$, it is homotopically nontrivial in $S^*_m$, and it is not homotopic to an edge curve; \, or
\item[Nonsurface vertex curve:] $\gamma$ is contained in a nonsurface vertex space, it is homotopically nontrivial in that vertex space, and is not homotopic to an edge curve; \, or
\item[Efficient concatenation:] $\gamma$ is an alternating concatenation of essential surface vertex paths and essential non-surface vertex paths.
%
\end{description}
By mimicking similar arguments for surfaces, one can prove that for every pair of efficient closed curves $\gamma,\gamma'$ in $Y_m$, if $\gamma,\gamma'$ are homotopic then they are homotopic through efficient closed curves of the same type, and in particular $\gamma,\gamma'$ have the same type. In the case that $\gamma,\gamma'$ are both efficient concatenations, it follows that their concatenation expressions have the same number of terms and (up to cyclic permutation) the same sequence of essential vertex paths up to homotopy through vertex paths. This may be proved by starting with a homotopy $h \from S^1 \times [0,1] \to Y_m$ between $\gamma$ and $\gamma'$, chosen (after a perturbation rel $S^1 \times \{0,1\}$) so as to be in general position with respect to $\bdy_- S^*_m$. Note that since $\gamma,\gamma'$ are essential, no component of the pullback of $\bdy_- S^*_m$ is an arc whose endpoints are either both on $S^1 \times 0$ or both on $S \times 1$. One can then do a further homotopy of $h$ rel $S^1 \times \{0,1\}$ so as to remove circle components of the pullback of $\bdy_- S^*_m$.




Consider now any efficient closed curve $\gamma$ whose corresponding conjugacy class in $F_n$ is weakly attracted to $\Lambda_\psi$ under iteration by $\psi$, and so in particular a generic leaf $L$ of $\Lambda_\psi$ is a weak limit of $\psi^i[\gamma]$. The images $\Psi^i(\gamma)$ are also efficient closed curves, all of the same type as~$\gamma$.
%
%
If $\gamma$ is an edge curve or a nonsurface vertex curve then so are all of its $\Psi$-iterates and their weak limits, contradicting that $L$ is not carried by $G^m_{r(m)-1}$. If $\gamma$ is a surface vertex curve then so are all of its $\Psi$-iterates and we are done. 

We are reduced to the case that $\gamma$ has an efficient concatenation, denoted (up to cyclic permutation) as
$$\gamma = \nu_1\, \mu_1 \, \cdots \, \nu_k \, \mu_k
$$
where each $\nu_k$ is an essential surface vertex curve and each $\mu_k$ is an essential non-surface vertex curve. For each $i \ge 1$ we have an efficient concatenation expression of the same type, namely
$$\Psi^i(\gamma) = \alpha^i_1\,  \mu^i_1\,  \cdots\,  \alpha^i_{2K}\,  \mu^i_{2K}
$$
Since the length of the sequence of conjugacy classes $[\psi^i(\gamma)]$ goes to infinity, it follows that the line $L$ may be represented by an efficient concatenation with at most $2K$ terms. If the number of terms is $\ge 2$ then the first and last terms are essential rays, but since $L$ is birecurrent we get a contradiction. It follows that $L$ is a single term and hence is contained in one of the vertex spaces, completing the proof of \pref {item:carried by S}.

Fact~\ref{FactLamsAreLams} implies that $\Lambda_\psi$ is the unstable foliation for a pseudo-Anosov component $S' \subset S_m$ in the Thurston decomposition for $\xi_m(\psi)$. In particular, \pref{item:kernel of xi} is satisfied. 

For \pref{item:nestinglams} let $\C(S_m)$ be the set of conjugacy classes of $F_n$ carried by $[\pi_1(S_m)]$. The \lq span argument\rq\ for geometric strata (see Lemma 7.0.7 of \BookOne\ or Proposition 2.15~(4) of \SubgroupsOne) implies that $\F(\Lambda_m)$ carries each element of $\C(S_m)$.
It follows by \pref{item:carried by S} that $\Lambda_\psi$ is a weak limit of elements of $\C(S_m)$, and so $\Lambda_\psi$ is carried by $\F(\Lambda_m)$, proving that $\F(\Lambda_\psi) \sqsubset \F(\Lambda_m)$. If $\F(\Lambda_\psi) \ne \F(\Lambda_m)$ then $\Lambda_\psi \prec \Lambda_m$ and we are done so suppose that $\F(\Lambda_\psi) = \F(\Lambda_m)$, which we now simply call $\F$. 

Consider now  $\A_\na(\Lambda_{m} \restrict \F)$ regarded as a subgroup system in $F_n$. A conjugacy class in $F_n$ is carried by $\A_\na(\Lambda_{m} \restrict \F)$ if and only if it is carried by $\A_\na(\Lambda_{m})$ and by $\F$. We know that $\F \sqsubset \F_{i_m}$ and that a conjugacy class in $\F_{i_m}$ is carried by $\A_\na(\Lambda_{m})$ if and only if it is carried by $\F_{i_m -1}$ or is represented by an iterate of $\partial_0 S_m$. It follows that a conjugacy class is carried by $\A_\na(\Lambda_m \restrict \F)$ if and only if it is carried by the meet $\F \wedge \F_{i_m -1}$ or is represented by an iterate of $\partial_0 S_m$. Each such conjugacy class is carried by $\A_\na(\Lambda_\psi)$, which we can see as follows. First, applying conclusion~\pref{item:maps to mcg}, the mapping class $\xi_m(\psi) \in \MCG(S_m)$ permutes boundary components, and so each iterate of $\bdy_0 S_m$ is $\psi$-periodic and so is not weakly attracted to $\Lambda_\psi$. Second, the set of conjugacy classes carried by $\F_{i_m-1}$ is $\psi$-invariant, and hence no sequence of them can weakly converge to $\Lambda_\psi$ which is not carried by $\F_{i_m-1}$. This proves that $\A_\na(\Lambda_m \restrict \F) \sqsubset \A_\na(\Lambda_{\psi} \restrict \F)$. If $\A_\na(\Lambda_m \restrict \F) \ne \A_\na(\Lambda_{\psi} \restrict \F)$ then $\Lambda_\psi \prec \Lambda_m$ and we are done so suppose that $\A_\na(\Lambda_m \restrict \F) = \A_\na(\Lambda_{\psi} \restrict \F)$. In this case every non-peripheral element of $[\pi_1(S_m)]$ is weakly attracted to $\Lambda_\psi$ so $\xi_m(\psi)$ is a pseudo-Anosov mapping class and to complete the proof of \pref{item:carried by a surface} it remains to show that $\psi$ is 
irreducible relative to $F_{i_m-1} \sqsubset F_{i_m}$. 

A conjugacy class $[a]$ carried by $\F_{i_m}$ but not by $\A_\na(\Lambda_m)$ is represented by a closed curve $\gamma$ that is either contained in $S_{m}$ or has an efficient representation in $Y_m$ with at least one term $\mu_j$ that is a surface vertex path. By Nielsen--Thurston theory (specifically \SubgroupsOne\ Proposition~2.14), either $\gamma$ is homotopic to a power of $\partial_0 S_m$ or $\psi^i([a])$ weakly converges to~$\Lambda_\psi$. This proves that $\A_\na(\Lambda_{\psi} \restrict \F_{i_m}) = \F(G_{i_m-1}) \cup \< \partial_0S_m\>$. 

 If $\F' \sqsubset \F_{i_m}$ is a $\psi$-invariant free factor system that properly contains $\F_{i_m-1}$ then $\F'$ contains a conjugacy that is not carried by $\A_\na(\Lambda_{\psi})$ and so $\F'$ also carries $\Lambda_\psi$. Lemma~7.0.7 and Corollary~7.0.8 of \BookOne\ therefore imply that $\F'$ carries $[\pi_1(S_m)]$ and hence also $\Lambda_m$. Since $\phi$ is irreducible rel $\F_{i_m-1} \sqsubset 
\F_{i_m}$, it must be that $\F' = \F_{i_m}$ and so $\psi$ is irreducible rel $\F_{i_m-1} \sqsubset 
\F_{i_m}$. This completes the proof of~\pref{item:nestinglams} and hence of~\pref{item:carried by a surface}.

\medskip

The first step in proving \pref{item:despinning} is to show that if $m \ne m'$ then $[\pi_1 S_{m'}]$ is carried by $\A_\na(\Lambda_{m}) $. 
 This is obvious for $m' < m$ because $[\pi_1 S_{m'}]$ is carried by $\F_{i_{m'}}\sqsubset \F_{i_{m-1}} \sqsubset \A_\na(\Lambda_{m})$. On the other hand, if $m' > m$ then it follows from the geometric model for $\phi_{m'}$ that $\Lambda_m$ is not carried by $[\pi_1 S_{m'}]$. It then follows from the $\phi_m$-invariance of $[\pi_1 S_{m'}]$ that no conjugacy class in $[\pi_1 S_{m'}]$ is weakly attracted to $\Lambda_m$ and so each conjugacy class in $[\pi_1 S_{m'}]$ is carried by $\A_\na(\Lambda_{m}) $ as desired.

 For each $m \in \{1,\ldots, M\}$ and each $\nu \in \mcg(S_{m }, \partial S_{m })$ let $g({m },\nu) :X_{m } \to X_{m }$ be a homotopy equivalence that restricts to the identity on $L_{m }$ and to a homeomorphism that represents $\nu$ on $S_{m }$; let $\theta({m},\nu)$ be the element of $\Out(F_n)$ determined by $g({m},\nu)$. Lemma~\ref{LemmaFIX}\pref{ItemGammaEtaANA} and Fact~\ref{Fact:PropertiesOfL} imply that $\A_\na(\Lambda_{m})$ is realized by $L_m$ and carries $\gamma_\eta$. It follows that $\theta({m},\nu)$ fixes each leaf of $\Lambda^+_\eta$ and fixes each conjugacy class carried by $[\pi_1 S_{m'}]$ for $m\ne m'$. The former implies that $\theta({m},\nu) \in \Ker(PF)$ and the latter implies that $\theta({m},\nu)$ preserves $[\pi_1 S_{m'}]$ and induces the identity element of $\mcg(S_{m'})$ for each $m' \ne m$. By construction $\theta({m},\nu)$ preserves $[\pi_1 S_{m}]$ and induces the element of $\mcg(S_{m})$ that is the image of $\nu$ under the despinning homomorphism. 
 
 \medskip
 
 Define $\Theta \from \prod_1^M \mcg(S_m,\bdy S_m) \to \Ker(PF) \cap \bigcap_1^M \Stab[\pi_1 S_m]
$ by 
$$\Theta(\nu_1,\ldots,\nu_M) = \theta(1,\nu_1)\composed \theta(2,\nu_2)\composed \ldots \composed \theta(M,\nu_M)$$
The restriction of $\Theta(\nu_1,\ldots,\nu_M) $ to each $[\pi_1(S_m)]$ is a well defined element of $\mcg(S_m)$ so we have an induced homomorphism $$\prod_1^M \mcg(S_m,\bdy S_m) \to \prod_1^M \mcg(S_m)$$ which by construction satisfies
$$\Theta(\nu_1,\ldots,\nu_M) \mapsto (\nu_1',\ldots, \nu_M')$$
 where $\nu'_m$ is the image of $\nu_m$ under the despinning homomorphism. This completes the proof of \pref{item:despinning} and also the proof of lemma.
\end{proof}

\subsection{Proof of Theorem~\ref{ThmFinitelyGenerated}, and some corollaries.}
\label{SectionKernelProof}
Enumerate $K = \{\psi_a \suchthat a=1,2,\ldots\}$ and define the nested sequence of finitely generated subgroups $A^1 \subgroup A^2 \subgroup A^3 \subgroup \cdots$ where $A^a = \<\psi_1,\ldots,\psi_a\>$. Note that $K = \union A^a$ is finitely generated if and only if $A^a=K$ for some $a$. 

Consider one of the finitely generated subgroups $A = A^a$. Choose $\emptyset = \F_0 \sqsubset \F_1 \sqsubset \cdots \sqsubset \F_I = \{[F_n]\}$ to be a maximal $A$-invariant filtration by free factor systems, one of which is $\F_\agen$. Let $1 \le i_1 < \cdots < i_M \le I$ be the indices $i$ for which $\F_{i-1} \sqsubset \F_i$ is a multi-edge extension. By Theorem D of \SubgroupsZero\ for each $m=1,\ldots,M$ there exist $\phi_m \in A$ such that $\phi_m$ is irreducible rel $\F_{i_m-1} \sqsubset 
\F_{i_m}$; let $\Lambda_{\phi_m}$ be the corresponding geometric (by Lemma~\ref{LemmaFIX}) element of $\L(\phi_m)$, and let $\pi_1(S_m) \inject F_n$ be the associated surface system. Applying Lemma~\ref{LemmaMultiedge} to $\cH = A$ using $\phi_1,\ldots,\phi_M \in A$, for each $m=1,\ldots,M$ let $\xi_m : A \to \mcg(S_m)$ be the homomorphism given by Lemma~\ref{LemmaMultiedge}~\pref{item:maps to mcg}. Define $\xi = (\xi_{1},\ldots, \xi_{M}) \from A \to \mcg(S_{1}) \times \cdots \times \mcg(S_{M})$. Item Lemma~\ref{LemmaMultiedge}~\pref{item:kernel of xi} implies that $\Ker(\xi)$ is \upg, and Proposition~\ref{PropUPG} then implies that $\Ker(\xi)$ is linear, abelian and finitely generated. 

For $m=1,\ldots,M$ let $p_m$ be the maximal length of a chain $\Lambda_{\theta_1} \prec \Lambda_{\theta_2} \prec \ldots \prec \Lambda_{\phi_m}$ in the partial order of (Notation~\ref{PartialOrder}) that ends in $\Lambda_{\phi_m}$, where $\Lambda_{\theta_1}, \Lambda_{\theta_2}, \ldots \in \L(F_n)$. By Lemma~\ref{LemmaUniformBound} we have $p_m \le P$ for some fixed $P$. Define the complexity $cx=cx^a$ to be the sequence of $p_m$'s rewritten in non-decreasing order. Since $M \le I \le 2n-1$, the complexity $cx$ is an element of a finite set depending only on~$n$ and $P$, namely the nondecreasing sequences of length~$\le 2n-1$ with entries in $\{1,2,\ldots,P\}$. Order this set lexicographically.

Consider the next finitely generated subgroup $A' = A^{a+1} = \<\psi_1,\ldots,\psi_a,\psi_{a+1}\>$. Choose $\emptyset = \F'_0 \sqsubset \F'_1 \sqsubset \cdots \sqsubset \F'_J = \{[F_n]\}$ to be a maximal $A'$-invariant filtration by free factor systems, one of which is $\F_\agen$, and let $ 1 \le j_1 < \cdots < j_{L} \le J$ be the indices $j$ for which $\F'_{j-1} \sqsubset \F'_j$ is a multi-edge extension. Applying Theorem D of \SubgroupsZero\ as above, for $l = 1,\ldots,L$ choose $\phi'_l \in A'$ such that $\phi'_l$ is irreducible rel $\F'_{j_l-1} \sqsubset \F'_{j_l}$, and with the following additional property: if there exists $m \in \{1,\ldots, M\}$ such that $\phi_m$ is irreducible rel $\F'_{j_l-1} \sqsubset \F'_{j_l}$ and such that $\Lambda_{\phi_m}$ is the element of $\L(\phi_m) $ corresponding to the extension $\F'_{j_l-1} \sqsubset \F'_{j_l}$ then $\phi'_l = \phi_m$. Define~$\Lambda_{\phi'_l}$, 
$\xi'_l$, $\xi'$, $p'_l$, and $cx'=cx^{a+1}$ as above.

What happens in effect is that this procedure ends after a finite number of iterations, although this conclusion only comes after the fact, once we have proved by other means that $K$ is finitely generated.

%
%
%
Lemma~\ref{LemmaMultiedge}~\pref{item:carried by a surface} (together with choice of $\phi'_l$ above) implies that each time the construction is repeated as above, for each $m \in \{1,\ldots, M\}$ there exists $l \in \{1,\ldots, L\}$ such that either $\Lambda_{\phi_m} = \Lambda_{\phi'_l}$ or $\Lambda_{\phi_m} \prec \Lambda_{\phi'_l}$; 
in the former case $p_m = p'_l$ and in the latter case $p_m < p'_l$. It follows that $cx$ is less than or equal to $cx'$ in lexicographical order, with equality holding only if $M = L$ and for all $1 \le m \le M$ we have $(\Lambda_{\phi_m},\phi_m) = (\Lambda_{\phi'_m},\phi'_m)$. Since the set of complexities is finite, the complexity sequence $cx^1, cx^2, cx^3, \ldots$ is eventually constant. It follows that the subset of $\L(F_n)$ given by $\{(\Lambda_{\phi_m},\phi_m) \suchthat 1 \le m \le M\}$ is eventually constant, and so by Lemma~\ref{LemmaFIX}\pref{ItemGammaEtaANA} and Lemma~\ref{FactSurfaceUnique} it also follows that the surface systems $\mu_m \from \pi_1 S_m \to F_n$ associated to the $(\Lambda_{\phi_m},\phi_m)$ are eventually constant, and hence the group $\mcg(S_1) \times \cdots \times \mcg(S_M)$ is eventually constant. The (eventually defined) sequence of homomorphisms to this group from the groups $A^1 \subgroup A^2 \subgroup \cdots$ are (eventually) consistent with the inclusions, and hence these homomorphisms fit together to define a homomorphism $ K \mapsto \mcg(S_{1}) \times \cdots \times \mcg(S_{M})$ whose kernel is \upg\ and hence, by Proposition~\ref{PropUPG}, is finitely generated, linear and abelian. By Lemma~\ref{LemmaMultiedge}~\pref{item:despinning}, the image of this homomorphism has finite index, completing the proof of Theorem~\ref{ThmFinitelyGenerated}. \qed

\begin{corollary} \label{CorollaryOneEdge} If $\F$ is a proper $K$-invariant free factor system and $K$ is fully irreducible relative to the extension $\F \sqsubset \{[F_n]\}$ then $\F \sqsubset \{[F_n]\}$ is a one edge extension.
\end{corollary}

\begin{proof} If $\F_{k-1} \sqsubset \F_k = \{[F_n]\}$ is a multi-edge extension then, since $K$ is finitely generated, we may apply Theorem D of \SubgroupsZero\ to produce an element $\theta \in K$ which is fully irreducible relative to the extension $\F_{k-1} \sqsubset \F_k$. Choosing a \ct\ representing a rotationless power of $\theta$ in which $\F_\agen$ and $\F_{k-1}$ are represented by filtration elements, the highest stratum of that \ct\ must be \eg, contradicting Lemma~\ref{LemmaFIX}. It follows that $\F_{k-1} \sqsubset \F_k = \{[F_n]\}$ is a one-edge extension.
\end{proof}


 
\begin{corollary}\label{CorGeomModelFeatures} Suppose that $\phi \in K$, that $\Lambda^+_\phi \in \L(\phi)$ and that $[\pi_1(S)]$ is the surface subgroup associated to $\theta$ and $\Lambda^+_\phi$. Then each generic leaf $\gamma_\phi$ of $\Lambda^+_\phi$ is carried by $\A_\na(\Lambda^+_\eta)$ and $[\pi_1(S)] \sqsubset [\A_\na(\Lambda^+_\eta)]$. 
 \end{corollary}
 
 \begin{proof} Choose a maximal $K$-invariant filtration $\emptyset = \F_0 \sqsubset \F_1 \sqsubset \cdots \sqsubset \F_k = \{[F_n]\}$ by free factor systems, one of which is $\F_\agen$. Corollary~\ref{CorollaryOneEdge} implies that $\F_{k-1} \sqsubset \F_k = \{[F_n]\}$ is a one-edge extension. Each $\theta \in K$ has a rotationless iterate that is represented by a \ct\ preserving $\emptyset = \F_0 \sqsubset \F_1 \sqsubset \cdots \sqsubset \F_k = \{[F_n]\}$. It follows that each generic leaf $\gamma_\theta$ of each element of $\L(\theta)$ is carried by $\F_{k-1}$. Applying this to $\phi^{\eta^i} \in K$ for each $i \in \Z$ and noting that $\eta^i(\gamma_\phi)$ is a generic leaf of $\phi^{\eta^i}$, we see that the $\eta$ orbit of $\gamma^+_\phi $ is carried by $\F_{k-1}$. Since $\F_{k-1}$ does not contain any generic leaf of $\Lambda^+_\eta$, it follows that $\gamma^+_\phi$ is not weakly attracted to $\Lambda^+_\eta$. For the same reason, $\gamma_\phi$ is not weakly attracted to $\Lambda^-_\eta$. \SubgroupsOne\ Theorem H therefore implies that $\gamma_\phi$ is carried by $\A_\na(\Lambda^+_\eta)$. 

 By \SubgroupsOne\ Proposition~2.15 (4), the free factor supports $\F_{supp}(\gamma_\phi)$ and $\F_{supp}([\pi_1(S)])$ are equal. We can therefore complete the proof by showing that $\F_{supp}(\gamma_\phi)\sqsubset \A_\na(\Lambda^+_\eta)$. This is evident if $\A_\na(\Lambda^+_\eta)$ is a free factor system or equivalently if $\Lambda^+_\phi$ is non-geometric. If $\Lambda^+_\phi$ is geometric then by \SubgroupsThree\ \lq Remark: The case of the top stratum\rq\ there is a rank one component $[\langle c \rangle]$ of $\A_\na(\Lambda^+_\eta)$ such that $\A_\na(\Lambda^+_\eta) - [\langle c \rangle]$ is a free factor system. Since $\gamma_\phi$ is not carried by $[\langle c \rangle]$, it is carried by $\A_\na(\Lambda^+_\eta) - [\langle c \rangle]$ and hence
 $$ \F_{supp}(\gamma_\phi)\sqsubset \A_\na(\Lambda^+_\eta) - [\langle c \rangle] \sqsubset \A_\na(\Lambda^+_\eta) $$
 \end{proof}

\section{Completion of the proof of Theorem~\ref{disjoint axes}} 
\label{SectionProofCompletion}


First we prove conclusions~\pref{ItemCoAxial} and~\pref{ItemIndependent} of Theorem~\ref{disjoint axes} which together say that if $\phi$, $\psi$ act loxodromically on $\FS(F_n)$ and have filling lamination pairs $\Lambda^\pm_\phi$, $\Lambda^\pm_\psi$, then either $\{\Lambda^\pm_\phi\} = \{\Lambda^\pm_\psi\}$ and $\{\bdy_\pm \phi\} = \{\bdy_\pm\psi\}$, or $\{\Lambda^\pm_\phi\} \intersect \{\Lambda^\pm_\psi\} = \emptyset$ and $\{\bdy_\pm \phi\} \intersect \{\bdy_\pm\psi\} = \emptyset$. 

If $\Lambda^+_\phi \ne \Lambda^+_\psi$ then by Corollary~\ref{distinct ends} it follows that $\bdy_- \phi \ne \bdy_- \psi$. By inverting $\phi$ and/or $\psi$ as needed it then follows that if $\{\Lambda^\pm_\phi\} \intersect \{\Lambda^\pm_\psi\} = \emptyset$ then $\{\bdy_\pm\phi\} \intersect \{\bdy_\pm\psi\} = \emptyset$.

It remains to show that if $\Lambda^+_\phi = \Lambda^+_\psi \equiv \Lambda^+$ then $\bdy_- \phi = \bdy_- \psi$ and $\bdy_+ \phi = \bdy_+ \psi$, for by applying Corollary~\ref{distinct ends} to $\phi^\inv$ and $\psi^\inv$ it then follows that $\Lambda^-_\phi = \Lambda^-_\psi$. By \BookOne\ Corollary 3.3.1 the expansion factor homomorphism has infinite cyclic image, and so after replacing $\phi$ and $\psi$ by iterates we may assume that $\PF_{\Lambda^+}(\phi) = \PF_{\Lambda^+} (\psi)$. The kernel $K$ of $\PF_{\Lambda^+} \from \Stab(\Lambda^+) \cap \IAThree \to \R$ is finitely generated by Theorem~\ref{ThmFinitelyGenerated}. 
Let $\F_\agen=\F_\agen(\Lambda^+)$ be the free factor support of the ageneric sublamination of $\Lambda^+$, as defined in Section~\ref{SectionBackground}, and note that from the definition we evidently have $K \subgroup \Stab(\Lambda^+) \subgroup \Stab(\F_\agen(\Lambda^+))$. Choose a maximal $K$-invariant filtration $\emptyset = \F_0 \sqsubset \F_1 \sqsubset \cdots \sqsubset \F_k = \{[F_n]\}$ by free factor systems, one of which is $\F_\agen$, and so $\F_\agen \sqsubset \F_{k-1}$.


Corollary~\ref{CorollaryOneEdge} implies that $\F_{k-1} \sqsubset \F_k = \{[F_n]\}$ is a one-edge extension.
Choose a marked graph $G$ with a subgraph $H$ such that $[H] = \F_{k-1}$ and such that $G \setminus H$ is a single edge $E$. Since $[H]$ is $K$-invariant, each $\phi \in K$ is represented by a homotopy equivalence $h:(G,H) \to (G,H)$ such that $h(E) = \bar v E u$ for some paths $u,v \subset H$. Thus the one-edge free splitting $\<G,H\>$ is $K$-invariant. Since $K^\phi = K$, it follows that $\<G,H\>^{\phi^n}$ is $K$-invariant for each $n \in \Z$. Applying this to $\phi^{-n}\psi^n \in K$ we have $\<G,H\>^{\phi^n} = \<G,H\>^{\phi^n(\phi^{-n}\psi^n)} = \<G,H\>^{\psi^n}$. Thus the actions of $\phi$ and $\psi$ on $\fscn$ have a common orbit, and so in the Gromov boundary of $\FS(F_n)$ the forward limit point of this orbit is $\bdy_+ \phi = \bdy_+ \psi$ and its backward limit point is $\bdy_- \phi = \bdy_- \psi$.

\medskip

Next we prove conclusions~\pref{ItemLamRepCorrespond} and~\pref{ItemLamRepEquivariant} of Theorem~\ref{disjoint axes} which are concerned with the relation between laminations $\Lambda \subset \B$ and points $\beta \in \bdy \FS(F_n)$ defined by $\Lambda \leftrightarrow \beta$ if and only if there exists $\phi \in \Out(F_n)$ acting loxodromically on $\FS(F_n)$ with filling attracting lamination $\Lambda=\Lambda^+_\phi$ and with repelling point $\beta = \bdy_-\phi$. This relation satisfies the equivariance conclusion~\pref{ItemLamRepEquivariant} because if $\theta \in \Out(F_n)$ then $\theta^\inv \phi \theta$ acts loxodromically, and clearly we have $(\bdy_- \phi)^\theta = \bdy_-(\theta^\inv \phi \theta)$ and $\Lambda^+_{\theta^\inv \phi \theta} = \theta^\inv(\Lambda^+_\phi)$ which fills~$F_n$. Conclusion~\pref{ItemLamRepCorrespond} requires that this relation is a bijection between filling attracting laminations and loxodromic repelling points, and this is so because as we have seen just above, for any $\phi,\psi \in \Out(F_n)$ acting loxodromically we have $\Lambda^+_\phi = \Lambda^+_\psi$ if and only if $\bdy_-\phi = \bdy_-\psi$.

\bibliographystyle{amsalpha} 
\bibliography{mosher} 

\def\cprime{$'$} \def\cprime{$'$}
\providecommand{\bysame}{\leavevmode\hbox to3em{\hrulefill}\thinspace}
\providecommand{\MR}{\relax\ifhmode\unskip\space\fi MR }
\providecommand{\MRhref}[2]{%
  \href{http://www.ams.org/mathscinet-getitem?mr=#1}{#2}
}
\providecommand{\href}[2]{#2}
\begin{thebibliography}{HM13d}

\bibitem[AK11]{AlgomKfir:Contracting}
Yael Algom-Kfir, \emph{Strongly contracting geodesics in outer space}, Geom.
  Topol. \textbf{15} (2011), no.~4, 2181--2233.

\bibitem[BBF15]{BBF:MCGquasitrees}
M.~Bestvina, K.~Bromberg, and K.~Fujiwara, \emph{Constructing group actions on
  quasi-trees and applications to mapping class groups}, Publ. Math. Inst.
  Hautes \'Etudes Sci. \textbf{122} (2015), no.~1--64.

\bibitem[BF02]{BestvinaFujiwara:bounded}
M.~Bestvina and K.~Fujiwara, \emph{Bounded cohomology of subgroups of mapping
  class groups}, Geom. Topol. \textbf{6} (2002), 69--89 (electronic).

\bibitem[BF14]{BestvinaFeighn:FFCHyp}
M.~Bestvina and M.~Feighn, \emph{Hyperbolicity of the complex of free factors},
  Adv. Math. \textbf{256} (2014), 104--155.

\bibitem[BFH97]{BFH:laminations}
M.~Bestvina, M.~Feighn, and M.~Handel, \emph{Laminations, trees, and
  irreducible automorphisms of free groups}, Geom. Funct. Anal. \textbf{7}
  (1997), 215--244.

\bibitem[BFH00]{BFH:TitsOne}
\bysame, \emph{{The Tits alternative for ${\rm Out}(F\sb n)$. I. Dynamics of
  exponentially-growing automorphisms.}}, Ann. of Math. \textbf{151} (2000),
  no.~2, 517--623.

\bibitem[BFH04]{BFH:Solvable}
\bysame, \emph{Solvable subgroups of {${\rm Out}(F\sb n)$} are virtually
  {A}belian}, Geometriae Dedicata \textbf{104} (2004), 71--96.

\bibitem[BFH05]{BFH:TitsTwo}
\bysame, \emph{The {T}its alternative for {${\rm Out}(F\sb n)$}. {II}. {A}
  {K}olchin type theorem}, Ann. of Math. \textbf{161} (2005), no.~1, 1--59.

\bibitem[BH92]{BestvinaHandel:tt}
M.~Bestvina and M.~Handel, \emph{Train tracks and automorphisms of free
  groups}, Ann. of Math. \textbf{135} (1992), 1--51.

\bibitem[BL94]{BassLubotzky:LinearCentral}
H.~Bass and A.~Lubotzky, \emph{Linear-central filtrations on groups}, The
  mathematical legacy of Wilhelm Magnus: groups, geometry and special functions
  (Brooklyn, NY, 1992), Cont. Math., vol. 169, AMS, 1994.

\bibitem[Bow08]{Bowditch:tight}
B.~Bowditch, \emph{Tight geodesics in the curve complex}, Invent. Math.
  \textbf{171} (2008), no.~2, 281--300.

\bibitem[CV91]{CullerVogtmann:RankTwo}
M.~Culler and K.~Vogtmann, \emph{The boundary of outer space in rank two},
  Arboreal group theory ({B}erkeley, {CA}, 1988), Math. Sci. Res. Inst. Publ.,
  vol.~19, Springer, New York, 1991, pp.~189--230.

\bibitem[FH11]{FeighnHandel:recognition}
M.~Feighn and H.~Handel, \emph{{The recognition theorem for
  $\text{Out}(F_n)$}}, Groups Geom. Dyn. \textbf{5} (2011), 39--106.

\bibitem[FM12]{FarbMargalit:primer}
B.~Farb and D.~Margalit, \emph{A primer on mapping class groups}, Princeton
  Mathematical Series, vol.~49, Princeton University Press, 2012.

\bibitem[Gro87]{Gromov:hyperbolic}
M.~Gromov, \emph{Hyperbolic groups}, Essays in group theory (S.~Gersten, ed.),
  MSRI Publications, vol.~8, Springer, 1987.

\bibitem[HM10]{HandelMosher:distortion_v1}
M.~Handel and L.~Mosher, \emph{{Lipschitz retraction and distortion for
  subgroups of $\mathsf{Out}(F\sb n)$}}, arXiv:1009.5018v1, 2010.

\bibitem[HM13a]{HandelMosher:distortion}
\bysame, \emph{{Lipschitz retraction and distortion for subgroups of
  $\mathsf{Out}(F\sb n)$}}, Geom. Topol. \textbf{17} (2013), no.~3, 1535--1579.

\bibitem[HM13b]{HandelMosher:SubgroupsIntro}
\bysame, \emph{{Subgroup decomposition in $\mathsf{Out}(F_n)$: Introduction and
  Research Announcement}}, arXiv:1302.2681, 2013.

\bibitem[HM13c]{HandelMosher:SubgroupsI}
\bysame, \emph{{Subgroup decomposition in $\mathsf{Out}(F_n)$, Part I:
  Geometric models}}, arXiv:1302.2378, 2013.

\bibitem[HM13d]{HandelMosher:SubgroupsIII}
\bysame, \emph{{Subgroup decomposition in $\mathsf{Out}(F_n)$, Part III: Weak
  attraction theory}}, arXiv:1306.4712, 2013.

\bibitem[HM13e]{HandelMosher:SubgroupsIV}
\bysame, \emph{{Subgroup decomposition in $\mathsf{Out}(F_n)$, Part IV:
  Relatively irreducible subgroups}}, arXiv:1306.4711, 2013.

\bibitem[HM13f]{HandelMosher:FreeSplittingHyperbolic}
\bysame, \emph{{The free splitting complex of a free group I: Hyperbolicity}},
  Geom. Topol. \textbf{17} (2013), 1581--1670.

\bibitem[HM15]{HandelMosher:BddCohomologyI}
\bysame, \emph{{Hyperbolic actions and 2nd bounded cohomology of subgroups of
  $\mathsf{Out}(F_n)$ Part I: Infinite lamination subgroups}},
  arXiv:1511.06913, Nov. 2015.

\bibitem[HM16]{HandelMosher:BddCohomologyII}
\bysame, \emph{{Hyperbolic actions and 2nd bounded cohomology of subgroups of
  $\mathsf{Out}(F_n)$ Part II: Finite lamination subgroups}}, In preparation,
  2016.

\bibitem[Man05]{Manning:pseudocharacters}
J.~Manning, \emph{Geometry of pseudocharacters}, Geometry and Topology
  \textbf{9} (2005), 1147--1185.

\bibitem[MM99]{MasurMinsky:complex1}
H.~Masur and Y.~Minsky, \emph{{Geometry of the complex of curves, I.
  Hyperbolicity}}, Invent. Math. \textbf{138} (1999), no.~1, 103--149.

\bibitem[Ser80]{Serre:trees}
J.~P. Serre, \emph{Trees}, Springer, New York, 1980.

\bibitem[SW79]{ScottWall}
P.~Scott and C.~T.~C. Wall, \emph{Topological methods in group theory},
  Homological group theory, Proceedings of Durham symposium, Sept. 1977, London
  Math. Soc. Lecture Notes, vol.~36, 1979, pp.~137--203.

\bibitem[Vog02]{Vogtmann:OuterSpaceSurvey}
K.~Vogtmann, \emph{Automorphisms of free groups and outer space}, Proceedings
  of the Conference on Geometric and Combinatorial Group Theory, Part I (Haifa,
  2000), vol.~94, 2002, pp.~1--31.

\end{thebibliography}

\end{document}